\newtheorem{Theorem}{Theorem}
\newtheorem{Lemma}{Lemma}
\newtheorem{Corollary}{Corollary}
\newcommand{\im}{\operatorname{im}}
\newcommand{\Eam}{\Omega}
\newcommand{\inte}{{\mathrm{int}}\,}
\newcommand{\rd}{{\mathbb R}^d}
\newcommand{\R}{{\mathbb R}}
\newcommand{\N}{{\mathbb N}}
\newcommand{\BK}{{\mathbb K}}
\newcommand{\cal}{\mathcal}
\newcommand{\ep}{\varepsilon}
\newcommand{\Lin}{{\rm Lin}}
\newcommand{\Tan}{{\rm Tan}}
\newcommand{\nor}{{\rm nor}}
\newcommand{\Forth}{F^{\perp}}
\newcommand{\cI}{{\mathcal I}}
\newcommand{\bark}{d-1-k}
\newcommand{\ve}{\varepsilon}
\newcommand{\bK}{{\mathbb K}}
\newcommand{\Ha}{{\cal H}}
\newcommand{\ap}{\operatorname{ap}}
\newcommand{\Nor}{{\mathrm{Nor}}\,}
\newcommand{\ul}{\underline}
\DeclareMathOperator*{\bigtimes}{\mbox{\LARGE{$\times$}}}
\title{Flag representations of mixed volumes and mixed functionals of convex bodies}
\author{Daniel Hug}
\thanks{The first and third authors are grateful for support from the German Science Foundation (DFG). The second author was supported by the Czech Science Foundation, project No. P201/15-08218S}
\address{Karls\-ruhe Institute of Technology, Department of Mathematics,
D-76128 Karls\-ruhe, Germany}
\email{daniel.hug@kit.edu}
\urladdr{http://www.math.kit.edu/$\sim$hug/}
\author{Jan Rataj}
\address{Charles University, Faculty of Mathematics and Physics,
Sokolovska 83, 186 75 Praha 8,
Czech Republic}
\email{rataj@karlin.mff.cuni.cz}
\urladdr{http://www.karlin.mff.cuni.cz/$\sim$rataj/index\underline{ }en.html}
\author{Wolfgang Weil}
\address{Karls\-ruhe Institute of Technology, Department of Mathematics,
D-76128 Karls\-ruhe, Germany}
\email{wolfgang.weil@kit.edu}
\urladdr{http://www.math.kit.edu/$\sim$weil/}
\date{September 19, 2017}
\subjclass[2010]{52A20, 52A22, 52A39, 53C65}
\keywords{Mixed volumes, mixed functionals, curvature measures, flag measures, Grassmannian, integral geometry, generalized curvatures.}
\begin{document}

\begin{abstract}
Mixed volumes $V(K_1,\dots, K_d)$ of convex bodies $K_1,\dots ,K_d$ in Euclidean space $\mathbb{R}^d$ are of central importance in the Brunn-Minkowski theory.
Representations for  mixed volumes are available in special cases, for example
as integrals over the unit sphere with respect to mixed area measures. More generally, in Hug-Rataj-Weil (2013) a formula for $V(K [n], M[d-n])$, $n\in \{1,\dots ,d-1\}$, as a double integral over flag manifolds was established which involved certain flag measures of the convex bodies $K$ and $M$ (and required a general position of the bodies). In the following, we discuss the general case $V(K_1[n_1],\dots , K_k[n_k])$, $n_1+\cdots +n_k=d$, and show a corresponding result involving the flag measures $\Omega_{n_1}(K_1;\cdot),\dots, \Omega_{n_k}(K_k;\cdot)$. For this purpose, we first establish a curvature representation of mixed volumes over the normal bundles of the bodies involved.

We also obtain a corresponding flag representation for the mixed functionals from translative integral geometry and a local version, for mixed (translative) curvature measures.
\end{abstract}

\maketitle
\markright{FLAG REPRESENTATIONS OF MIXED FUNCTIONALS}







\section{Introduction}\label{1}

Mixed volumes of convex bodies  build a basic concept and tool
in the Brunn-Minkowski theory of convex geometry. They arise by combining two fundamental geometric notions, the Minkowski addition of sets and the volume functional $V_d$. Namely, for
convex bodies $K_1,\dots ,K_k$ (non-empty compact convex sets) in $\R^d, d\ge 2$, and numbers $t_1,\dots ,t_k\ge 0,$ the volume of the linear combination $t_1K_1+\cdots +t_kK_k$ (which is again a convex body) is a (homogeneous) polynomial in $t_1,\dots ,t_k$, that is
\begin{equation}\label{polexp}
V_d(t_1K_1+\cdots +t_kK_k) = \sum_{i_1=1}^k\cdots\sum_{i_d=1}^k t_{i_1}\cdots t_{i_d}V(K_{i_1},\dots,K_{i_d}).
\end{equation}
The coefficients $V(K_{i_1},\dots,K_{i_d})$ are assumed to be symmetric and are therefore uniquely determined. Moreover, $V(K_{i_1},\dots,K_{i_d})$ is linear in each of its entries $K_{i_1},\dots,K_{i_d}$. For further basic properties of mixed volumes and all other notions from convex geometry which we use, we refer to the book \cite{S14}. As usual, we abbreviate by $V(K_1[n_1],\dots , K_k[n_k])$ the mixed volume where the body $K_i$ appears $n_i$ times, for $i=1,\ldots,k$, and $n_1+\cdots +n_k=d$. The functional $V(K_1[n_1],\dots , K_k[n_k])$ is homogeneous of degree $n_i$ in $K_i$.

In \cite{HRW}, it was shown that
\begin{eqnarray} \label{mv4}
&&V(K[n],M[d-n])\nonumber\\
&&\qquad\qquad \,=\iint f_{n,d-n}(u,U,v,V)\, \Eam_n(K;d(u,U))\,\Eam_{d-n}(M;d(v,V)) ,
\end{eqnarray}
where $\Eam_n(K;\cdot)$ and $\Eam_{d-n}(M;\cdot)$ are flag measures of $K$ and $M$, respectively, the function $f_{n,d-n}$ is independent of $K$ and $M$, and the integration is over the manifold of flags $(u,U)$  (respectively $(v,V)$).
 For this formula,  we had to assume
 that $K$ and $M$ are in general relative position with respect to each other. If $K$ and $M$ are
 polytopes, this condition is, for instance, satisfied if $K$ and $M$ do not have parallel faces of complementary
 dimension.  The proof of \eqref{mv4} was based on a curvature representation of mixed functionals from translative integral geometry which was proved in \cite{RZ95} and used the fact that the mixed volume $V(K[n],-M[d-n])$
 and the mixed functional $V_{n,d-n}(K,M)$ from translative integral geometry coincide (up to a binomial coefficient).

 The iteration of translative integral formulas yields an expansion which resembles \eqref{polexp} but involves mixed functionals of a different nature. Namely,
\begin{align}\label{transk}
\int_{\rd}\cdots\int_{\rd}&V_j(K_1\cap (K_2+z_2)\cap\dots\cap(K_k+z_k))\, {\cal H}^d(dz_k)\cdots{\cal H}^d(dz_2)\nonumber\\ &=\sum_{\substack{r_1,\dots, r_k=j\\r_1+\dots +r_k=(k-1)d+j}}^{d}V_{r_1,\dots,r_k}(K_1,\dots ,K_k)
\end{align}
for $j=0,\dots ,d$, where ${\cal H}^d$ denotes the $d$-dimensional Hausdorff measure. Translative integral formulas are at the basis of integral geometry and have important applications in stochastic geometry. We refer to
\cite[Section 6.4 and Chapter 9]{SW}, for background information, and for details of such applications and for further references. Since $j$ is determined by $j=r_1+\dots +r_k-(k-1)d$,  we skipped the upper index $(j)$ which was used in \cite{SW} and previous papers for the mixed functionals on the right-hand side of \eqref{transk}. We remark that $V_{r_1,\dots,r_k}(K_1,\dots ,K_k)$ is symmetric in the bodies involved, as long as $K_1,\dots, K_k$ and $r_1,\dots ,r_k$ undergo the same permutation. Moreover, if $r_i=0$ (hence $j=0$), then the mixed functional $V_{r_1,\dots,r_k}(K_1,\dots ,K_k)$ does not depend on $K_i$, and if $r_k=d$, then
$$
V_{r_1,\dots,r_{k}}(K_1,\dots ,K_k) = V_{r_1,\dots,r_{k-1}}(K_1,\dots ,K_{k-1})V_d(K_k).
$$
Hence, we may concentrate on the cases where $1\le r_1,\dots ,r_k\le d-1$.
Since $V_{r_1,\dots,r_k}(K_1,\dots ,K_k)$ is homogeneous of degree $r_i$ in $K_i$, $i=1,\dots k$, the total degree of the mixed functional is $r_1+\dots +r_k= (k-1)d+j$. Therefore, for $k>2$ or for $k=2$ and $j>0$, the mixed volume $V(K_1[n_1],\dots , K_k[n_k])$ and the mixed functional $V_{r_1,\dots ,r_k}(K_1,\dots, K_k)$ have completely different homogeneity properties. In fact, apart from the case $k=2$ mentioned above, no simple connection between mixed volumes and mixed translative functionals is known. As a consequence, the curvature representation for mixed translative functionals, which was established in \cite{Hug} (see also \cite{HR}) cannot be used directly for the mixed volume $V(K_1[n_1],\dots , K_k[n_k])$. It is our first goal to provide such a result for mixed volumes.

After collecting some basic facts from convex geometry in Section \ref{2}, we will derive this curvature representation in Section \ref{3} (based on results from \cite{Hug}).  In Section \ref{4}, we discuss the special case of polytopes and relate the curvature representation of mixed volumes  to a formula of Schneider \cite{S94}. Our main result, the flag representation of mixed volumes is formulated and proved in Section \ref{5}. The next Section \ref{6} contains a corresponding flag representation of the mixed translative functionals. In the final Section \ref{7} we discuss a local version of the latter result.

The flag representations of mixed volumes and mixed functionals are useful for applications in stochastic geometry. In particular, for stationary non-isotropic Boolean models $Y$ in ${\mathbb R}^d$, it was recently shown in \cite{HW} that the specific mixed volumes of $Y$ (mean values with respect to convex test bodies $K$) determine the intensity of the underlying particle system uniquely. The proof makes use of our integral representations and shows that even the specific flag measures of the particles are determined.

\section{Basic facts}\label{2}
Let $\rd$ be the $d$-dimensional Euclidean space with scalar product $\langle \cdot\, ,\cdot\rangle$ and norm $\|\cdot\|$. The unit ball
and the unit sphere of $\rd$ are denoted by $B^d$ and $S^{d-1}$, respectively. We put
$$
S_+^{d-1}:=\{ u=(u^1,\dots, u^d)\in S^{d-1} : u^i>0\} .
$$
For $x\in\rd$ and a linear subspace $U\subset\rd$, let $U^\perp$ denote the
orthogonal complement of $U$, $p_Ux=x|U$ the orthogonal projection of $x$ onto $U$, and $p_UA=A|U$
the orthogonal projection of a set $A\subset\rd$ onto $U$. Moreover, we write $\partial(A|U)$ for the
topological boundary of $A|U$ with respect to $U$ as the ambient space.

For a given $k\in\{0,\ldots,d\}$, we denote by  $\bigwedge_k\rd$
the $\binom{d}{k}$-dimensional linear space of $k$-vectors in $\rd$. As usual, we identify $\bigwedge_0\rd$ with $\R$.
The vector space  $\bigwedge_k\rd$ is equipped with the scalar product $\langle\cdot\, ,\cdot\rangle$ as described
in \cite[\S 1.7.5]{Federer69}. We refer to \cite{HRW} for further details and notions which we shall use in the following
and in particular to \cite[Chapter 1]{Federer69} for a brief introduction to multilinear algebra.

The $j$-dimensional Hausdorff measure in a metric space
will be denoted by $\mathcal{H}^j$ with the same normalization as in \cite[\S 2.10.2, p.~171]{Federer69}.
Let $\nu_k^d$ denote the $O(d)$ invariant measure on the Grassmannian of $k$-dimensional linear subspaces $G(d,k)$
of $\R^d$, normalized to a probability measure. We put $\kappa_k := \mathcal{H}^k(B^k)$, $k\in\mathbb{N}_0$, and $\omega_k := k\kappa_k=\mathcal{H}^{k-1}(S^{k-1})$ for $k\in \N$.

In the following, we repeatedly make  use of notions and basic results of geometric measure theory such as the coarea formula which requires the notions of an approximate differential and of an approximate Jacobian. A general form of the coarea formula is for instance stated in \cite[Theorem 3.2.22]{Federer69}, approximate differentials are introduced in \cite[page 253]{Federer69} and the approximate Jacobian is defined in \cite[Theorem 3.2.22]{Federer69} (see also \cite{EG} and \cite{KP}).

Let ${\cal K}$ be the class of all convex bodies in ${\mathbb R}^d$. For  $K \in {\cal K}$ with boundary $\partial K$,  let
$$\nor (K):=\{ (x,u)\in\partial K\times S^{d-1}:\,\langle u  , y-x\rangle\leq 0\mbox{ for all } y\in K\}$$
be its unit normal bundle. This is a $(d-1)$-rectifiable set. Moreover, $\Nor(K,x)$ denotes the normal cone of $K$ at $x\in K$ (we have $\Nor(K,x)=\{0\}$, if $x\in \text{int}(K):= K\setminus \partial K$).

The $k$th {\it support measure} $\Xi_k(K;\cdot)$ of $K$ is a measure on $\R^d\times S^{d-1}$
which is concentrated on $\nor( K)$ and defined by
\begin{align}\label{suppmeas}
&\int g(x,u)\,\Xi_k(K;d(x,u))\nonumber\\
&\qquad =\frac 1{\omega_{d-k}}\int_{\nor (K)}g(x,u)\sum_{|I|=
d-1-k}
\BK_I(K;x,u)\,{\cal H}^{d-1}(d(x,u)),
\end{align}
where $g$ is any bounded measurable function on $\rd\times\ S^{d-1}$, $I$ denotes a subset of $\{ 1,\ldots ,d-1\}$ of cardinality $|I|$,
$$\BK_I(K;x,u):=\frac{\prod_{i\in I}k_i(K;x,u)}{\prod_{i=1}^{d-1}\sqrt{1+k_i(K;x,u)^2}},$$
and the numbers $k_i(K;x,u)\in [0,\infty]$ are the generalized principal curvatures of $K$ at $(x,u)\in\nor (K)$, $i=1,\ldots ,d-1$. If $k_i(K;x,u)=\infty$ for some $i\in\{1,\ldots,d-1\}$, then $\BK_I(K;x,u)$ is determined as the limit which is obtained as
$k_i(K;x,u)\to \infty$. In particular, this implies $\frac 1{\sqrt{1+\infty^2}}=0$ and $\frac \infty{\sqrt{1+\infty^2}}=1$. Moreover, a  product over an empty index set is considered as a factor one. The generalized principal curvatures are defined for ${\cal H}^{d-1}$-almost all $(x,u)\in \nor (K)$. We refer to \cite{Zaehle86}, \cite{Hug98} and \cite{S14} for background information and an introduction to these generalized curvatures and measures from the viewpoint of geometric measure theory.
We also use the notation
$$
A_I(K;x,u):=\Lin\{ a_i(K;x,u):\, i\in I\},
$$
where $a_i(K;x,u)\in S^{d-1}$, $i=1,\ldots ,d-1$, is a generalized principal direction of curvature of $K$ at $(x,u)$, corresponding to the generalized principal curvature $k_i(K;x,u)$, and the vectors $a_1(K;x,u), \dots, a_{d-1}(K;x,u)$ form an orthonormal basis of $u^\perp$ (the subspace orthogonal to $u$). Here, $\Lin$ denotes the linear hull. If $I=\emptyset$, then $A_I(K;x,u)=\{0\}$.   Sometimes it is convenient to consider $A_I(K;x,u)$ as a multivector  (cf.\ Section \ref{3}), i.e.\
$$
A_I(K;x,u)=\textstyle{\bigwedge}_{i\in I}a_i(K;x,u).
$$
Here, the right-hand side is $1\in\bigwedge_0\R^d$ if $I=\emptyset$.

The support measures $\Xi_k(K;\cdot)$ also arise as coefficients in a local Steiner formula (see \cite{S14}, for details). We later need the area measure $\Psi_k(K,\cdot)$ of $K$, which is the image of $\Xi_k(K;\cdot)$ under the projection $(x,u)\mapsto u$, and the total measure $V_k(K)=\Xi_k(K;\rd\times S^{d-1})$ which is the $k$th intrinsic volume of $K$. The image $\Phi_k(K;\cdot)$ of $\Xi_k(K;\cdot)$ under the other projection $(x,u)\mapsto x$ is usually called the $j$th curvature measure of $K$.

In the following, we prefer a different normalization of these measures, namely we put
$$
C_k(K,\cdot):=\frac{d\kappa_{d-k}}{\binom{d}{k}}\,\Xi_k(K,\cdot)
$$
and
$$
S_k(K,\cdot):=\frac{d\kappa_{d-k}}{\binom{d}{k}}\,\Psi_k(K,\cdot) .
$$
Thus, $S_k(K,\cdot)$ is the marginal measure on $S^{d-1}$ of $C_k(K,\cdot)$. Note that here we deviate from the notation used in \cite{S14}, where $C_k(K,\cdot)$ denotes the re-normalized curvature measure $\Phi_k(K;\cdot)$. Instead, we follow the paper \cite{HR}, and other publications in geometric measure theory, and call this re-normalized support measure the $k$th (generalized) curvature measure of $K$. It gives rise to the {\it mixed curvature measures}
\[C_{r_1,\ldots,r_k}(K_1,\ldots,K_k;\cdot)\,,\]
for  $r_1,\ldots,r_k\in\{0,\ldots,d\}$ with $(k-1)d\le r_1+\cdots+r_k\le kd-1$ and convex bodies $K_1,\ldots,K_k\subset\R^d$, which are finite Borel measures on $\R^{kd}\times S^{d-1}$, defined by a local version of \eqref{transk}, that is, if
$h:\R^{kd}\times S^{d-1}\to [0,\infty]$ is an arbitrary nonnegative, Borel measurable function, then
\begin{align}  \label{TIF}
&{\int_{\R^d}\ldots\int_{\R^d}\int
h(x,x-z_2,\ldots,x-z_k,u)\, C_j(\underline{K}(\underline{z});d(x,u))}\,dz_k\ldots dz_2\nonumber \\
&=\sum_{\substack{0\le r_1,\ldots,r_k\le d\\ r_1+\cdots+r_k=(k-1)d+j}}
\int h(x_1,\ldots,x_k,u)\,C_{r_1,\ldots,r_k}(K_1,\ldots,K_k;d(x_1,\ldots,x_k,u)),
\end{align}
for  $j\in\{0,\ldots,d-1\}$ and with  $\underline{K}(\underline{z}):=K_1\cap(K_2+z_2)\cap\cdots\cap(K_k+z_k)$. Formula \eqref{TIF} was proved in \cite{Hug}, see \cite{Rataj97,HR} for an extension to sets with positive reach
and further references. It generalizes the local iterated translation formula for the curvature measures  $\Phi_j(K,\cdot)$ in \cite{SW}. In fact, the mixed (translative) measures $\Phi^{(j)}_{r_1,\dots ,r_k}(K_1,\dots, K_k,\cdot)$ in \cite{SW} are (up to a constant) the images of $C_{r_1,\ldots,r_k}(K_1,\ldots,K_k;\cdot)$ under the projection $(z_1,\dots,z_k,u)\mapsto (z_1,\dots ,z_k)$.

Concerning flag measures of convex bodies, we refer to the survey \cite{HTW} for background information and to \cite{HRW} for the specific measures used here. In the following, we consider the {\it flag manifold}
$$\Forth(d,k):=\{ (u,V)\in S^{d-1}\times G(d,k):\, u\perp V\} ,$$
where $u\perp V$ means that $u$ is orthogonal to the linear subspace $V$. For a convex body $K\subset\rd$ and $k\in\{0,\dots ,d-1\}$, the $k$th {\it flag measure} $\Omega_k(K;\cdot)$ of $K$  is a measure on $\Forth(d,d-1-k)$ defined by
\begin{align*}
&\int g(u,V)\, \Eam_k(K;d(u,V))\\
&\qquad\qquad=\tilde\gamma(d,k)\int_{G(d,\bark)}
\int_{\partial (K|V^\perp)}\sum g(u,V)\,{\cal H}^k(dz)\,\nu_{\bark}^d(dV),
\end{align*}
where $g$ is a bounded measurable function on $F^\perp(d,d-1-k)$ and
the summation is extended over all  exterior unit normal vectors $u\in V^\perp\cap S^{d-1}$
 of $\partial(K|V^\perp)$ at $z$. If $K|V^\perp$ is $(k+1)$-dimensional, then  $u$ is uniquely determined,
for ${\cal H}^k$-almost all $z\in  \partial(K|V^\perp)$ (cf.~\cite[Theorem 2.2.5]{S14}). If $\dim(\partial(K|V^\perp))=k$, then
$u$ is unique up to the sign, for ${\cal H}^k$-almost all $z\in  \partial(K|V^\perp)$.
Finally, if $\dim(\partial(K|V^\perp))<k$, then the inner integral vanishes.
Thus, by \cite[pp.~220-221 and Theorem 4.2.3]{S14}, we obtain
\begin{eqnarray*}
&&\int g(u,V)\, \Eam_k(K;d(u,V))\\
&&\quad =\tilde\gamma(d,k)\int_{G(d,{k+1})}
\int_{S^{d-1}\cap U}g(u,U^\perp)\,S^U_k(K\vert U,du)\,\nu_{{k+1}}^d(dU),
\end{eqnarray*}
where $S_k^U(K\vert U,\cdot)$
is the $k$th area measure of the orthogonal projection of $K$ onto $U$,
with respect to $U$ as the ambient space. Note that this relation holds irrespective
of the dimension of $K|U$. The constant in the previous two formulas is given by
$$\tilde{\gamma}(d,k):=\frac 12 \binom{d-1}{ k}
\frac{\Gamma\left(\frac{d-k}2\right)\Gamma\left(\frac{k+1}2\right)} {\Gamma\left(\frac 12\right)\Gamma\left(\frac d2\right)} .$$

We will need another description of $\Omega_k(K;\cdot)$ for which we refer to a more general result in \cite{HRW} (see also \cite{HHW}, for the case of polytopes from which the general formula can be obtained by approximation),
\begin{eqnarray}
&&\int g(u,V)\, \Omega_k(K;d(u,V) )\nonumber\\
&&\qquad =\gamma(d,k)\int_{\nor (K)}\sum_{|I|=
\bark}\BK_I(K;x,u)\int_{G^{u^\perp}(d-1,d-1-k)} g(u,V)
\label{ECM}\\
&&\qquad \qquad\times\, \langle V,A_I(K;x,u)\rangle^2\,
\nu^{d-1}_{d-1-k}(dV)\,{\cal H}^{d-1}(d(x,u)), \nonumber
\end{eqnarray}
where
$$\gamma(d,k):=\frac{\binom{d-1}{k}}{\omega_{d-k}},$$
$G^{u^\perp}(d-1,j)$ is the Grassmannian of $j$-dimensional linear subspaces of $u^\perp$, and $\nu^{d-1}_{j}$ denotes the Haar probability measure on this space. In the scalar product $\langle V,A_I(K;x,u)\rangle^2$, we interpret $V$ and $A_I(K;x,u)$ as one of the two possible associated elements of the oriented Grassmannian. This representation is
similar to formula \eqref{suppmeas} for the support measures $\Xi_k(K;\cdot)$. The crucial difference is that for each $(x,u)$ in the normal bundle of $K$ and for each $I$, the flag measures involve
an additional averaging  of $g(u,V)\langle V,A_I(K;x,u)\rangle^2$ over the linear subspaces $V\in G^{u^\perp}(d-1,\bark)$; these averages are exactly the weights with which the  products $\BK_I(K;x,u)$ of generalized curvatures have to be multiplied.

From this representation it can be seen that the projection $(u,V)\mapsto u$ maps $\Omega_k(K;\cdot)$ to the $k$th area measure. In fact, we have
$$\Omega_k(K;\cdot\times G(d,{d-1-k}))=\binom{d}{k}(d\kappa_{d-k})^{-1}S_k(K,\cdot).$$

We remark that also the  flag measures $\Omega_k(K;\cdot )$  can be obtained, alternatively, as coefficients  in a Steiner formula for $K$ on the Grassmannian; see, for instance, \cite{HTW}.

We will later use the following simple fact (see \cite[Equation (15)]{HR}).

\begin{Lemma}  \label{fact}
If $L\in G(d,j)$, then $\int_{S^{d-1}}\|u|L\|^p\, \Ha^{d-1}(du)<\infty$ if and only if $p>-j$.
\end{Lemma}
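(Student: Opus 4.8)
The plan is to reduce the surface integral to a one-dimensional Beta-type integral by exploiting the rotational invariance of $\Ha^{d-1}$ together with the join structure of the sphere. Since $\Ha^{d-1}$ is $O(d)$-invariant and the integrand depends on $L$ only through the projection length $u\mapsto\|u|L\|$, I may assume without loss of generality that $L$ is the span of the first $j$ coordinate vectors, so that $L^\perp$ is spanned by the remaining $d-j$ of them. I would treat the principal case $1\le j\le d-1$; the extreme cases $j=d$ (where $\|u|L\|\equiv 1$, so the integral is the finite constant $\omega_d$) and $j=0$ are degenerate and handled separately.

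Next I would introduce spherical coordinates adapted to the splitting $\rd=L\oplus L^\perp$. Writing each $u\in S^{d-1}$ (outside the two lower-dimensional singular sets) uniquely as $u=\sin\theta\,\xi+\cos\theta\,\eta$ with $\theta\in[0,\pi/2]$, $\xi\in S^{d-1}\cap L$ and $\eta\in S^{d-1}\cap L^\perp$, one has $u|L=\sin\theta\,\xi$, hence $\|u|L\|=\sin\theta$. The associated decomposition of the surface measure is
\begin{equation*}
\Ha^{d-1}(du)=\sin^{j-1}\theta\,\cos^{d-j-1}\theta\;d\theta\,\Ha^{j-1}(d\xi)\,\Ha^{d-j-1}(d\eta),
\end{equation*}
the standard volume element of the join $S^{j-1}\ast S^{d-j-1}$, which I would justify by the coarea formula applied to this parametrization (or simply cite as known).

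With this substitution the integral factorizes. As the $\xi$- and $\eta$-integrations contribute only the finite constants $\Ha^{j-1}(S^{d-1}\cap L)=\omega_j$ and $\Ha^{d-j-1}(S^{d-1}\cap L^\perp)=\omega_{d-j}$, the convergence question reduces to that of
\begin{equation*}
\int_0^{\pi/2}\sin^{p+j-1}\theta\,\cos^{d-j-1}\theta\,d\theta .
\end{equation*}
This Beta integral is finite if and only if both exponents exceed $-1$, i.e.\ $p+j-1>-1$ and $d-j-1>-1$. The second inequality holds automatically because $j\le d-1$, so only the behaviour of $\sin^{p+j-1}\theta$ near $\theta=0$ is at issue, and finiteness is equivalent to $p>-j$, as claimed.

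The only genuinely delicate point is the verification of the weight $\sin^{j-1}\theta\,\cos^{d-j-1}\theta$ in the surface-measure decomposition; everything else is routine. Intuitively this weight reflects that the singular set $S^{d-1}\cap L^\perp=\{\,\|u|L\|=0\,\}$ has dimension $d-j-1$ and hence codimension $j$ in $S^{d-1}$; near it $\|u|L\|$ behaves like the distance $\theta$ to this set, so integrating $\theta^p$ against the radial factor $\theta^{j-1}\,d\theta$ of a tubular neighbourhood reproduces exactly the threshold $p>-j$.
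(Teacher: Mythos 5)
Your argument is correct and complete for $1\le j\le d-1$: the join parametrization $u=\sin\theta\,\xi+\cos\theta\,\eta$, the weight $\sin^{j-1}\theta\,\cos^{d-j-1}\theta$, and the reduction to a Beta integral with threshold $p+j-1>-1$ are all right, and since the integrand is nonnegative the coarea formula transfers divergence as well as convergence, so the ``only if'' direction comes for free from the factorization. For comparison: the paper itself gives no proof of Lemma~\ref{fact} --- it is quoted from \cite[Equation (15)]{HR} --- but your decomposition is precisely the device the authors use in the Remark following Theorem~\ref{mivol}, where the map $(x,y,t)\mapsto \sin(t)\,x+\cos(t)\,y$ on $(S^{p-1}\cap L)\times(S^{p-1}\cap L^\perp)\times(0,\pi/2)$ is stated to have approximate Jacobian $(\sin t)^{d-1}(\cos t)^{p-d-1}$; so your route coincides with the technique underlying the paper, merely made self-contained. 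One small caveat about your treatment of the extreme cases: for $j=d$ the integral equals $\omega_d$ for \emph{every} $p$, so the stated equivalence genuinely fails there (and $j=0$ is convention-dependent at $p=0$); thus ``handled separately'' should really read ``excluded'', i.e.\ the lemma must be understood with $1\le j\le d-1$, which is the only range in which the paper applies it (in the proof of Theorem~\ref{mainthm} it is used with ambient dimension $kd$, $j=(k-1)d$ and $p=2-(k-1)d$).
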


\section{Curvature representation of mixed volumes}\label{3}
As we noted in the introduction, for two convex bodies $K,L$ in $\rd$   the mixed volumes and the mixed translative functionals of $K$ and $L$ satisfy the relation
\begin{equation}\label{laterefs}
V_{n,d-n}(K,L)=\binom{d}{ n}V(K[n],-L[d-n]),
\end{equation}
for $n=1,\dots ,d-1$ (the cases $n=0$ and $n=d$ hold trivially).
For $V_{k,l}$ with $k+l=d$, the integral representation
\begin{align}
V_{k,l}(K,L)&=\int_{\nor (K)\times\nor (L)}F_{k,l}(\angle (u,v))\sum_{|I|=d-1-k}\sum_{|J|=d-1-l} \BK_I(K;x,u)\BK_J(L;y,v) \nonumber\\
&\quad\times \left\|A_I(K;x,u)\wedge u\wedge A_J(L;y,v)\wedge v\right\|^2\, {\cal H}^{2d-2}(d(x,u,y,v))
\label{IR}
\end{align}
has been proved in \cite[Theorem~2]{RZ95}.
Here, $F_{k,l}$ is a certain function of the angle $\angle(u,v)\in [0,\pi]$ between the unit vectors $u,v\in S^{d-1}$
and $A_I(K;x,u)$ and $A_J(L;y,v)$ are viewed as multivectors.
An important issue related to the use of the function $F_{k,l}$ is that it becomes unbounded as the angle approaches $\pi$ (that is, for $u$ near $-v$). One may define $F_{k,l}(\pi )=0$, say, since  for $u=-v$ we have
$\left\|A_I(K;x,u)\wedge u\wedge A_J(L;y,v)\wedge v\right\|=0$ in \eqref{IR}, but the unboundedness remains and this is the reason why the flag representation
requires certain restrictions on the relative position of the bodies involved (see, for instance, Theorem 2 in \cite{HRW}).

Of course, the representation \eqref{IR} yields a corresponding result for the mixed volume $V(K[k],-L[l])$. For the mixed translative functionals $V_{r_1,\dots ,r_k}(K_1,\dots, K_k)$, a representation generalizing \eqref{IR} has been  established in \cite{HR}, but as we explained in the introduction, this does not imply a corresponding formula for the mixed volume $V(K_1[n_1],\ldots,K_k[n_k])$. We now provide such a curvature representation of mixed volumes for general convex bodies.

For $k\ge 2$, let $K_1,\ldots,K_k\subset\R^d$ be convex bodies and let $k_{ij}=k_{ij}(x_i,u_i), a_{ij}=a_{ij}(x_i,u_i)$, $j=1,\dots,d-1$, be the principal curvatures and principal directions of curvature of $K_i$ at $(x_i,u_i)\in\nor (K_i)$, $i=1,\dots,k$. Given $n=(n_1,\dots,n_k)\in\{0,\dots,d-1\}^k$ with $n_1+\dots+n_k=d$ and $u_1,\dots,u_k\in S^{d-1}$, we put
$F_{n}(u_1,\dots,u_k):=0$ if $u_1=\dots =u_k$, and
\begin{align*}
&F_{n}(u_1,\dots,u_k)\\&\ :=\frac{k^{(k-2)\frac{d}{2}}}{\omega_{(k-1)d}}\int_{S^{k-1}_+}\left(\prod_{i=1}^kt_i^{d-1-n_i}\right)
\left( \sum_{1\leq i<j\leq k}\|t_iu_i-t_ju_j\|^2\right)^{-(k-1)\frac d2}\Ha^{k-1}(dt)
\end{align*}
otherwise.

\begin{Theorem}\label{mivol}
Let $k,d\geq 2$, $n=(n_1,\dots,n_k)\in\{0,\dots,d-1\}^k$ with $n_1+\dots+n_k=d$ and convex bodies $K_1,\dots,K_k\subset\R^d$ be given. Then
\begin{align*}
&\binom{d}{{n_1\dots n_k}} V(K_1[n_1],\dots,K_k[n_k])\\
&\qquad=\int_{\nor (K_1)\times\dots\times\nor (K_k)}F_{n}(u_1,\dots,u_k)\sum_{|I_1|=n_1,\dots,|I_k|=n_k}\left(\prod_{i=1}^k\bK_{I_i^c}(K_i;x_i,u_i)\right)\\
&\qquad\qquad\times\left\|\bigwedge_{i=1}^k A_{I_i}(K_i;x_i,u_i)\right\|^2\, \Ha^{k(d-1)}(d(x_1,u_1,\dots,x_k,u_k)),
\end{align*}
where the sum extends over all subsets $I_i\subset\{1,\dots,d-1\}$ of the prescribed cardinalities.
\end{Theorem}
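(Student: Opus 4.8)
The plan is to read the assertion off the polynomial expansion \eqref{polexp}. Since $\binom{d}{n_1\dots n_k}V(K_1[n_1],\dots,K_k[n_k])$ is the coefficient of $t_1^{n_1}\cdots t_k^{n_k}$ in $V_d(t_1K_1+\cdots+t_kK_k)$, I would represent the volume of the Minkowski combination by an integral living on the product $\nor(K_1)\times\dots\times\nor(K_k)$ and then extract this coefficient. The case $k=2$ is the model and the base case: combined with \eqref{laterefs} (after the sign change $K_2\mapsto -K_2$), formula \eqref{IR} of \cite{RZ95} already gives the claim, once $F_{(n_1,n_2)}$ is matched with the angle function $F_{k,l}$ of \eqref{IR} (with $k=n_1$, $l=n_2$) and the wedge $\|A_{I_1}\wedge A_{I_2}\|^2$ is rewritten, via Hodge duality inside each $u_i^\perp$ (the resulting scalar factors being absorbed into the kernel), against the complementary expression $\|A_{I_1^c}\wedge u_1\wedge A_{I_2^c}\wedge u_2\|^2$ occurring there. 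This also explains the shape of the integrand: the complementary index sets carry the curvature weights $\bK_{I_i^c}$, while the $n_i$-dimensional face frames $A_{I_i}$ carry none.

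The first substantial step is algebraic. On the twice differentiable part of each $\nor(K_i)$ I would pass from $\Ha^{d-1}$ to the spherical variable $u_i$; the Jacobian $\prod_j\sqrt{1+k_{ij}^2}$ combines with $\bK_{I_i^c}(K_i;x_i,u_i)$ to leave precisely $\prod_{j\in I_i}r_{ij}$, the product of reverse principal radii in the face directions indexed by $I_i$. Hence the curvature weight becomes $\sum_{|I_i|=n_i}\prod_{i}\bigl(\prod_{j\in I_i}r_{ij}\bigr)\,\|\bigwedge_i A_{I_i}\|^2$. This is the off-diagonal tensorization of the mixed discriminant $D(R_1[n_1],\dots,R_k[n_k])$ of the reverse Weingarten maps $R_i$, to which the curvature weight reduces in the transversal limit (the additivity $\nabla^2 h_{\sum t_iK_i}=\sum_i t_i\nabla^2 h_{K_i}$ of second-order support data being the source of the mixed discriminant). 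Because $\sum_i n_i=d>d-1$, the $d$-fold wedge degenerates on the exact diagonal $u_1=\dots=u_k$, so this mixed-discriminant content is reached only transversally, as the normals approach a common direction.

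The analytic core is to pass between the full product integral of the theorem and the $(d-1)$-dimensional common-normal (fiber-product) integral on which the mixed volume naturally lives, by a spherical Blaschke--Petkantschin/coarea formula in which the Minkowski weights $t=(t_1,\dots,t_k)$ become integration variables over $S^{k-1}_+$. The decisive Jacobian is computed with the help of the identity $\sum_{1\le i<j\le k}\|t_iu_i-t_ju_j\|^2=k\,\|P^\perp(t_1u_1,\dots,t_ku_k)\|^2$, where $P^\perp$ is the orthogonal projection of $(\R^d)^k$ onto the complement of the diagonal; the relevant coarea density is then $\|P^\perp(t_1u_1,\dots,t_ku_k)\|^{-(k-1)d}$, and integrating it against $\prod_i t_i^{d-1-n_i}$ over $S^{k-1}_+$ reproduces $F_n$ together with the constants $\omega_{(k-1)d}^{-1}$ and $k^{(k-2)d/2}$. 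In this passage the tensor $\|\bigwedge_i A_{I_i}\|^2$ appears as the square of the pertinent Gram/Jacobian determinant; since it vanishes to the correct order as $u_1,\dots,u_k$ tend to a common direction, it tempers the singularity of $F_n$ on the diagonal and selects exactly the transversal mixed-discriminant limit, just as $\|A_I\wedge u\wedge A_J\wedge v\|^2$ compensates the blow-up of the angle function in \eqref{IR}.

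I expect the main obstacle to be carrying out this coarea/Blaschke--Petkantschin step for \emph{general} convex bodies, where the curvatures $k_{ij}$ and directions $a_{ij}$ exist only $\Ha^{d-1}$-almost everywhere on the rectifiable sets $\nor(K_i)$ and the naive fiber product of the normal bundles may be ill-behaved. Here I would lean on the generalized-curvature coarea machinery of \cite{Hug}, which is designed for exactly such Jacobian computations over products of normal bundles, and on approximation of the $K_i$ by smooth or polytopal bodies (the polytopal model being Schneider's formula \cite{S94}, cf.\ Section~\ref{4}), verifying continuity of both sides under the approximation. Two further points require care: the integrability near the diagonal of the product of $F_n$ with the wedge, which I would control by combining Lemma~\ref{fact} with the order of vanishing of $\|\bigwedge_i A_{I_i}\|^2$; and the exact bookkeeping of all normalizing constants, for which the two-body case \eqref{IR} and the flag identity \eqref{ECM} provide the calibration.
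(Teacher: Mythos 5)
Your intuition overlaps substantially with the actual proof --- the identity \eqref{proj_L}, the density $\|\ul{tu}|L^\perp\|^{-(k-1)d}$, the emergence of $F_n$ from the $t$-integral over $S^{k-1}_+$, and the final coefficient extraction from \eqref{polexp} are all there --- but the route you propose for passing to \emph{general} convex bodies has a genuine gap. Your plan is to prove the formula for smooth (or polytopal) bodies and then approximate, ``verifying continuity of both sides''. That verification is precisely the hard point, and no route to it is in sight: the integrand on the right-hand side couples the almost-everywhere-defined principal direction frames $A_{I_i}(K_i;x_i,u_i)$, which are not weakly continuous data under Hausdorff convergence (smooth a polytope and the curvature directions on the rounded edges have no stable limit), with the kernel $F_n$, which is unbounded near the diagonal $u_1=\cdots=u_k$. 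Section \ref{5} of the paper shows that mass near this diagonal is a real obstruction --- it is exactly why the flag representation in Theorem \ref{mainthm} requires general-position hypotheses while Theorem \ref{mivol} does not --- so establishing the continuity you need is essentially tantamount to proving the theorem, making the plan circular. A second, related problem: your ``first substantial step'' (trading $\Ha^{d-1}$ on $\nor(K_i)$ for the spherical variable, leaving the reverse radii $r_{ij}$ and a mixed-discriminant structure) is only valid when the support functions are of class $C^{1,1}$; for polytopes the Jacobian \eqref{jacobp2} vanishes on the parts of $\nor(K_i)$ over faces of dimension $\ge 1$, which carry all of the content, so this normalization cannot be the backbone of a general argument --- the paper uses it only in Remark (b) \emph{after} the theorem.

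The missing device that makes approximation unnecessary is the product body. Following Schneider \cite{S94}, the paper sets $\ul{K}=K_1\times\cdots\times K_k\subset\R^{kd}$ and uses $V_d(K_1+\cdots+K_k)=k^{d/2}\Ha^d(\ul{K}|L)$ with $L$ the diagonal; the projection formula \cite[Lemma~4.1]{GHHRW} holds for \emph{arbitrary} convex bodies and yields exactly your conjectured density $\|\ul{u}|L^\perp\|^{(1-k)d}$, weighted by $\langle A_I(\ul{K};\ul{x},\ul{u}),L\rangle^2$. After splitting off the pieces with some $x_i\in\inte K_i$ (which contribute $\sum_i V_d(K_i)$), the remaining part $\nor_*(\ul{K})$ is parametrized by $f((\ul{x,u}),t)=(\ul{x},\ul{tu})$ over $(\nor(K_1)\times\cdots\times\nor(K_k))\times S^{k-1}_+$, and --- this is the key point --- the generalized principal curvatures of $\ul{K}$ at $f((\ul{x,u}),t)$ are identified explicitly as $t_ik_{ij}$ together with $k-1$ infinite curvatures in the directions $(v_1^iu_1,\dots,v_k^iu_k)$, so the coarea computation (with Jacobian $\prod_{i,j}\sqrt{1+t_i^2k_{ij}^2}/\sqrt{1+k_{ij}^2}$) is valid $\Ha^{kd-1}$-almost everywhere with no smoothness whatsoever; your fiber-product/Blaschke--Petkantschin step and the mixed-discriminant structure both come out of this single computation. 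One more point you gloss over: coefficient comparison in \eqref{polexp} by itself does not separate the summands with $|I_i|=n_i$; the paper needs the additional scaling argument (the map $F_{\ul{\lambda}}$ with explicit Jacobian) to show that the $|I_i|=n_i$ block is exactly $\lambda_i^{n_i}$-homogeneous in $K_i$. Your $k=2$ calibration against \eqref{IR} via \eqref{laterefs} and the duality $[U,V]=[U^\perp,V^\perp]$ is correct as far as it goes, but since no induction over $k$ is performed, it cannot carry the general case.
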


\begin{proof} We follow an idea of Schneider \cite{S94} and represent $V_d(K_1+\dots +K_k)$ as the volume of a projection from $\R^{dk}$ onto the diagonal space. To be more precise, let
$\ul{K}:=K_1\times\dots\times K_k$ (which is a convex body in $\R^{kd}$). We shall use underlined symbols for points of $\R^{kd}$, such as
$$\ul{x}=(x_1,\dots,x_k),\,\ul{u}=(u_1,\dots,u_k)\in\R^{kd}.$$
Let further
$$L:=\{(x,\dots,x):x\in\R^d\}\in G(kd,d)$$
denote the $d$-dimensional diagonal subspace of $\R^{kd}$. The orthogonal projection to $L$ acts as
$$\ul{x}|L=\frac 1k\left(\sum_{i=1}^kx_i,\dots,\sum_{i=1}^kx_i\right),$$
and it is not difficult to verify that
\begin{equation}  \label{proj_L}
\|\ul{x}|L^\perp\|^2=\frac 1k\sum_{1\leq i<j\leq k}\|x_i-x_j\|^2.
\end{equation}
Since $x\mapsto k^{-1/2}(x,\dots,x)$ is clearly an isometry $\R^d\to L$, we have
$$V_d(K_1+\dots +K_k)=k^{d/2}\Ha^d(\ul{K}|L),$$
and an application of the projection formula \cite[Lemma~4.1]{GHHRW} yields
\begin{equation}
V_d(K_1+\dots +K_k)=\frac{k^{d/2}}{\omega_{(k-1)d}}\int_{\nor(\ul{K})}H(\ul{x},\ul{u})\, \Ha^{kd-1}(d(\ul{x},\ul{u})),
\end{equation}
where
\begin{equation}  \label{Def_H}
H(\ul{x},\ul{u}):=\|\ul{u}|L^\perp\|^{(1-k)d}\sum_{|I|=d}\bK_{I^c}(\ul{K};\ul{x},\ul{u})\langle A_I(\ul{K};\ul{x},\ul{u}),L\rangle^2.
\end{equation}
The unit normal bundle of $\ul{K}$ can be represented as
$$\nor(\ul{K})=\{(\ul{x},\ul{u})\in\R^{kd}\times S^{kd-1}:\, u_i\in\Nor(K_i,x_i), x_i\in K_i,\, i=1,\dots,k\}.$$
We consider first the subsets
$$\nor_i(\ul{K}):=\nor(\ul{K})\cap\{(\ul{x},\ul{u}):\, x_i\in\inte K_i\},\quad i=1,\dots,k.$$
Choosing $i=1$ for simplicity, we get that $\nor_1(\ul{K})$ and $\inte K_1\times\nor(K_2\times\dots\times K_k)$ are isometric, and at any $(\ul{x},\ul{u})\in\nor_1(\ul{K})$ there are $d$ principal directions $(e_j,0,\dots,0)$, $j=1,\dots,d$, with vanishing principal curvatures. Thus, the sum in \eqref{Def_H} reduces to one summand, and since
$$\langle (e_1,0,\dots,0)\wedge\dots\wedge(e_d,0,\dots,0),L\rangle^2=\frac{1}{k^d},$$
we get, for $(\ul{x},\ul{u})\in \nor_1(\ul{K})$ (which implies $u_1=0$),
$$H(\ul{x},\ul{u})=\frac{1}{k^d}\,\|\ul{u}|L^\perp\|^{(1-k)d}
\bK_{I_0}(K_2\times\dots\times K_k;(x_2,\dots,x_k),(u_2,\dots,u_k))$$
with $I_0=\{1,\dots,(k-1)d-1\}$. Hence,
$$
\frac{k^{d/2}}{\omega_{(k-1)d}}\int_{\nor_1(\ul{K})}H\, d\Ha^{kd-1}=V_d(K_1)\psi(K_2,\dots,K_k)
$$
with some function $\psi$ independent of $K_1$ and homogeneous of degree $0$ in $K_2,\dots ,K_k$. Since $V_d(K_1+\dots +K_k)$ can be expanded as a sum of functionals of specific homogeneity degrees (see \eqref{polexp}) and the only term which is $d$-homogeneous in $K_1$ is $V_d(K_1)$, we get
\begin{equation}\label{eqeq1}
\frac{k^{d/2}}{\omega_{(k-1)d}}\int_{\nor_i(\ul{K})}H\, d\Ha^{kd-1}=V_d(K_i),
\end{equation}
first for $i=1$, but then similarly for all $i=1,\dots,k$.
Also, $H=0$ on $\nor_i (\ul{K})\cap\nor_j(\ul{K})$ if $i\neq j$ (this follows from the above argument since then at least $2d$ principal curvatures vanish). Thus we obtain
\begin{equation}
V_d(K_1+\dots +K_k)=\sum_{i=1}^kV_d(K_i)+\frac{k^{d/2}}{\omega_{(k-1)d}}\int_{\nor_*(\ul{K})}H(\ul{x},\ul{u})\, \Ha^{kd-1}(d(\ul{x},\ul{u}))
\end{equation}
with
$$\nor_*(\ul{K}):=\nor(\ul{K})\cap\{(\ul{x},\ul{u}):\, x_i\in\partial K_i,\, i=1\dots,k\}.$$
Consider
$$f:(\nor (K_1)\times\dots\times\nor (K_k))\times S^{k-1}_+\to\nor_*(\ul{K}),\quad ((\ul{x,u}),t)\mapsto(\ul{x},\ul{tu}),$$
where $(\ul{x,u}):= (x_1,u_1,\dots ,x_k,u_k)$ and $\ul{tu}:=(t_1u_1,\dots,t_ku_k)$. The mapping $f$ is clearly Lipschitz, injective and we have $\Ha^{kd-1}(\nor_*(\ul{K})\setminus\im f)=0$. Then the coarea formula yields
$$\int_{(\nor (K_1)\times\dots\times\nor (K_k))\times S^{k-1}_+}(H\circ f)\ap J_{kd-1}f\, d\Ha^{kd-1}=\int_{\nor_*(\ul{K})}H\, d\Ha^{kd-1}.$$
In order to obtain the approximate Jacobian $\ap J_{kd-1}f$ of $f$ at $((\ul{x,u}),t)$, for almost all $((\ul{x,u}),t)\in
(\nor (K_1)\times\dots\times\nor (K_k))\times S^{k-1}_+$, let $(v_1,\dots,v_{k-1},t)$ be an orthonormal basis of $\R^k$ and
put $k_{ij}:=k_j(K_i;x_i,u_i)$ and $a_{ij}:=a_j(K_i;x_i,u_i)$, for $i\in\{1,\ldots,k\}$, $j\in \{1,\ldots,d-1\}$. Then, the vectors
$$\frac{1}{\sqrt{1+k_{ij}^2}}(\underbrace{0,\dots,0}_{2(i-1)},a_{ij},k_{ij}a_{ij},\underbrace{0,\dots,0}_{2(k-i)},0),\quad i=1,\dots, k,\,j=1,\dots,d-1,$$
and
$$(\underbrace{0,\dots,0}_{2k},v_i),\quad i=1,\dots,k-1,$$
form an orthonormal basis of $\Tan^{kd-1}((\nor (K_1)\times\dots\times\nor (K_k))\times S^{k-1}_+,((\ul{x,u}),t))$, for almost all $((\ul{x,u}),t)$, and these vectors are mapped by the approximate differential $\ap Df((\ul{x,u}),t)$ onto the vectors
$$\frac{1}{\sqrt{1+k_{ij}^2}}(\underbrace{0,\dots,0}_{i-1},a_{ij},\underbrace{0,\dots,0}_{k-1},t_ik_{ij}a_{ij},\underbrace{0,\dots,0}_{k-i}),\quad i=1,\dots, k,\,j=1,\dots,d-1,$$
and
$$(0,\dots,0,v_1^iu_1,\dots,v_k^iu_k),\quad i=1,\dots, k-1.$$
These vectors are again orthogonal and it follows that
$$\ap J_{kd-1}f((\ul{x,u}),t)=\prod_{i=1}^k\prod_{j=1}^{d-1}\frac{\sqrt{1+t_i^2k_{ij}^2}}{\sqrt{1+k_{ij}^2}}.$$
We also see that the vectors
$$b_{ij}:=\begin{cases}
(\underbrace{0,\dots,0}_{i-1},a_{ij},\underbrace{0,\dots,0}_{k-i}),&i=1,\dots,k,\, j=1,\dots,d-1,\\
(v_1^iu_1,\dots,v_k^iu_k),&i=1,\dots,k-1,\, j=d,
\end{cases}$$
are generalized principal directions of curvature of $\ul{K}$ at $f((\ul{x,u}),t)$ with corresponding principal curvatures $t_ik_{ij}$, if $1\leq j\leq d-1$, and $\infty$, if $j=d$.
Thus, in \eqref{Def_H} we may omit the index sets $I\subset\{1,\dots,kd\}$ which, written as subsets of $\{1,\dots ,k\}\times\{1,\dots ,d\}$ (according to the consideration above), contain an index $(i,d)$. With respect to this product form, let $I=I_1\cup\dots\cup I_k$ be an index set of cardinality $d$ decomposed into subsets $I_i$ corresponding to indices from $\{i\}\times\{1,\dots,d-1\}$. Then, we can write
$$\bK_{I^c}(\ul{K};f((\ul{x,u}),t))=\prod_{i=1}^kt_i^{d-1-|I_i|}\prod_{j\in I_i^c}k_{ij}/\prod_{j=1}^{d-1}\sqrt{1+t_i^2k_{ij}^2}$$
and
$$\langle A_I(\ul{K};f((\ul{x,u}),t)),L\rangle^2=k^{-d}\left\|\bigwedge_{i=1}^k\bigwedge_{j\in I_i}a_{ij}\right\|^2,$$
hence,
\begin{align*}
\int_{\nor_*(\ul{K})}H\, d\Ha^{kd-1}&=k^{-d}\int_{(\nor (K_1)\times\dots\times\nor (K_k))\times S^{k-1}_+}\|\ul{tu}|L^\perp\|^{(1-k)d}\\
&\qquad \times\sum_{|I_1|+\dots+|I_k|=d}\left(\prod_{i=1}^kt_i^{d-1-|I_i|}\bK_{I_i^c}(K_i;x_i,u_i)\right) \\ &\qquad \times \left\|\bigwedge_{i=1}^k\bigwedge_{j\in I_i}a_{ij}\right\|^2\Ha^{kd-1}(d((\ul{x,u}),t)).
\end{align*}
In the above integral, the summands with $|I_i|=n_i$ produce integrals with homogeneity degree $n_i$ in $K_i$.
To verify this, let $n_i\in\{0,\ldots,d-1\}$, $n_1+\ldots+n_k=d$, and let $\lambda_i>0$, for $i=1,\ldots,k$.
Let $g:(S^{d-1})^k\times S^{d-1}_+\to [0,\infty)$ be measurable and let
\begin{align*}
\Upsilon_{\underline{n}}(K_1,\ldots,K_k):&=
\int_{(\nor (K_1)\times\dots\times\nor (K_k))\times S^{k-1}_+}g(\ul{u},t) \sum_{|I_1|=n_1,\ldots,|I_k|=n_k}
\\
&\qquad\times \left(\prod_{i=1}^k \bK_{I_i^c}(K_i;x_i,u_i)\right)  \left\|\bigwedge_{i=1}^k\bigwedge_{j\in I_i}a_{ij}\right\|^2\Ha^{kd-1}(d((\ul{x,u}),t)).
\end{align*}
Consider the map
\begin{align*}
F_{\ul{\lambda}} :(\nor (K_1)\times\dots\times\nor (K_k))\times S^{k-1}_+&\to (\nor (\lambda_1K_1)\times\dots\times\nor (\lambda_kK_k))\times S^{k-1}_+,\\
((\ul{x,u}),t)&\mapsto (\lambda_1x_1,u_1,\ldots,\lambda_kx_k,u_k,t).
\end{align*}
For $\mathcal{H}^{d-1}$-almost all $(x_i,u_i)\in\nor(K_i)$, we have
\begin{align*}
k_{ij}(\lambda_i K_i;\lambda_ix_i ,u_i)&=\lambda_i^{-1}k_{ij}(K_i;x_i,u_i),\\
a_{ij}(\lambda_i K_i;\lambda_ix_i ,u_i)&=a_{ij}(K_i;x_i,u_i),
\end{align*}
for $i\in\{1,\ldots,k\}$ and $j\in \{1,\ldots,d-1\}$. Moreover, for $\mathcal{H}^{kd-1}$-almost all $((\ul{x,u}),t)\in
(\nor (K_1)\times\dots\times\nor (K_k))\times S^{k-1}_+$, we have
$$
\ap J_{kd-1}F_{\ul{\lambda}}((\ul{x,u}),t)=\prod_{i=1}^k\lambda_i^{d-1}\prod_{j=1}^{d-1}\frac{\sqrt{1+\left
(\lambda_i^{-1}k_{ij}(K_i;x_i,u_i)\right)^2}}{\sqrt{1+k_{ij}(K_i;x_i,u_i)^2}}.
$$
Now we get
\begin{align*}
&\Upsilon_{\underline{n}}(\lambda_1K_1,\ldots,\lambda_kK_k)\\
&=
\int_{(\nor (\lambda_1K_1)\times\dots\times\nor (\lambda_kK_k))\times S^{k-1}_+}g(\ul{u},t)\sum_{|I_1|=n_1,\ldots,|I_k|=n_k}\\
&\qquad \times\left(\prod_{i=1}^k \bK_{I_i^c}(\lambda_iK_i;y_i,u_i)\right)  \left\|\bigwedge_{i=1}^k\bigwedge_{j\in I_i}a_{ij}(\lambda_iK_i;y_i,u_i)\right\|^2\Ha^{kd-1}(d((\ul{y,u}),t))\\
&=\int_{(\nor ( K_1)\times\dots\times\nor ( K_k))\times S^{k-1}_+}g(\ul{u},t)\sum_{|I_1|=n_1,\ldots,|I_k|=n_k}\\
&\qquad \times\prod_{i=1}^k \left[\left(\lambda_i^{-(d-1-n_i)} \bK_{I_i^c}(K_i;x_i,u_i)\right)
\prod_{j=1}^{d-1}\frac{\sqrt{1+k_{ij}(K_i;x_i,u_i)^2}}{\sqrt{1+\left
(\lambda_i^{-1}k_{ij}(K_i;x_i,u_i)\right)^2}}\right]
\\
&\qquad\times \left\|\bigwedge_{i=1}^k\bigwedge_{j\in I_i}a_{ij}(K_i;x_i,u_i)\right\|^2
\ap J_{kd-1}F_{\ul{\lambda}}((\ul{x,u}),t)\Ha^{kd-1}(d((\ul{x,u}),t))\\
&=\lambda_{1}^{n_1}\cdots \lambda_{k}^{n_k}\Upsilon_{\underline{n}}( K_1,\ldots, K_k).
\end{align*}
Thus, the expansion \eqref{polexp} of $V_d(K_1+\dots+K_k)$ and, finally, a use of \eqref{proj_L} complete the proof.
\end{proof}

\bigskip

\noindent
{\bf Remark.} Relation \eqref{eqeq1} can also be obtained directly, without using the fact that mixed volumes have  a
polynomial expansion. We show this for $i=1$. First, we observe that if $L\in G(p,d)$ and $\beta>0$, then
\begin{align*}
&\int_{S^{p-1}}(1+\beta\|u|L^\perp\|^2)^{-\frac{p}{2}}\, \mathcal{H}^{p-1}(du)\\
&\qquad=\int_{S^{p-1}\cap L}\int_{S^{p-1}\cap L^\perp}\int_0^{\frac{\pi}{2}}
(1+\beta\cos^2t)^{-\frac{p}{2}}(\cos t)^{p-d-1}(\sin t)^{d-1}\, \\
&\qquad\qquad\qquad \times dt\,\mathcal{H}^{p-d-1}(dy)\,\mathcal{H}^{d-1}(dx)\\
&\qquad=\omega_d\omega_{p-d}\int_0^1(1+\beta r^2)^{-\frac{p}{2}}(1-r^2)^{\frac{d-2}{2}}r^{p-d-1}\, dr\\
&\qquad=\frac{1}{2}\omega_d\omega_{p-d}\int_0^1s^{\frac{p-d-2}{2}}(1-s)^{\frac{d-2}{2}}(1+\beta s)^{-\frac{p}{2}}\, ds\\
&\qquad=\frac{1}{2}\omega_d\omega_{p-d}B\left(\tfrac{p-d}{2},\tfrac{d}{2}\right)
{_2F_1}\left(\tfrac{p}{2},\tfrac{p-d}{2};\tfrac{p}{2};-\beta\right)\\
&\qquad=\omega_p (1+\beta)^{-\frac{p-d}2},
\end{align*}
where we used in the first equality that the (smooth) map  $F:(S^{p-1}\cap L)\times (S^{p-1}\cap L^\perp)\times (0,\pi/2)\to S^{p-1}$,
$(x,y,t)\mapsto \sin(t) x+\cos(t) y$, parameterizes $S^{p-1}$ (up to a set of measure zero) and
has the (approximate) Jacobian $J_{p-1}F(x,y,t)=(\sin t)^{d-1}(\cos t)^{p-d-1}$, and in the last two equalities some basic properties of Gaussian hypergeometric functions (see \cite[Equations~15.1.1, 15.1.8, 15.3.1]{AS64}).

Now we put $\tilde x=(x_2,\ldots,x_k)$, use that $u_1=0$ for $(\underline{x},\underline{u})\in \nor_1(\underline{K})$,
and hence
$$
\|\underline{u}|L^\perp\|^2 =\frac{1}{k}\left(\sum_{i\ge 2}\|u_i\|^2+\sum_{2\le i<j\le k}\|u_i-u_j\|^2\right)
=\frac{1}{k}\left(1+(k-1)\|\underline{\tilde u}|\tilde{L}^\perp\|^2\right),
$$
where $\tilde{L}=\{(x,\ldots,x)\in\R^{(k-1)d}:x\in\R^d\}\in G((k-1)d,d)$. Then the coarea formula,
applied with the map $(\tilde x,\tilde u)\mapsto \tilde u$ (with an approximate Jacobian as in  \eqref{jacobp2}), and the above equality with $p=(k-1)d$ and $\beta=k-1$, yield that
\begin{align}
&\frac{k^{\frac{d}{2}}}{\omega_{(k-1)d}}\int_{\nor_1(\ul{K})}H\, d\Ha^{kd-1}\nonumber\\
&\quad= \frac{k^{-\frac{d}{2}}}{\omega_{(k-1)d}}V_d(K_1)\int_{\nor(\tilde{\underline{K}})}
\|\underline{u}|L^\perp\|^{(1-k)d} \mathbb{K}_{I_0}(\tilde{\underline{K}};\underline{\tilde x},\underline{\tilde u})\,
 \Ha^{(k-1)d-1}(d(\underline{\tilde x},\underline{\tilde u})) \nonumber\\
&\quad=\frac{V_d(K_1)}{\omega_{(k-1)d}}k^{\frac{(k-2)d}{2}}
\int_{S^{(k-1)d-1}}\left(1+(k-1)\|\underline{\tilde u}|\tilde{L}^\perp\|^2\right) ^{\frac{(1-k)d}{2}}\,
\mathcal{H}^{(k-1)d-1}(d\underline{\tilde u})\nonumber\\
&\quad=V_d(K_1),\nonumber
\end{align}
where $\tilde{\underline{K}}:=K_2\times\cdots \times K_k$.
\bigskip

Next we emphasize some special cases of Theorem \ref{mivol}.

\bigskip

\noindent
{\bf Remarks.}
(a) If
  $K_1,\ldots,K_k$ are convex bodies of class $C^{1,1}$ (that is, $\partial K$ is of class $C^1$ and the
	exterior unit normal map (the Weingarten map) is Lipschitz), then the integral representation in Theorem \ref{mivol} simplifies. Namely, we then have
\begin{align*}
&\binom{d}{n_1\ldots n_k}V(K_1[n_1],\ldots,K_k[n_k])\\
&\qquad =\int_{\partial K_1} \cdots \int_{\partial K_k}  F_n( n_{\underline{K}}(\underline{x}))
\sum_{{|I_i|=
n_i \atop i=1,\ldots,k}}\left\{\prod_{i=1}^k\prod_{j\in
I_i^c}k_{ij}(x_i)\right\}\\
&\qquad\qquad \times\left\|\bigwedge_{i=1}^k\bigwedge_{j\in
I_i}a_{ij}(x_i)\right\|^2\, \mathcal{H}^{d-1} (d x_k)\cdots \mathcal{H}^{d-1} (d x_1)
\,,
\end{align*}
where
 $n_{\underline{K}}(\underline{x})=(n_{K_1}(x_1),\ldots,
n_{K_k}(x_k))$ and $n_{K_i}(x_i)$ is the unique exterior unit normal vector of $K_i$ at $x_i\in\partial K_i$. Furthermore,
 $k_{ij}(x_i)$, $j=1,\ldots,d-1$, are the principal curvatures of $K_i$ at $x_i$ with corresponding eigenvectors $a_{ij}(x_i)$, $j=1,\ldots,d-1$, of the (generalized) Weingarten map, for $i=1,\ldots,k$. Here we use that if $K$ is of class
$C^{1,1}$, then for $\mathcal{H}^{d-1}$-almost all $(x,u)\in \nor(K)$ we have $k_i(x,u)=k_i(x)\in[0,\infty)$ for $i=1,\ldots,d-1$ (see \cite[Lemma 3.1]{Hug96}). Moreover, the projection map $\pi_1:\nor(K)\to\partial K$, $(x,u)\mapsto x$, has the approximate Jacobian
$$
\ap J_{d-1}\pi_1(x,u)=\prod_{i=1}^{d-1}\frac{1}{\sqrt{1+k_i(K;x,u)^2}}.
$$

(b)  If the convex bodies
 $K_1,\ldots,K_k$ have support functions of class $C^{1,1}$ (that is, the differential exists and is a
 Lipschitz map), then  $K_1,\ldots,K_k$ are strictly convex.
See Lemma 1 in \cite{HRW} for equivalent conditions on a convex body to have a support function of class $C^{1,1}$.
In this case, we obtain
\begin{align*}
&\binom{d}{n_1\ldots n_k}V(K_1[n_1],\ldots,K_k[n_k])\\
&\qquad=\int_{S^{d-1}}\cdots \int_{S^{d-1}} F_n(\underline{u})
\sum_{|I_i|=n_i\atop i=1,\ldots,k}
\left\{\prod_{i=1}^k\prod_{j\in
I_i}r_{ij}(u_i)\right\}\\
&\qquad\qquad\times\left\|\bigwedge_{i=1}^k\bigwedge_{j\in
I_i}a_{ij}(u_i)\right\|^2
\, \mathcal{H}^{d-1} (d u_k)\cdots \mathcal{H}^{d-1} (d u_1)\,,
\end{align*}
where $r_{ij}(u_i)$, $j=1,\ldots,d-1$, are the radii of curvature of $K_i$ in direction $u_i$ with corresponding eigenvectors $a_{ij}(u_i)$, $j=1,\ldots,d-1$, of the (generalized) reverse Weingarten map, for $i=1,\ldots,k$.

Here we use the fact that if the support function $h_K$ of $K$ is of class
$C^{1,1}$, then for $\mathcal{H}^{d-1}$-almost all $(x,u)\in \nor(K)$ we have $k_i(x,u)^{-1}=r_i(x)\in[0,\infty)$ and $k_i(x,u)>0$ for $i=1,\ldots,d-1$ (see \cite[Lemma 3.4]{Hug96}).  Moreover,
the map $S^{d-1}\to \nor(K)$, $u\mapsto (x,u)$, is Lipschitz and the projection map $\pi_2:\nor(K)\to S^{d-1}$, $(x,u)\mapsto u$, has the approximate Jacobian
\begin{equation}\label{jacobp2}
\ap J_{d-1}\pi_2(x,u)=\prod_{i=1}^{d-1}\frac{k_i(K;x,u)}{\sqrt{1+k_i(K;x,u)^2}}.
\end{equation}

(c) The important special case where $K_1,\ldots,K_k$ are convex polytopes will be treated in the next
section.

\section{Mixed volumes of polytopes}\label{4}

Let $P_1,\ldots,P_k$ be polytopes in $\R^d$,
and let $k\ge 2$ and $n=(n_1,\ldots,n_k)$ be as in Theorem \ref{mivol}.
For a polytope $P$ in $\R^d$, we write $\mathcal{F}_j(P)$
for the set of $j$-dimensional faces of $P$, and let $N(P,F)$ denote the
normal cone of $P$ at $F\in \mathcal{F}_j(P)$. Furthermore, we put $n(P,F):=N(P,F)\cap S^{d-1}$. Then, Theorem \ref{mivol} implies
\begin{align}\label{reppol1}
&\binom{d}{n_1\ldots n_k}V(P_1[n_1],\ldots,P_k[n_k])\nonumber\\
&\qquad=\sum_{F_1\in \mathcal{F}_{n_1}(P_1)}\ldots\sum_{F_k\in \mathcal{F}_{n_k}(P_k)}
[ F_1,\ldots,F_k ]^2 V_{n_1}(F_1)\cdots V_{n_k}(F_k) \\
&\qquad\qquad\times\int_{n(P_1,F_1)}\cdots
\int_{n(P_k,F_k)}F_n(\underline{u})\,
\mathcal{H}^{d-1-n_k}(du_k)\cdots\mathcal{H}^{d-1-n_1}(du_1),\nonumber
\end{align}
where  $[F_1,\ldots,F_k]$
denotes the $d$-dimensional volume of the parallelepiped which is obtained as the sum of the unit cubes in
the affine hulls of $F_1,\ldots,F_k$, respectively. In fact, for a polytope
$P\subset\R^d$ we have the disjoint decomposition
$$
\nor(P)=\bigcup_{n=0}^{d-1}\bigcup_{F\in\mathcal{F}_n(P)}\text{relint}(F)\times n(P,F) ,
$$
and for $\mathcal{H}^{d-1}$-almost all $(x,u)\in\nor(P)$ with $x\in \text{relint}(F)$,
$u\in n(P,F)$ and $F\in\mathcal{F}_n(P)$ precisely $n$ of the curvatures $k_i(x,u)$ are zero and the remaining
$d-1-n$ of the curvatures $k_i(x,u)$ are infinite. Moreover, $a_i(x,u)$ is in the linear space parallel to $F$ precisely
if $k_i(x,u)=0$. Formula \eqref{reppol1} now follows by arguing as in \cite[p.~1542]{HS14}.

This special case of Theorem \ref{mivol} is related to
  \cite[Theorem 4.1]{S94},
see also \cite[p.~311]{S14}, as explained below.   The latter result
describes a method of computing
$V(P_1[n_1],\ldots,P_k[n_k])$  by summing the weighted volumes
$[ F_1,\ldots,F_k ]  V_{n_1}(F_1)\cdots V_{n_k}(F_k) $, where
the faces $F_i\in \mathcal{F}_{n_i}(P_i)$ for $i=1,\ldots,k$ are chosen subject to
 a selection rule. More explicitly,
\begin{align}
&\binom{d}{n_1\ldots n_k}V(P_1[n_1],\ldots,P_k[n_k])\nonumber\\
&\qquad=\sum_{F_1\in \mathcal{F}_{n_1}(P_1)}{\stackrel{{\ast}}\ldots}\sum_{F_k\in \mathcal{F}_{n_k}(P_k)}
[F_1,\ldots,F_k]V_{n_1}(F_1)\cdots V_{n_k}(F_k),\label{reppol2}
\end{align}
where the star indicates that the summation extends over all $k$-tuples of faces $(F_1,\ldots,F_k)\in \mathcal{F}_{n_1}(P_1)
\times \cdots\times \mathcal{F}_{n_k}(P_k)$ for which $\text{dim}(F_1+\cdots+F_k)=d$ and
\begin{equation}\label{nonempt}
\bigcap_{i=1}^k[N(P_i,F_i)-x_i]\neq \emptyset.
\end{equation}
Here $x_1,\ldots,x_k\in\R^d$ are fixed vectors (not all zero) such that
$$
\bigcap_{i=1}^k[\text{relint}(N(P_i,G_i))-x_i]= \emptyset
$$
whenever $G_i\in\mathcal{F}(P_i)$ and $\text{dim}(G_1)+\cdots+\text{dim}(G_k)>d$.
Any such $k$-tuple of vectors
$(x_1,\ldots,x_k)\in\R^{kd}$ is called {admissible} for the given polytopes.

Let $(x_1,\ldots,x_k)\in L^\perp\cap S^{kd-1}$. Hence $x_1+\cdots+x_k=0$ and not all of the vectors are zero.
Then, we have $(L+\underline{x})\cap N(\underline{P},\underline{F})\neq \emptyset$ if and only if \eqref{nonempt}
is satisfied. Thus we obtain
\begin{align*}
&\binom{d}{n_1\ldots n_k}V(P_1[n_1],\ldots,P_k[n_k])\\
&\qquad=\sum_{F_1\in \mathcal{F}_{n_1}(P_1)}\ldots\sum_{F_k\in \mathcal{F}_{n_k}(P_k)}
[F_1,\ldots,F_k]V_{n_1}(F_1)\cdots V_{n_k}(F_k)\\
&\qquad\qquad \times\mathbf{1}\{(L+\underline{x})\cap N(\underline{P},\underline{F})\neq \emptyset\},
\end{align*}
provided that $(x_1,\ldots,x_k)$ is admissible. It follows from the argument in \cite[p.~409]{GritzmannKlee}
that $\mathcal{H}^{(k-1)d-1}$-almost all $\underline{x}\in L^\perp\cap S^{kd-1}$ are
admissible. For the proof, let $\underline{x}\in L^\perp$ be not admissible (for the given polytopes). Then there are faces $G_i\in\mathcal{F}(P_i)$ with  $\text{dim}(G_1)+\cdots+\text{dim}(G_k)>d$ and such that
$\cap_{i=1}^k[\text{relint}(N(P_i,G_i))-x_i]\neq \emptyset$. Hence there is some $z\in \R^d$ such that
$$
\underline{z}+\underline{x}\in N(\underline{P},\underline{G})=N(P_1,G_1)\times \cdots \times N(P_k,G_k),
$$
where $\underline{z}=(z,\ldots,z)$. Since $\underline{x}$ is not admissible if and only if
$\underline{z}+\underline{x}$ is not admissible for all $z\in\R^d$, we get
$L+\underline{x}\subset N(\underline{P},\underline{G})$
whenever $\underline{x}$ is not admissible.  Now let $N_a$ denote the set of all
$\underline{x}\in L^\perp$ such that $\underline{x}$ is not admissible. Since
$$N_a=(L+ N_a)|L^\perp\subset \bigcup\Lin(N(\underline{P},\underline{G}))|L^\perp,$$
where the union extends over all $k$-tuples of faces  $G_i\in\mathcal{F}(P_i)$ with
 $\text{dim}(G_1)+\cdots+\text{dim}(G_k)>d$,
and
$$
\text{dim}(\Lin(N(\underline{P},\underline{G})))\le \sum_{i=1}^k(d-\text{dim}(G_i))\le kd-(d+1)=(k-1)d-1
$$
for any such $k$-tuple, and since there are only finitely many of such $k$-tuples,
it follows that $\mathcal{H}^{(k-1)d}(N_a)=0$. Since $\underline{x}$ is not admissible if and only
if $\lambda \underline{x}$ is not admissible for all $\lambda>0$, we get
$\mathcal{H}^{(k-1)d}(N_a\cap S^{kd-1})=0$.

Therefore, integration over $L^\perp\cap S^{kd-1}$ yields
\begin{align*}
&\binom{d}{n_1\ldots n_k}V(P_1[n_1],\ldots,P_k[n_k])\nonumber\\
&\qquad=\frac{1}{\omega_{(k-1)d}}\sum_{F_1\in \mathcal{F}_{n_1}(P_1)}\ldots\sum_{F_k\in \mathcal{F}_{n_k}(P_k)}
[F_1,\ldots,F_k]V_{n_1}(F_1)\cdots V_{n_k}(F_k)\nonumber\\
&\qquad\qquad\times \int_{L^\perp\cap S^{kd-1}}\mathbf{1}\{(L+\underline{x})\cap N(\underline{P},\underline{F})\neq \emptyset\}\, \mathcal{H}^{(k-1)d-1}(d\underline{x}).
\end{align*}

In order to provide the connection between the representations \eqref{reppol1} and
\eqref{reppol2}, we now show that
\begin{align}\label{mixedangle}
&[F_1,\ldots,F_k] \int_{n(P_1,F_1)}\ldots
\int_{n(P_k,F_k)}F_n(\underline{u})\,
\mathcal{H}^{d-1-n_1}(du_1)\ldots\mathcal{H}^{d-1-n_k}(du_k)\nonumber\\
&\qquad=\frac{1}{\omega_{(k-1)d}}\int_{L^\perp\cap S^{kd-1}}
\mathbf{1}\{(L+\underline{x})\cap N(\underline{P},\underline{F})\neq
\emptyset\}\, \mathcal{H}^{(k-1)d-1}(d\underline{x}).
\end{align}
We start by recalling  an auxiliary result. Let $L\in G(p,m)$ and $m\in\{1,\ldots,p-1\}$.
Let $P\subset\R^p$ be a polytope and $F\in\mathcal{F}_m(P)$. Then \cite[(33)]{GHHRW} states that
\begin{align}\label{relappl}
&\int_{n(P,F)}[F,L^\perp]\|u|L^\perp\|^{m-p}\, \mathcal{H}^{p-1-m}(du)\nonumber\\
&\qquad =\int_{L^\perp\cap S^{p-1}} \mathbf{1}\{(L+u)\cap N(P,F) \neq \emptyset\}
 \, \mathcal{H}^{p-m-1}(du).
\end{align}
We apply \eqref{relappl} to $\underline{P}=P_1\times\cdots \times P_k\in\R^{kd}$, its face
$\underline{F}=F_1\times\cdots \times F_k\in\mathcal{F}_d(\underline{P})$ and to the linear subspace
$L=\{(x,\ldots,x)\in\R^{kd}:x\in\R^d\}$ with $p=kd$ and $m=d$. Then we get
\begin{align*}
I:&=\int_{L^\perp \cap S^{kd-1}}\mathbf{1}\{(L+\underline{v})\cap N(\underline{P},\underline{F})\neq \emptyset\}
\, \mathcal{H}^{(k-1)d-1}(d\underline{v})\\
&=\int_{n(\underline{P},\underline{F})}[F_1\times \cdots\times F_k,L^\perp] \|\underline{u}|L^\perp\|^{-(k-1)d}
\, \mathcal{H}^{(k-1)d-1}(d\underline{u}).
\end{align*}
The map $G:n(P_1,F_1)\times\cdots\times n(P_k,F_k)\times S^{k-1}_+ \to n(\underline{P},\underline{F})$ given by
$$
G(u_1,\ldots,u_k,t)= (t_1u_1,\ldots,t_ku_k),
$$
is Lipschitz, injective and onto up to a set of measure zero. It is easy to check that the approximate Jacobian of $G$ is
$\ap J_{kd-1}G(\underline{u},t)=t_1^{d-1-n_1}\cdots t_k^{d-1-n_k}$. Moreover, since
$$[F_1\times \cdots\times F_k,L^\perp]=k^{-\frac{d}{2}}[F_1,\ldots,F_k],$$
we get
\begin{align*}
I&=\int_{n(P_1,F_1)}\cdots \int_{n(P_k,F_k)}\int_{S^{k-1}_+}
 [F_1,\ldots,F_k] k^{\frac{(k-2)d}{2}}\Big(\sum_{1\le i<j\le k}\|t_iu_i-t_ju_j\|^2
\Big)^{-\frac{(k-1)d}{2}}\\
&\qquad\qquad \times t_1^{d-1-n_1}\cdots t_k^{d-1-n_k}\, \mathcal{H}^{k-1}(dt)\,
 \mathcal{H}^{d-1-n_k}(du_k)\cdots \mathcal{H}^{d-1-n_1}(du_1)\\
&=\omega_{(k-1)d}\, [F_1,\ldots,F_k] \int_{n(P_1,F_1)}\cdots
\int_{n(P_k,F_k)}F_n(\underline{u})\, \\
&\qquad\qquad \times
\mathcal{H}^{d-1-n_1}(du_1)\cdots\mathcal{H}^{d-1-n_k}(du_k),
\end{align*}
which provides the asserted relation.

Equation \eqref{mixedangle} suggests to define a {\em mixed exterior angle}
of $P_1,\dots, P_k$ at the faces $F_1,\dots ,F_k$ by
\begin{align*}
&\beta (F_1,\dots ,F_k;P_1,\dots , P_k)\\
&\quad:= [F_1,\ldots,F_k] \int_{n(P_1,F_1)}\cdots
\int_{n(P_k,F_k)}F_n(\underline{u})\,
\mathcal{H}^{d-1-n_1}(du_1)\cdots\mathcal{H}^{d-1-n_k}(du_k) .
\end{align*}
This is a number between 0 and 1, and \eqref{reppol1} thus becomes \eqref{mixedvolpolytopes} in the following
corollary.

\begin{Corollary}
Let $k,d\ge 2$, let $P_1,\ldots,P_k$ be polytopes in $\R^d$,
 and let  $n=(n_1,\ldots,n_k)\in\{0,\ldots,d-1\}^k$ with $n_1+\cdots +n_k=d$. Then
\begin{align}
\binom{d}{n_1\ldots n_k}&V(P_1[n_1],\ldots,P_k[n_k])=\sum_{F_1\in \mathcal{F}_{n_1}(P_1)}\ldots\sum_{F_k\in \mathcal{F}_{n_k}(P_k)}\nonumber\\
& \beta (F_1,\dots ,F_k;P_1,\dots , P_k)[ F_1,\ldots,F_k ] V_{n_1}(F_1)\cdots V_{n_k}(F_k) .\label{mixedvolpolytopes}
\end{align}
\end{Corollary}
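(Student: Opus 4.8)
The plan is to obtain the Corollary as a direct repackaging of the polytope representation \eqref{reppol1}, which is itself the specialization of Theorem \ref{mivol} to polytopes via the face decomposition of $\nor(P)$. First I would recall the structure of a single summand of \eqref{reppol1}: for a fixed tuple of faces $(F_1,\dots,F_k)\in\mathcal{F}_{n_1}(P_1)\times\cdots\times\mathcal{F}_{n_k}(P_k)$, the contribution carries the geometric factor $[F_1,\dots,F_k]^2$ together with the normal-cone integral $\int_{n(P_1,F_1)}\cdots\int_{n(P_k,F_k)}F_n(\underline{u})\,\mathcal{H}^{d-1-n_1}(du_1)\cdots\mathcal{H}^{d-1-n_k}(du_k)$.

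The key observation is that the definition of the mixed exterior angle $\beta(F_1,\dots,F_k;P_1,\dots,P_k)$ bundles precisely one factor $[F_1,\dots,F_k]$ with that multiple integral. Consequently, in each summand one has the identity $[F_1,\dots,F_k]^2\int_{n(P_1,F_1)}\cdots\int_{n(P_k,F_k)}F_n(\underline{u})\,d\mathcal{H}\cdots=[F_1,\dots,F_k]\cdot\beta(F_1,\dots,F_k;P_1,\dots,P_k)$. Substituting this into every summand of \eqref{reppol1} immediately yields the asserted formula \eqref{mixedvolpolytopes}, so the bulk of the Corollary is a purely formal consequence of material already in hand.

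It then remains to justify the accompanying assertion that $\beta$ is a number between $0$ and $1$. For this I would invoke the already-established identity \eqref{mixedangle}, which rewrites $\beta(F_1,\dots,F_k;P_1,\dots,P_k)$ as $\omega_{(k-1)d}^{-1}$ times the $\mathcal{H}^{(k-1)d-1}$-measure of the set $\{\underline{x}\in L^\perp\cap S^{kd-1}:(L+\underline{x})\cap N(\underline{P},\underline{F})\neq\emptyset\}$. Since this set is a Borel subset of $L^\perp\cap S^{kd-1}$, whose total $\mathcal{H}^{(k-1)d-1}$-measure equals $\omega_{(k-1)d}$ (as $L^\perp$ is $(k-1)d$-dimensional), the quotient automatically lies in $[0,1]$.

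The main obstacle is not located in the Corollary itself: all the analytic effort, namely the coarea computations behind \eqref{reppol1} and the face-integral identity \eqref{mixedangle}, has been carried out beforehand. The one point genuinely deserving care is finiteness. The integrand $F_n(\underline{u})$ is singular where the normals coincide (it behaves like a negative power of $\sum_{i<j}\|t_iu_i-t_ju_j\|^2$), so a priori the defining integral for $\beta$ could diverge; it is exactly the representation \eqref{mixedangle} as an indicator integral over a sphere that simultaneously guarantees finiteness and the upper bound $\beta\le 1$. I would also remark that a summand contributes nontrivially only when $\dim(F_1+\cdots+F_k)=d$, which is precisely the condition under which the parallelepiped volume $[F_1,\dots,F_k]$ is positive, keeping the reindexed sum consistent with the selection rule already discussed for \eqref{reppol2}.
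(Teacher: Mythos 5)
Your proposal is correct and follows essentially the same route as the paper: it rewrites each summand $[F_1,\ldots,F_k]^2\cdot\int\cdots\int F_n(\underline{u})\,d\mathcal{H}\cdots$ of \eqref{reppol1} as $[F_1,\ldots,F_k]\cdot\beta(F_1,\ldots,F_k;P_1,\ldots,P_k)$ using the definition of the mixed exterior angle, exactly as the paper does when it states that \eqref{reppol1} ``thus becomes'' \eqref{mixedvolpolytopes}. Your justification that $\beta\in[0,1]$ via the indicator-integral representation \eqref{mixedangle} over $L^\perp\cap S^{kd-1}$ (whose total $\mathcal{H}^{(k-1)d-1}$-measure is $\omega_{(k-1)d}$) is likewise the paper's argument, and your remark on finiteness is a correct and welcome elaboration of a point the paper leaves implicit.
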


\medskip

For $k=2$, we have $\beta (F_1,F_2;P_1,P_2)= \gamma (F_1,-F_2,P_1,-P_2)$, where the latter is the common external angle defined in \cite[p. 240]{S14}, hence  \eqref{mixedvolpolytopes} yields a generalization of \cite[(5.66)]{S14} to more than two bodies. For another extension, to mixed measures of translative integral geometry, see Corollary 1 in \cite{HR}.

\section{Flag representation of mixed volumes}\label{5}
The principal aim in this section is to establish a flag representation of  mixed volumes $V(K_1[n_1],\ldots,K_k[n_k])$ for convex bodies $K_1,\ldots,K_k$ in $\R^d$ and $n_1,\ldots,n_k\in\{0,\ldots,d-1\}$ with $n_1+\cdots+n_k=d$. As in the case $k=2$, which we explored in \cite{HRW}, a condition of general position is needed.
We shall show, that this is satisfied, for example, if $k-1$ of the bodies $K_i$ are randomly (and independently) rotated and/or reflected, where a random rotation and/or reflection refers to the (unique) invariant probability measure $\nu_d$ on the orthogonal group $O(d)$.

As a second case, we show that the result holds if the support functions of all but one of the convex bodies $K_i$
are of class $C^{1,1}$ (differentiable and the gradient is a 1-Lipschitz map). As remarked before, the corresponding convex bodies are strictly convex, and in fact, they are freely rolling inside some ball
(see Lemma 1 in \cite{HRW}).

A third condition which ensures the result is that $K_1,\dots ,K_k$ are
convex polytopes in general $(n_1,\dots ,n_k)$-position. To define this notion, recall that $\mathcal{F}_j(K)$
denotes the set of $j$-dimensional faces of a convex polytope $K$, and  $N(K,F)$ is the
normal cone of $F\in \mathcal{F}_j(K)$ at $K$.
Then we say that convex polytopes $K_1,\dots ,K_k\subset\rd$
are in general $(n_1,\dots ,n_k)$-position if $$\bigcap_{i=1}^k N(K_i,F_i)=\{ 0\}$$ holds
for all faces $F_i\in \mathcal{F}_{n_i}(K_i)$, $i=1,\dots ,k$. Note that this condition is satisfied, for instance, if
$$
\Lin(F_1)+\cdots+\Lin(F_k)=\R^d
$$
for all faces $F_i\in \mathcal{F}_{n_i}(K_i)$, $i=1,\dots ,k$. For $k=2$, this latter condition was used in \cite{HRW}. If ${\cal P}$ denotes the set of polytopes in ${\cal K}$, then it is easy to see that the tuples $(K_1,\ldots ,K_k)$ of convex polytopes in general $(n_1,\dots ,n_k)$-position are dense in ${\cal P}^k$ in the Hausdorff metric.

A major step in proving  a flag representation of mixed volumes under any of these assumptions consists in establishing a corresponding flag representation for approximate mixed volumes (of arbitrary convex bodies), which we define next, and then using an approximation argument. For $\varepsilon>0$, $n_1,\ldots,n_k\in\{0,\ldots,d-1\}$ with $n_1+\cdots+n_k=d$ and convex bodies $K_1,\ldots,K_k$ in $\R^d$,
a bounded $\varepsilon$-approximation of $V(K_1[n_1],\ldots,K_k[n_k])$ is defined by
\begin{align*}
&{\binom{d}{n_1\ldots n_k}}V^{(\varepsilon)}(K_1[n_1],\ldots,K_k[n_k])\\
&\quad :=\int_{\nor (K_1)\times\dots\times\nor (K_k)}F_{n}^{(\varepsilon)}(u_1,\dots,u_k)\sum_{|I_1|=n_1,\dots,|I_k|=n_k}\left(\prod_{i=1}^k\bK_{I_i^c}(K_i;x_i,u_i)\right)\\
&\quad\quad\times\left\|\bigwedge_{i=1}^k A_{I_i}(K_i;x_i,u_i)\right\|^2\, \Ha^{k(d-1)}(d(x_1,u_1,\dots,x_k,u_k)),
\end{align*}
where
$$
F^{(\varepsilon)}_n(u_1,\dots ,u_k) := F_n(u_1,\dots ,u_k) {\bf 1}\{ \|\underline{u}|L^\perp\|\ge \ve\}
$$
(recall that $L$ is the diagonal in $\R^{kd}$ and $\underline{u}=(u_1,\ldots,u_k)$).
It is easy to see that $F^{(\varepsilon)}_n$ is nonnegative and
 bounded from above on $(S^{d-1})^k$. The monotone convergence
theorem and Theorem \ref{mivol} show that
$$
V^{(\varepsilon)}(K_1[n_1],\ldots,K_k[n_k])\nearrow V(K_1[n_1],\ldots,K_k[n_k])
$$
as $\varepsilon\searrow 0$. Our first result provides a flag representation for the approximate mixed volumes.

\begin{Theorem}\label{thmapprox}
Let $K_1, \dots ,K_k\subset\rd$  be convex bodies in $\rd$,
$n=(n_1,\dots ,n_k)$ $\in\{1,\ldots,d-1\}^k$ with $n_1+\dots +n_k=d$ and $\ve >0$.
Then, there is a continuous function $\varphi_{n}$ on $F^\perp(d,d-1-n_1)\times\cdots\times F^\perp(d,{d-1-n_k})$ (independent of $K_1,\dots ,K_k$ and $\ve$) such that
\begin{align}\label{flagrepeps}
&V^{(\varepsilon)}(K_1[n_1],\ldots,K_k[n_k])\ = \int_{F^\perp(d,{d-1-n_k})}\cdots\int_{F^\perp(d,d-1-n_1)}
F_n^{(\varepsilon)}(u_1,\dots ,u_k)\nonumber \\
&\quad \times\varphi_n(u_1,U_1,\dots, u_k,U_k)\, \Omega_{n_1}(K_1;d(u_1,U_1))\cdots \Omega_{n_k}(K_k;d(u_k,U_k)).
\end{align}
\end{Theorem}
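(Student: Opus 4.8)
The plan is to verify \eqref{flagrepeps} by inserting the curvature representation \eqref{ECM} of each flag measure $\Omega_{n_i}(K_i;\cdot)$ into its right-hand side and comparing the outcome with the defining curvature representation of $V^{(\varepsilon)}$. Writing out $\Omega_{n_i}(K_i;d(u_i,U_i))$ via \eqref{ECM} turns the $k$ integrations over the flag manifolds $\Forth(d,d-1-n_i)$ into integrations over the normal bundles $\nor(K_i)$, together with an inner averaging of $F_n^{(\varepsilon)}(u_1,\dots,u_k)\,\varphi_n(u_1,U_1,\dots,u_k,U_k)\prod_{i=1}^k\langle U_i,A_{I_i^c}(K_i;x_i,u_i)\rangle^2$ over $U_i\in G^{u_i^\perp}(d-1,d-1-n_i)$ against $\nu^{d-1}_{d-1-n_i}$, weighted by $\prod_i\BK_{I_i^c}(K_i;x_i,u_i)$ and summed over $|I_i^c|=d-1-n_i$ (equivalently over $|I_i|=n_i$). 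Since $F_n^{(\varepsilon)}$ is bounded, the flag measures are finite, and $\varphi_n$ will be continuous, hence bounded, on the compact product of flag manifolds, Fubini's theorem applies and the order of integration may be interchanged freely; this is exactly where the $\varepsilon$-cutoff is used, as it removes the singularity of $F_n$ near coincident normals that would otherwise force an appeal to Lemma \ref{fact}.

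After this interchange, the right-hand side of \eqref{flagrepeps} becomes an integral over $\nor(K_1)\times\dots\times\nor(K_k)$ whose integrand agrees with that in the definition of $V^{(\varepsilon)}$ up to the common factor $F_n^{(\varepsilon)}\prod_i\BK_{I_i^c}$. Although $\BK_{I_i^c}$ and the subspaces $A_{I_i}$ are tied to the same point $(x_i,u_i)$, the identity to be checked involves only the $A_{I_i}$, so it suffices to establish a pointwise identity on subspaces: I must produce a continuous $\varphi_n$ such that, for every $u_i\in S^{d-1}$ and every $n_i$-dimensional subspace $T_i\subseteq u_i^\perp$ (playing the role of $A_{I_i}$, with $T_i^\perp$ its complement within $u_i^\perp$ playing the role of $A_{I_i^c}$),
\begin{align}\label{plan-key}
&\prod_{i=1}^k\gamma(d,n_i)\int_{G^{u_1^\perp}(d-1,d-1-n_1)}\!\!\!\!\cdots\int_{G^{u_k^\perp}(d-1,d-1-n_k)}\!\!\!\varphi_n(u_1,U_1,\dots,u_k,U_k)\nonumber\\
&\qquad\times\prod_{i=1}^k\langle U_i,T_i^\perp\rangle^2\,\nu^{d-1}_{d-1-n_k}(dU_k)\cdots\nu^{d-1}_{d-1-n_1}(dU_1)=\binom{d}{n_1\dots n_k}^{-1}\Big\|\bigwedge_{i=1}^k T_i\Big\|^2 .
\end{align}
Once \eqref{plan-key} holds for all such subspaces, it holds in particular for the principal directions at each $(x_i,u_i)$, so the integrands, and hence the full integrals, match term by term in the sum over $I_1,\dots,I_k$; this yields \eqref{flagrepeps}.

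To construct $\varphi_n$ I first apply orthogonal-complement duality inside each $u_i^\perp$: for the equidimensional subspaces $U_i,T_i^\perp\subseteq u_i^\perp$ one has $\langle U_i,T_i^\perp\rangle^2=\langle W_i,T_i\rangle^2$, where $W_i:=U_i^{\perp}$ is the $n_i$-dimensional orthogonal complement of $U_i$ within $u_i^\perp$; since $\nu^{d-1}_{d-1-n_i}$ is $O(u_i^\perp)$-invariant, $W_i$ is Haar-distributed among the $n_i$-dimensional subspaces of $u_i^\perp$. Thus \eqref{plan-key} requires that the coupled averaging operator $\mathcal{A}=\bigotimes_{i=1}^k\mathcal{A}_i$, with $(\mathcal{A}_ig)(T_i)=\int_{G^{u_i^\perp}(d-1,n_i)}g(W_i)\langle W_i,T_i\rangle^2\,\nu^{d-1}_{n_i}(dW_i)$, carry $\varphi_n$ (re-expressed through the $W_i$) to a constant multiple of $\|\bigwedge_iT_i\|^2$. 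The decisive point is that both the kernel $\langle W_i,T_i\rangle^2=\langle e_{W_i}^{\otimes2},e_{T_i}^{\otimes2}\rangle$ and the target are quadratic in the Pl\"{u}cker coordinates: $\|\bigwedge_iT_i\|^2=\langle\bigwedge_ie_{T_i},\bigwedge_ie_{T_i}\rangle$ is a quadratic form in each unit $n_i$-vector $e_{T_i}$, so everything lives in the tensor product of the symmetric squares $\mathrm{Sym}^2(\bigwedge_{n_i}u_i^\perp)$. On this finite-dimensional space each $\mathcal{A}_i$ is a positive-definite self-adjoint operator (its kernel is positive-definite and the feature vectors $e_{W_i}^{\otimes2}$ span the symmetric square), hence invertible, and so is $\mathcal{A}$. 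Setting $\varphi_n:=c_n\,\mathcal{A}^{-1}\|\bigwedge_iT_i\|^2$, with $c_n$ absorbing the constants $\gamma(d,n_i)$ and $\binom{d}{n_1\dots n_k}$, solves \eqref{plan-key}; as a fixed polynomial in the Pl\"{u}cker coordinates of the $U_i$ depending smoothly on the $u_i$, it is continuous on $\prod_{i=1}^k\Forth(d,d-1-n_i)$ and manifestly independent of $K_1,\dots,K_k$ and of $\varepsilon$.

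The heart of the argument, and its main obstacle, is precisely this inversion of $\mathcal{A}$, equivalently, checking that $\|\bigwedge_iT_i\|^2$ lies in the range of $\mathcal{A}$ and that $\mathcal{A}$ is injective there. In practice one can bypass the abstract reasoning by evaluating each $\mathcal{A}_i$ on an explicit basis of $\mathrm{Sym}^2(\bigwedge_{n_i}u_i^\perp)$ and inverting the resulting small, body-independent matrix; continuity and body-independence of $\varphi_n$ then follow from its explicit polynomial form. With $\varphi_n$ in hand, reversing the substitution of \eqref{ECM} together with the Fubini interchange and collecting the constants turns the right-hand side of \eqref{flagrepeps} into the defining integral for $V^{(\varepsilon)}$, which completes the proof.
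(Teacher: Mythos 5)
Your global reduction coincides with the paper's: you substitute \eqref{ECM} for each flag measure, interchange integrals (legitimately, since the $\varepsilon$-cutoff makes $F_n^{(\varepsilon)}$ bounded and the flag measures are finite), pass by orthocomplement duality from $\langle U_i,A_{I_i^c}\rangle^2$ to $\langle U_i^\perp\cap u_i^\perp,A_{I_i}\rangle^2$, and reduce everything to a pointwise integral equation on products of Grassmannians --- which is exactly the paper's Lemma \ref{Lem}, up to the multinomial constant that you absorb into $\varphi_n$ (your normalization is in fact the more careful one relative to the theorem as stated). The genuine divergence is in how that integral equation is solved, and there your argument has a concrete flaw: the parenthetical claim that ``the feature vectors $e_{W_i}^{\otimes 2}$ span the symmetric square'' is false whenever $2\le n_i\le d-3$. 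The span of $\{e_W^{\otimes 2}:W \text{ simple}\}$ is the orthogonal complement of the quadratic forms vanishing on the cone of simple $n_i$-vectors in $u_i^\perp$, and by the Pl\"ucker relations such forms exist: for $\dim u_i^\perp=4$ and $n_i=2$ (e.g.\ $d=5$, $(n_1,n_2)=(2,3)$, a case covered by the theorem), the Pl\"ucker quadric $\xi\mapsto \xi_{12}\xi_{34}-\xi_{13}\xi_{24}+\xi_{14}\xi_{23}$ (in coordinates with respect to an orthonormal basis) vanishes on every simple $2$-vector. Consequently $\mathcal{A}_i$ has a nontrivial kernel on $\mathrm{Sym}^2\bigl(\bigwedge_{n_i}u_i^\perp\bigr)$ --- indeed your own identity $\langle\mathcal{A}_iQ,Q\rangle=\int\langle Q,e_W^{\otimes 2}\rangle^2\,dW$ shows the kernel consists precisely of the forms vanishing on the Pl\"ucker cone --- so $\mathcal{A}_i$ is not invertible there, and the ``small, body-independent matrix'' of your fallback is singular rather than invertible.

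The gap is local and repairable. Replace $\mathrm{Sym}^2\bigl(\bigwedge_{n_i}u_i^\perp\bigr)$ by the space $V_i$ of \emph{functions} on $G^{u_i^\perp}(d-1,n_i)$ induced by quadratic forms in Pl\"ucker coordinates (equivalently, by the span of the $e_W^{\otimes 2}$). On $V_i$ your reproducing argument does give strict positive definiteness: if $g=\langle Q,(\cdot)^{\otimes 2}\rangle$ and $\int g(W)\,e_W^{\otimes 2}\,dW=0$, then pairing with $Q$ yields $\int g(W)^2\,dW=0$, so $g=0$ as a function. Since $\bigl\|\bigwedge_i T_i\bigr\|^2$ is linear in each $e_{T_i}^{\otimes 2}$, as a function on the product of Grassmannians it lies in $\bigotimes_i V_i$, so a solution $\varphi_n$ exists and is well defined; continuity in the $u_i$ then still requires a word (e.g.\ $O(u_i^\perp)$-equivariance of $\mathcal{A}_i$, so that its matrix in a frame adapted to $u_i$ is constant while the coefficients of the target depend continuously on $(u_1,\dots,u_k)$, or your explicit computation carried out modulo the Pl\"ucker relations). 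With this repair, your route is a genuine alternative to the paper's: Lemma \ref{Lem} constructs the solution explicitly as the combination \eqref{Phi} of the zonal functions $\Phi_{u_1,\ldots,u_k}^{p_1,\ldots,p_k}$, reducing the inversion, factor by factor, to the regular matrices $D^{d-1}_{j_i}$ of \cite{HRW}; that buys explicit coefficients and immediate continuity and basis-independence, whereas your Gram-operator argument is shorter and avoids the machinery of the $\ell$-products, but is purely existential and, as written, needs the corrections above to be sound.
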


In order to obtain a suitable function $\varphi_{n}$, as stated in Theorem \ref{thmapprox}, we have to find a solution for an integral equation on
Grassmannians. This is the subject of the next lemma, which generalizes Proposition 2 in \cite{HRW}. In the following,
we write $a\wedge b$ for the minimum of two integers $a,b$. It will always be clear from the context that this is not a  multivector
in the exterior algebra of vector spaces.

\begin{Lemma}\label{Lem}
Let $u_1,\ldots,u_k\in S^{d-1}$ and $1\leq j_1,\ldots,j_k\leq d-1$ be given so that $j_1+\cdots+j_k=d$. Then there exists a continuous function
$$\Phi_{u_1,\ldots,u_k}:\, G^{u_1^\perp}(d-1,j_1)\times\cdots\times G^{u_k^\perp}(d-1,j_k)\to\R$$
such that for all $A_1\in G^{u_1^\perp}(d-1,j_1),\ldots,A_k\in G^{u_k^\perp}(d-1,j_k)$,
$$
\int\cdots\int\Phi_{u_1,\ldots,u_k}(U_1,\ldots,U_k)\, \langle U_1,A_1\rangle^2\cdots \langle U_k,A_k\rangle^2dU_1\cdots dU_k=\|A_1\wedge\cdots\wedge A_k\|^2,
$$
where we write shortly $dU_i=\nu^{d-1}_{j_i}(dU_i)$ for the integration over $U_i\in G^{u_i^\perp}(d-1,j_i)$, and
on the right-hand side of the above equation the subspaces $A_i$ are considered as the associated unit simple multivectors.
\end{Lemma}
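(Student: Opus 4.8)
The plan is to read the integral equation as a tensor-product equation and to reduce everything to inverting a single, explicit operator on a finite-dimensional space of polynomials. For fixed $u\in S^{d-1}$ and $1\le j\le d-1$, write $\hat U\in\bigwedge^j u^\perp$ for a unit simple multivector representing $U\in G^{u^\perp}(d-1,j)$ (well defined up to sign, so $\hat U\otimes\hat U$ is canonical), and introduce
$$
(\mathcal{T}_{u,j}f)(A):=\int_{G^{u^\perp}(d-1,j)} f(U)\,\langle U,A\rangle^2\, \nu^{d-1}_{j}(dU).
$$
With this notation the asserted identity is $(\mathcal{T}_{u_1,j_1}\otimes\cdots\otimes\mathcal{T}_{u_k,j_k})\Phi_{u_1,\dots,u_k}=\big((A_1,\dots,A_k)\mapsto\|A_1\wedge\cdots\wedge A_k\|^2\big)$, each factor acting in its own variable. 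The structural observation that makes this tractable is that the right-hand side \emph{separates}: since $\hat A_1\wedge\cdots\wedge\hat A_k\in\bigwedge^d\R^d\cong\R$ equals a scalar $D(\hat A_1,\dots,\hat A_k)$ that is multilinear in the $\hat A_i$, its square is a finite sum $\|\hat A_1\wedge\cdots\wedge\hat A_k\|^2=\sum_m\prod_{i=1}^k q_{i,m}(A_i)$, where each $q_{i,m}$ is a quadratic form in $\hat A_i$ restricted to the Grassmannian. Hence it suffices to solve, for each $i$ and each $m$, the single-variable equation $\mathcal{T}_{u_i,j_i}\psi_{i,m}=q_{i,m}$ and to set $\Phi_{u_1,\dots,u_k}=\sum_m\prod_{i=1}^k\psi_{i,m}$.

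I would then reduce the single-variable problem to linear algebra on a finite-dimensional space. Let $M_{u,j}$ be the space of functions $U\mapsto\langle \hat U\otimes\hat U,S\rangle$ with $S\in\mathrm{Sym}^2(\bigwedge^j u^\perp)$. This is finite-dimensional; it contains every kernel $\langle\cdot,A\rangle^2=\langle\hat U\otimes\hat U,\hat A\otimes\hat A\rangle$ and every quadratic form $q_{i,m}$; and $\mathcal{T}_{u,j}$ maps $M_{u,j}$ into itself, because $(\mathcal{T}_{u,j}f)(A)=\langle \hat A\otimes\hat A,\int f(U)\,\hat U\otimes\hat U\,\nu^{d-1}_j(dU)\rangle$.

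The heart of the matter is to prove that $\mathcal{T}_{u,j}$ is invertible on $M_{u,j}$. The operator has the positive-semidefinite Gram kernel $\langle U,A\rangle^2=\langle \hat U\otimes\hat U,\hat A\otimes\hat A\rangle$, so
$$
\langle\mathcal{T}_{u,j}f,f\rangle=\Big\|\int f(U)\,\hat U\otimes\hat U\,\nu^{d-1}_j(dU)\Big\|^2\ge 0 .
$$
For $f\in M_{u,j}$ this vanishes exactly when $f\equiv 0$: writing $f=\langle \hat U\otimes\hat U,S\rangle$, the function vanishes identically iff $S$ is orthogonal to all $\hat U\otimes\hat U$, which is iff $\int f(U)\,\hat U\otimes\hat U\,\nu^{d-1}_j(dU)=0$. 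Thus $\mathcal{T}_{u,j}|_{M_{u,j}}$ is positive definite, hence invertible. (Equivalently one may decompose $M_{u,j}$ into $O(u^\perp)$-irreducibles, on each of which $\mathcal{T}_{u,j}$ acts as a scalar $c_\lambda$ by Schur's lemma; the same argument shows every relevant $c_\lambda$ is strictly positive, and these could in principle be computed explicitly by zonal integration as in \cite{HRW}.)

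Finally, setting $\psi_{i,m}:=(\mathcal{T}_{u_i,j_i}|_{M_{u_i,j_i}})^{-1}q_{i,m}$ and $\Phi_{u_1,\dots,u_k}=\sum_m\prod_i\psi_{i,m}$ produces the required function: it is a polynomial in the Plücker coordinates of each $U_i$, hence continuous on the product of Grassmannians, and $(\mathcal{T}_{u_1,j_1}\otimes\cdots\otimes\mathcal{T}_{u_k,j_k})\Phi_{u_1,\dots,u_k}$ returns $\|A_1\wedge\cdots\wedge A_k\|^2$ by linearity and the separation from the first step. I expect the invertibility step to be the main obstacle: without the Gram/positivity structure one would be forced into explicit harmonic analysis on the Grassmannian to rule out vanishing eigenvalues, precisely the computation that made the case $k=2$ of \cite{HRW} delicate; recognizing $\mathcal{T}_{u,j}$ as a Gram operator is what lets me bypass it and, at the same time, handle all factors uniformly.
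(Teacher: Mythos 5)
Your proof is correct, and it takes a genuinely different route from the paper's. The paper is constructive: it defines $\Phi_{u_1,\ldots,u_k}=\sum_{p_1,\ldots,p_k}a_{p_1,\ldots,p_k}\Phi^{p_1,\ldots,p_k}_{u_1,\ldots,u_k}$, where $\Phi^{p_1,\ldots,p_k}_{u_1,\ldots,u_k}$ is an explicit sum of squared wedge products over index sets $I_i\in\cI^i_{p_i}$; it then integrates out one Grassmannian at a time, using the factorization $\|V^1_{I_1}\wedge\cdots\wedge V^k_{I_k}\|^2=\langle V^1_{I_1},p_{u_1^\perp}(V^\perp)\rangle^2\|u_1\wedge\xi\|^2$ together with the higher-order products $\langle\cdot,\cdot\rangle_\ell$ and \cite[Lemma~4]{HRW}, and finally determines the coefficients from the linear system $\sum_{p_i}a_{p_i,\ldots,p_k}d^{d-1,j_i}_{p_i,q_i}=a_{p_{i+1},\ldots,p_k}\delta_{q_i,0}$, whose solvability rests on the regularity of the matrices $D^{d-1}_{j_i}$ imported from \cite[Proposition~1]{HRW}. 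You replace this machinery by two observations: the right-hand side separates, $\|A_1\wedge\cdots\wedge A_k\|^2=\sum_m\prod_{i=1}^k q_{i,m}(A_i)$, upon expanding the multilinear map $(\hat A_1,\ldots,\hat A_k)\mapsto\hat A_1\wedge\cdots\wedge\hat A_k$ in Pl\"ucker coordinates and squaring; and $\mathcal{T}_{u,j}$ is a Gram operator, positive definite on the finite-dimensional space $M_{u,j}$, since for $f=\langle\hat U\otimes\hat U,S\rangle$ and $T_f:=\int f(U)\,\hat U\otimes\hat U\,\nu^{d-1}_j(dU)$ one has $\langle T_f,S\rangle=\int f^2\,d\nu^{d-1}_j$, so $T_f=0$ forces $f\equiv 0$ (this identity is the clean way to state your somewhat compressed vanishing step), and injectivity on the finite-dimensional invariant subspace $M_{u,j}$ gives invertibility. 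In effect you re-derive abstractly exactly the invertibility that the paper obtains concretely via the matrices $D^{d-1}_{j_i}$, so your argument is shorter in its key step, self-contained on that point, and uniform in all $k$ factors. What the paper's explicit route buys is that the coefficients $a_{p_1,\ldots,p_k}$ depend only on $d$ and $j_1,\ldots,j_k$, not on $u_1,\ldots,u_k$, so the resulting function is manifestly jointly continuous in $(u_1,U_1,\ldots,u_k,U_k)$ --- which is what later makes $\varphi_n$ in Theorem \ref{thmapprox} continuous on the product of flag manifolds. Your construction, as written, yields $\Phi_{u_1,\ldots,u_k}$ only for each fixed $(u_1,\ldots,u_k)$; for the downstream application you would want to add the easy equivariance remark that $\mathcal{T}_{\rho u,j}=\rho\circ\mathcal{T}_{u,j}\circ\rho^{-1}$ and $M_{\rho u,j}=\rho M_{u,j}$ for $\rho\in O(d)$, so that the canonical solutions $\psi_{i,m}=(\mathcal{T}_{u_i,j_i}|_{M_{u_i,j_i}})^{-1}q_{i,m}$ also depend continuously on the $u_i$; the lemma itself, however, is fully established by your argument.
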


\begin{proof}
For given subspaces $U_i\in G^{u_i^\perp}(d-1,j_i)$, choose orthonormal bases\linebreak $\{ v^i_1,\ldots,v^i_{d-1}\}$ of $u_i^\perp$ so that
$$U_i=\Lin\{ v^i_1,\ldots,v^i_{j_i}\},\quad i=1,\ldots,k.$$
For numbers $0\leq p_i\leq j_i\wedge(d-1-j_i)$, define the function
\begin{equation}\label{eqn1}
\Phi_{u_1,\ldots,u_k}^{p_1,\ldots,p_k}(U_1,\ldots,U_k):=\sum_{I_1\in\cI^1_{p_1}}\dots\sum_{I_k\in\cI^k_{p_k}}
\left\|V^1_{I_1}\wedge\cdots \wedge V^k_{I_k}\right\|^2,
\end{equation}
where $V^i_I=\bigwedge_{l\in I}v^i_l$ and  $\cI^i_{p_i}$ denotes the family of all index sets $I\subset\{1,\ldots,d-1\}$ with $|I|=j_i$ and $|I\cap\{1,\ldots,j_i\}|=j_i-p_i$, $i=1,\ldots, k$. (The fact that $\Phi_{u_1,\ldots,u_k}^{p_1,\ldots,p_k}(U_1,\ldots,U_k)$ is a function of the subspaces $U_1,\ldots,U_k$ and does not depend on the choice of the orthonormal bases, follows similarly as in the proof of Proposition~2 in \cite{HRW} and is also implicitly contained in the argument below.)

We shall show that the function
\begin{equation}  \label{Phi}
\Phi_{u_1,\ldots,u_k}(U_1,\ldots,U_k):=\sum_{p_1}\dots\sum_{p_k}a_{p_1,\ldots,p_k}\Phi_{u_1,\ldots,u_k}^{p_1,\ldots,p_k}(U_1,\ldots,U_k)
\end{equation}
fulfills the requirement of the lemma for suitably chosen coefficients $a_{p_1,\ldots,p_k}$. The summation over $p_i$ runs from $0$ to $j_i\wedge(d-1-j_i)$, here and in the sequel ($i=1,\ldots,k$).

Define $\xi:=V^2_{I_2}\wedge\dots\wedge V^k_{I_k}$ and let $V\in G(d,d-j_1)$ be the linear subspace associated with $\xi$
if $\xi\neq 0$. If $\xi=0$, we choose $V\in G(d,d-j_1)$ arbitrarily. Then we have
\begin{eqnarray*}
\|V^1_{I_1}\wedge\dots\wedge V^k_{I_k}\|^2&=&\|V^1_{I_1}\wedge V\|^2\|\xi\|^2\\
&=&  \|V^1_{I_1}\wedge (V\cap u_1^\perp)\|^2\|p_{V^\perp}u_1\|^2\|\xi\|^2\\
&=&\langle V^1_{I_1},p_{u_1^\perp}(V^\perp)\rangle^2\|u_1\wedge \xi\|^2,
\end{eqnarray*}
which remains true also if $\xi=0$. Further, note that $p_{u_1^\perp}(V^\perp)$ equals the orthogonal complement of
$V\cap u_1^\perp$ in $u_1^\perp$ and that $\langle V^1_{I_1},p_{u_1^\perp}(V^\perp)\rangle^2=0$ if $V\subset u_1^\perp$.

We can thus write
\begin{eqnarray*}
\lefteqn{\Phi_{u_1,\ldots,u_k}^{p_1,\ldots,p_k}(U_1,\ldots,U_k)}\\&=&
\sum_{I_1\in\cI^1_{p_1}}\dots\sum_{I_k\in\cI^k_{p_k}} \langle V^1_{I_1},p_{u_1^\perp}(V^\perp)\rangle^2\|u_1\wedge \xi\|^2\\
&=&\sum_{I_2\in\cI^2_{p_2}}\dots\sum_{I_k\in\cI^k_{p_k}} \langle U_1,p_{u_1^\perp}(V^\perp)\rangle_{p_1}^2\|u_1\wedge \xi\|^2,
\end{eqnarray*}
where the $\ell$th product $\langle \cdot,\cdot\rangle_\ell$  on $G^{u_1^\perp}(d-1,j_1)$, for $0\le \ell\le j_1\wedge(d-1-j_1)$,
is defined and discussed in \cite[Section 5]{HRW}.
Integrating over $G^{u_1^\perp}(d-1,j_1)$, we obtain by \cite[Lemma~4]{HRW}
\begin{eqnarray*}
\lefteqn{\int \Phi_{u_1,\ldots,u_k}^{p_1,\ldots,p_k}(U_1,\ldots,U_k)\langle U_1,A_1\rangle^2\, dU_1}\\
&=&\sum_{q_1}d^{d-1,j_1}_{p_1,q_1}\sum_{I_2\in\cI^2_{p_2}}\dots\sum_{I_k\in\cI^k_{p_k}} \langle A_1,p_{u_1^\perp}V^\perp\rangle_{q_1}^2\|u_1\wedge \xi\|^2,
\end{eqnarray*}
for $q_1=0,\ldots,j_1\wedge(d-1-j_1)$,
with certain constants $d^{d-1,j_1}_{p_1,q_1}$.
If the coefficients $a_{p_1,\ldots,p_k}$ fulfill
$$\sum_{p_1}a_{p_1,\ldots,p_k}d^{d-1,j_1}_{p_1,q_1}=a_{p_2,\ldots,p_k}\text{ if }q_1=0\text{ and }0\text{ otherwise},$$
for some $a_{p_2,\ldots,p_k}$, we get
\begin{eqnarray*}
\lefteqn{\int \Phi_{u_1,\ldots,u_k}(U_1,\ldots,U_k)\langle U_1,A_1\rangle^2\, dU_1}\\
&=&\sum_{p_2}\dots\sum_{p_k}a_{p_2,\ldots,p_k}\sum_{I_2\in\cI^2_{p_2}}\dots\sum_{I_k\in\cI^k_{p_k}} \langle A_1,p_{u_1^\perp}(V^\perp)\rangle^2\|u_1\wedge \xi\|^2\\
&=&\sum_{p_2}\dots\sum_{p_k}a_{p_2,\ldots,p_k}\sum_{I_2\in\cI^2_{p_2}}\dots\sum_{I_k\in\cI^k_{p_k}} \| A_1\wedge V^2_{I_2}\wedge\dots\wedge V^k_{I_k}\|^2,
\end{eqnarray*}
which is also true if $V\subset u_1^\perp$ or if $\xi=0$, where  both sides of the equation are zero.

Continuing in the same way the integration with respect to $U_2,\ldots,U_k$, we get the desired solution, provided that
\begin{equation}  \label{E*}
\sum_{p_i}a_{p_i,\ldots,p_k}d^{d-1,j_i}_{p_i,q_i}=a_{p_{i+1},\ldots,p_k}\delta_{q_i,0},\quad q_i=0,\ldots, j_i\wedge(d-1-j_i),
\end{equation}
for $ i=1,\ldots, k-1$, with suitable coefficients $a_{p_i,\ldots,p_k}$ defined recursively. From \cite[Proposition~1]{HRW} we know that the matrices
$$D^{d-1}_{j_i}:=(d_{p,q}^{d-1,j_i})_{p,q=0}^{j_i\wedge(d-1-j_i)}$$
are regular for $i=1,\ldots,k$. Therefore, if we choose
$$a_{p_i,\ldots,p_k}=a^i_{p_i}\cdots a^k_{p_k}$$
with
$$(a^i_0,\ldots,a^i_{j_i\wedge(d-1-j_i)})=(1,0,\ldots,0)(D^{d-1}_{j_i})^{-1}$$
for $i=1,\ldots, k$,
then \eqref{E*} is satisfied and the proof is complete.
\end{proof}

\begin{proof}[Proof of Theorem \ref{thmapprox}]

Let  $\varepsilon>0$ be fixed.
Recall that
 $F^{(\varepsilon)}_n$ is nonnegative and  bounded from above on $(S^{d-1})^k$.

We define $c(d,n):= \gamma (d,n_1)\cdots \gamma (d,n_k)$ and
$$
\varphi_n(u_1,U_1,\ldots,u_k,U_k):=c(d,n)^{-1}\cdot\Phi_{u_1,\dots ,u_k }(U_1^\perp\cap u_1^\perp,\dots ,U_k^\perp\cap u_k^\perp).
$$
From \eqref{ECM},  we get
\begin{align*}
&\int_{F^\perp(d,d-1-n_1)}\cdots\int_{F^\perp(d,d-1-n_k)}F^{(\varepsilon)}_n(u_1,\dots, u_k)
\varphi_n(u_1,U_1,\ldots,u_k,U_k)\\
&\quad\times \Omega_{n_k}(K_k;d(u_k,U_k))\cdots \Omega_{n_1}(K_1;d(u_1,U_1))\\
&\ =  \int_{\nor(K_1)}\cdots\int_{\nor (K_k)}F^{(\varepsilon)}_n(u_1,\dots,u_k)\\
&\quad\times \sum_{|I_1|=d-1-n_1}\cdots \sum_{|I_k|=d-1-n_k} {\mathbb K}_{I_1}(K_1;x_1,u_1)\cdots {\mathbb K}_{I_k}(K_k;x_k,u_k)\\
&\quad\times \int_{G^{u_1^\perp}(d-1,d-1-n_1)}\cdots\int_{G^{u_k^\perp}(d-1,d-1-n_k)}
c(d,n)\varphi_n(u_1,U_1,\ldots,u_k,U_k)\,\\
&\quad\times\prod_{i=1}^k \langle U_i,A_{I_i}(K_i;x_i,u_i)\rangle^2  \, dU_k\cdots dU_1 \,
 {\cal H}^{d-1}(d(x_k,u_k))\cdots {\cal H}^{d-1}(d(x_1,u_1))\\
&\ =  \int_{\nor(K_1)}\cdots\int_{\nor (K_k)}F^{(\varepsilon)}_n(u_1,\dots,u_k)\\
&\quad\times \sum_{|I_1|=n_1}\cdots \sum_{|I_k|=n_k} {\mathbb K}_{I_1^c}(K_1;x_1,u_1)\cdots  {\mathbb K}_{I_k^c}(K_k;x_k,u_k)\\
&\quad\times \int_{G^{u_1^\perp}(d-1,d-1-n_1)}\cdots\int_{G^{u_k^\perp}(d-1,d-1-n_k)}
c(d,n)\varphi_n(u_1,U_1,\ldots,u_k,U_k)\\
&\quad\times\prod_{i=1}^k \langle U_i,A_{I_i^c}(K_i;x_i,u_i)\rangle^2 \, dU_k\cdots dU_1 \,{\cal H}^{d-1}(d(x_k,u_k))\cdots {\cal H}^{d-1}(d(x_1,u_1)).
\end{align*}
Since $\langle U_i,A_{I_i^c}(K_i;x_i,u_i)\rangle^2=\langle U_i^\perp\cap u_i^\perp,A_{I_i}(K_i;x_i,u_i)\rangle^2$
for $i=1,\ldots,k$, by the definition of $\varphi_n$  and Lemma \ref{Lem}, we deduce that
\begin{align*}
&\int_{F^\perp(d,d-1-n_1)}\cdots\int_{F^\perp(d,d-1-n_k)}F^{(\varepsilon)}_n(u_1,\dots, u_k)
\varphi_n(u_1,U_1,\ldots,u_k,U_k)\\
&\quad\times \Omega_{n_k}(K_k;d(u_k,U_k))\cdots \Omega_{n_1}(K_1;d(u_1,U_1))\\
&\ =  \int_{\nor(K_1)}\cdots\int_{\nor (K_k)}F^{(\varepsilon)}_n(u_1,\dots,u_k)\\
&\quad\times \sum_{|I_1|=n_1}\cdots \sum_{|I_k|=n_k} {\mathbb K}_{I_1^c}(K_1;x_1,u_1)\cdots  {\mathbb K}_{I_k^c}(K_k;x_k,u_k)\\
&\quad\times \int_{G^{u_1^\perp}(d-1,n_1)}\cdots\int_{G^{u_k^\perp}(d-1,n_k)}\Phi_{u_1,\ldots,u_k}(U_1,\dots ,U_k)\,\\
&\quad\times\prod_{i=1}^k \langle U_i,A_{I_i}(K_i;x_i,u_i)\rangle^2 \, dU_k\cdots dU_1 \,
 {\cal H}^{d-1}(d(x_k,u_k))\cdots {\cal H}^{d-1}(d(x_1,u_1))\\
&\ =  \int_{\nor(K_1)}\cdots\int_{\nor (K_k)}F^{(\varepsilon)}_n(u_1,\dots,u_k)\\
&\quad\times \sum_{|I_1|=n_1}\cdots \sum_{|I_k|=n_k} {\mathbb K}_{I_1^c}(K_1;x_1,u_1)\cdots  {\mathbb K}_{I_k^c}(K_k;x_k,u_k)\\
&\quad\times \| A_{I_1}(K_1;x_1,u_1)\wedge\dots\wedge A_{I_k}(K_k;x_k,u_k)\|^2 \,{\cal H}^{d-1}(d(x_k,u_k))\cdots {\cal H}^{d-1}(d(x_1,u_1))\\
&= \binom{d}{n_1\dots n_k} V^{(\varepsilon)}(K_1[n_1],\ldots,K_k[n_k]),
\end{align*}
which proves the asserted representation \eqref{flagrepeps}.
\end{proof}

From Theorem \ref{thmapprox} we now deduce the following limiting case under suitable assumptions of relative position.

\begin{Theorem}\label{mainthm}
Let $K_1, \dots ,K_k\subset\rd$  be convex bodies in $\rd$,  and let
$n=(n_1,\dots ,n_k)$ $\in\{1,\ldots,d-1\}^k$ with $n_1+\dots +n_k=d$.
Then, there is a continuous function $\varphi_{n}$ on $F^\perp(d,d-1-n_1)\times\cdots\times F^\perp(d,{d-1-n_k})$ (independent of $K_1,\dots ,K_k$) such that
\begin{align}\label{flagrep}
&V(K_1[n_1],\ldots,K_k[n_k])\ = \int_{F^\perp(d,{d-1-n_k})}\cdots\int_{F^\perp(d,d-1-n_1)} F_n(u_1,\dots ,u_k)\nonumber \\
&\quad \times\varphi_n(u_1,U_1,\dots, u_k,U_k)\, \Omega_{n_1}(K_1;d(u_1,U_1))\cdots \Omega_{n_k}(K_k;d(u_k,U_k))
\end{align}
holds
\begin{enumerate}
\item[{\rm (a)}] for $(\nu_d)^{k-1}$-almost all $(\rho_2,\dots,\rho_k)\in O(d)^{k-1}$, if $K_2,\dots ,K_k$ are replaced by $\rho_2 K_2,\dots ,\rho_kK_k$;
\item[{\rm (b)}] if all but one of the convex bodies $K_i$ have a support function of class $C^{1,1}$;
\item[{\rm (c)}] if $K_1,\dots ,K_k$ are
convex polytopes in general $(n_1,\dots ,n_k)$-position.
\end{enumerate}
\end{Theorem}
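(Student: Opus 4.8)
The plan is to deduce \eqref{flagrep} from the approximate identity \eqref{flagrepeps} by letting $\varepsilon\searrow 0$. The curvature side is already under control: the monotone convergence argument stated before Theorem \ref{thmapprox} gives $V^{(\varepsilon)}(K_1[n_1],\dots,K_k[n_k])\nearrow V(K_1[n_1],\dots,K_k[n_k])$ for \emph{arbitrary} bodies, since $F_n^{(\varepsilon)}\nearrow F_n$ pointwise. Hence everything reduces to passing to the limit on the right-hand side of \eqref{flagrepeps}. As $\varphi_n$ is not sign-definite, this must be done by dominated convergence, so the task is to verify, under each of (a)--(c), the absolute integrability
\begin{equation*}
\int_{F^\perp(d,d-1-n_1)}\!\!\cdots\int_{F^\perp(d,d-1-n_k)} F_n(u_1,\dots,u_k)\,\bigl|\varphi_n(u_1,U_1,\dots,u_k,U_k)\bigr|\,\Omega_{n_1}(K_1;\cdot)\cdots\Omega_{n_k}(K_k;\cdot)<\infty. \tag{$\ast$}
\end{equation*}
Granting $(\ast)$, the dominated convergence theorem converts \eqref{flagrepeps} into \eqref{flagrep} and, incidentally, shows that the right-hand side of \eqref{flagrep} is a genuine (absolutely convergent) integral.

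The crux is the balance, near the diagonal $\Delta:=\{(u_1,\dots,u_k)\in(S^{d-1})^k:u_1=\dots=u_k\}$, between the blow-up of $F_n$ and the vanishing of $\varphi_n$. Writing $\rho:=\|\underline u\,|\,L^\perp\|$, which by \eqref{proj_L} is comparable to the distance from $\underline u$ to $\Delta$ and vanishes precisely on $\Delta$, one checks from the definition of $F_n$ (using $\sum_{i<j}\|t_iu_i-t_ju_j\|^2=k-\|\sum_i t_iu_i\|^2$ and localising the $t$-integral near the point $t_1=\dots=t_k$) that $F_n(\underline u)\le C\rho^{-(k-1)(d-1)}$ near $\Delta$, the exponent being exactly the codimension of $\Delta$ in $(S^{d-1})^k$. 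This critical singularity is compensated by $\varphi_n=c(d,n)^{-1}\Phi_{u_1,\dots,u_k}(U_1^\perp\cap u_1^\perp,\dots,U_k^\perp\cap u_k^\perp)$: by \eqref{Phi} and \eqref{eqn1}, $\Phi$ is a finite combination, with bounded coefficients, of the \emph{nonnegative} functions $\Phi^{p_1,\dots,p_k}$, each a sum of squared norms $\|V^1_{I_1}\wedge\dots\wedge V^k_{I_k}\|^2$ of simple $d$-vectors; when $u_1=\dots=u_k=u$ all factors $v^i_l$ lie in the single $(d-1)$-dimensional space $u^\perp$, so every such $d$-vector vanishes on $\Delta$. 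Since the $\Phi^{\vec p}$ are nonnegative and depend smoothly on the flags, their vanishing is of second order, giving the uniform bound $|\varphi_n|\le C\rho^2$ in a neighbourhood of $\Delta$. Consequently $F_n|\varphi_n|\le C'\rho^{2-(k-1)(d-1)}$ near $\Delta$, and the surplus exponent $2$ renders $\rho^{2-(k-1)(d-1)}$ integrable against any product measure whose spherical marginals $S_{n_i}(K_i;\cdot)$ have bounded densities (this is the quantitative counterpart of Lemma \ref{fact}, the borderline exponent being $-(k-1)(d-1)$). Establishing the two pointwise bounds uniformly, in particular the upper bound for $F_n$, is the step I expect to demand the most care.

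With this estimate the three cases follow. In case (c), $\Omega_{n_i}(K_i;\cdot)$ is concentrated on flags $(u_i,U_i)$ with $u_i\in N(K_i,F_i)\cap S^{d-1}$ for some $F_i\in\mathcal F_{n_i}(K_i)$; general $(n_1,\dots,n_k)$-position, $\bigcap_iN(K_i,F_i)=\{0\}$, forbids a common unit normal, so the compact support of the product measure is disjoint from $\Delta$, $F_n$ is bounded there, and $(\ast)$ together with the passage to the limit is immediate (the integral being a finite sum). In case (b), say $K_2,\dots,K_k$ have support functions of class $C^{1,1}$; then these bodies are strictly convex with bounded radii of curvature, so each $S_{n_i}(K_i;\cdot)$, hence the spherical marginal of $\Omega_{n_i}(K_i;\cdot)$, has a bounded density, and since $|\varphi_n|\le C\rho^2$ is independent of the $U_i$ one integrates them out and applies the previous paragraph to obtain $(\ast)$. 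In case (a), I would establish $(\ast)$ for $(\nu_d)^{k-1}$-almost all $(\rho_2,\dots,\rho_k)$ by bounding the left side of $(\ast)$ with $K_i$ replaced by $\rho_iK_i$ ($i\ge 2$) by $C\int F_n\rho^2\,d\Omega_{n_1}(K_1)\cdots$, integrating over $O(d)^{k-1}$, and using Fubini together with the invariance of $\nu_d$: the rotational averages of the measures $\Omega_{n_i}(\rho_iK_i;\cdot)$ produce uniform spherical marginals, reducing the average to the bounded-density situation of case (b), which is finite by the same budget $2-(k-1)(d-1)>-(k-1)(d-1)$. Finiteness of the $O(d)^{k-1}$-average yields $(\ast)$ for almost every rotation tuple, and dominated convergence completes the proof.
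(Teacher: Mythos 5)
Your proposal is correct and follows essentially the same route as the paper's proof: the $\varepsilon$-approximation of Theorem \ref{thmapprox}, monotone convergence on the curvature side, dominated convergence on the flag side justified by the quadratic vanishing $|\varphi_n|\le c\,\|\underline{u}\,|\,L^\perp\|^2$ near the diagonal (the paper's Lemma \ref{upperbound}), and the three cases (a)--(c) handled exactly as in the paper (rotation averaging plus invariance of the mean area measures, bounded densities of $S_{n_i}(K_i,\cdot)$ for $C^{1,1}$ support functions, and compactness keeping the support away from the diagonal for polytopes in general position). Your only deviations are technical and sound: you justify the quadratic bound by an abstract second-order-vanishing argument for nonnegative smooth functions instead of the paper's explicit wedge estimate, and you integrate out $t$ first to obtain the pointwise bound $F_n\le C\rho^{-(k-1)(d-1)}$ followed by a tubular-neighbourhood count on $(S^{d-1})^k$, whereas the paper splits the $t$-domain (Lemma \ref{neulem}) and transports the joint integral to $S^{kd-1}$ via a bi-Lipschitz change of variables before applying Lemma \ref{fact}.
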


\begin{proof}  We choose $\varphi_n$ as in Theorem \ref{thmapprox}.
As pointed out before, Theorem \ref{mivol} and the monotone convergence theorem imply that
$$ V^{(\varepsilon)}(K_1[n_1],\ldots,K_k[n_k])\nearrow V(K_1[n_1],\ldots,K_k[n_k]),
\qquad \varepsilon\searrow 0.$$
Thus, in order to finish the proof of Theorem \ref{mainthm}, we have to show that
\begin{align*}
\lim_{\ve\to 0} &\int_{F^\perp(d,d-1-n_1)}\cdots\int_{F^\perp(d,d-1-n_k)}F^{(\varepsilon)}_n(u_1,\dots, u_k)
\varphi_n(u_1,U_1,\ldots,u_k,U_k)\\
&\quad\times \Omega_{n_k}(K_k;d(u_k,U_k))\cdots \Omega_{n_1}(K_1;d(u_1,U_1))\\
= &\int_{F^\perp(d,d-1-n_1)}\cdots\int_{F^\perp(d,d-1-n_k)}F_n(u_1,\dots, u_k)
\varphi_n(u_1,U_1,\ldots,u_k,U_k)\\
&\quad\times \Omega_{n_k}(K_k;d(u_k,U_k))\cdots \Omega_{n_1}(K_1;d(u_1,U_1))
\end{align*}
in each of the three cases listed in the theorem. For this, we use that $F_n^{(\ep)}\nearrow F_n$ as $\ep\searrow 0$ and verify that the dominated convergence theorem can be applied. The main step consists in finding a suitable upper bound for
$$
G := F_n(u_1,\dots,u_k)\cdot |\varphi_n(u_1,U_1,\ldots,u_k,U_k)| .
$$

\begin{Lemma}\label{upperbound} There is a constant $c\ge 0$ such that
$$
|\varphi_n(u_1,U_1,\ldots,u_k,U_k)|
\le c\, \|\underline{u} | L^\perp\|^2
$$
for all $(u_i,U_i)\in F^\perp(d,d-1-n_i)$, $i=1,\dots ,k$.
\end{Lemma}

\begin{proof} In view of the definition of the function $\varphi_n$, it is sufficient to show that
$$
\left\| V^1_{n_{1}}\wedge\cdots\wedge V^k_{n_k}\right\|\le d\left\|\underline{u}\vert L^\perp\right\|
$$
whenever $V^i_{n_i}=v^i_1\wedge\cdots\wedge v^i_{n_i}$, $i=1,\ldots,k$, $\{v_1^i,\ldots,v^i_{d-1},u_i\}$ is
an orthonormal basis of $\R^d$, and $n_1+\cdots+n_k=d$.

For this purpose, we put $\tau:=\max\{\|u_i-u_j\|:1\le i<j\le k\}$,
hence $\tau\le \sqrt{k}\,\|\underline{u}\vert L^\perp\|$.

If $\tau\ge 1$, then $\left\| V^1_{n_{1}}\wedge\cdots\wedge V^k_{n_k}\right\|\le 1\le \sqrt{k}\,\|\underline{u}\vert L^\perp\|$.

Now suppose that $\tau<1$. Let $v_d\in S^{d-1}$ be such that $\|v_d-u_i\| \le \tau $ for $i=1,\ldots,k$ (for instance $v_d:=u_1$), and let  $\{v_1,\ldots,v_d\}$ be an orthonormal basis of $\R^d$. Then there are unique $\alpha(i,j)\in\R$ and
$v^i_{j,\perp}\in v_d^\perp$ such that $v^i_j=v^i_{j,\perp}+\alpha(i,j)v_d$ for $i=1,\ldots,k$ and $j=1,\ldots,n_i$.
Thus, in particular, $\|v^i_{j,\perp}\|\le 1$ and $|\alpha(i,j)|=|\langle v^i_j,v_d\rangle |=|\langle v^i_j,v_d-u_i\rangle|\le \tau<1$. Hence, we obtain
$$
\left\| V^1_{n_{1}}\wedge\cdots\wedge V^k_{n_k}\right\|=\left\|
\bigwedge_{i=1}^k\bigwedge_{j=1}^{n_i}\left(v^i_{j,\perp}+\alpha(i,j)v_d\right)
\right\|\le d\tau,
$$
where we used that $n_1+\cdots+n_k=d$ and thus
$$
\bigwedge_{i=1}^k\bigwedge_{j=1}^{n_i} v^i_{j,\perp}
=0,
$$
which completes the proof.
\end{proof}

Hence $G$ can be bounded from above by
\begin{equation}\label{uppbound}
c\, \|\underline{u} | L^\perp\|^2 \int_{S^{k-1}_+}\|\ul{tu}|L^\perp\|^{-(k-1)d}\,\Ha^{k-1}(dt) .
\end{equation}
Define
$$S^{k-1}_\ast:=\left\{t\in S^{k-1}_+:\, t_i\geq\frac 1{2\sqrt{k}},\, i=1,\dots,k\right\}.$$

\begin{Lemma}\label{neulem}
Let $u_1,\dots,u_k\in S^{d-1}$.
\begin{enumerate}
\item[{\rm (1)}] If $t\in S^{k-1}_+\setminus S^{k-1}_\ast$, then $\|\ul{tu}|L^\perp\|\ge 1/(2k)$.
\item[{\rm (2)}] If $t\in  S^{k-1}_\ast$, then $\|\ul{tu}|L^\perp\|\ge \frac 1{2\sqrt{k}}\|\ul{u}|L^\perp\|$.
\end{enumerate}
\end{Lemma}

\begin{proof}
(1) If $t=(t_1,\dots,t_k)\in S^{k-1}_+\setminus S^{k-1}_\ast$, then  $t_j\geq \frac{1}{\sqrt{k}}$
for some $j\in\{1,\ldots,k\}$. In fact, otherwise we get $0<t_j< \frac 1{\sqrt{k}}$ for $j=1,\dots ,k$ and $0<t_i< \frac 1{2\sqrt{k}}$ for some $i\in\{1,\dots ,k\}$. Since $k\ge 2$, this would imply
$$ 1 = t_1^2+\dots +t_k^2\leq \frac{1}{4k}+(k-1)\frac 1k=\frac{4k-3}{4k}<1,$$
a contradiction. But then, for any  $t\in S^{k-1}_+\setminus S^{k-1}_\ast$ and $u_1,\dots,u_k\in S^{d-1}$, we have
$$\|\ul{tu}|L^\perp\|^2=\frac 1k\sum_{i<j}\|t_iu_i-t_ju_j\|^2\ge \frac 1k\left(\frac 1{\sqrt{k}} -\frac 1{2\sqrt{k}}\right)^2=\frac 1{4k^2},$$
which proves the first assertion.

(2) Now we assume that $t\in  S^{k-1}_\ast$. Let $i<j$. We distinguish two cases.

(a) If $\langle u_i,u_j\rangle \ge 0$, then
\begin{align*}
\|t_iu_i-t_ju_j\|^2 & = t_i^2+t_j^2-2t_it_j\langle u_i,u_j\rangle\\
&\ge t_i^2+t_j^2-(t_i^2+t_j^2)\langle u_i,u_j\rangle\\
&= (t_i^2+t_j^2) [1-\langle u_i,u_j\rangle]\\
&= \frac{1}{2}(t_i^2+t_j^2)\|u_i-u_j\|^2\\
&\ge \frac{1}{4k}\|u_i-u_j\|^2.
\end{align*}

(b) If $\langle u_i,u_j\rangle < 0$, then
\begin{align*}
\|t_iu_i-t_ju_j\|^2 & = t_i^2+t_j^2+2t_it_j(-\langle u_i,u_j\rangle)\\
&\ge \frac{1}{2k} +2\cdot\frac{1}{4k}(-\langle u_i,u_j\rangle)\\
&= \frac{1}{4k}\|u_i-u_j\|^2.
\end{align*}

Hence $\|t_iu_i-t_ju_j\|^2 \ge \frac{1}{4k}\|u_i-u_j\|^2$ for any $i<j$, from which the second assertion  follows.
\end{proof}

From \eqref{uppbound} and Lemma \ref{neulem}, we get
\begin{align*}
G&\le c\, 4k \int_{S^{k-1}_\ast} \|\ul{tu}|L^\perp\|^{2-(k-1)d}\,\Ha^{k-1}(dt)\\
& \qquad + c\, \|\ul{u}|L^\perp\|^{2} \int_{S^{k-1}_+\setminus S^{k-1}_\ast} (2k)^{(k-1)d}\,\Ha^{k-1}(dt),
\end{align*}
and the latter summand is bounded from above by a constant. Hence, we obtain
\begin{align*}
&\int_{F^\perp(d,d-1-n_1)}\cdots\int_{F^\perp(d,d-1-n_k)}|F_n(u_1,\dots, u_k) \varphi_n(u_1,U_1,\dots ,u_k,U_k)|\\
&\quad\times \Omega_{n_k}(K_k;d(u_k,U_k))\cdots \Omega_{n_1}(K_1;d(u_1,U_1))\\
&\le {\rm const.} \int_{S^{d-1}}\cdots\int_{S^{d-1}}\int_{S^{k-1}_\ast} \|\ul{tu}|L^\perp\|^{2-(k-1)d}\,\Ha^{k-1}(dt)\\
&\quad\times S_{n_k}(K_k,du_k)\cdots S_{n_1}(K_1,du_1)+ {\rm const.}\,,
\end{align*}
and we have to show, in each of the three cases (a), (b) und (c), that the latter integral is finite.

Let us first consider case (a). We apply independent uniform random orthogonal transformations $R_i\in O(d)$ to the bodies $K_i$, $i=2,\dots,k$, and observe that the mean area measure ${\mathbb E} S_{n_i}(R_iK_i,\cdot)$ is a finite rotation invariant measure on $S^{d-1}$. Using the upper bound for $G$, we see that it is sufficient to show that
\begin{equation}  \label{finite2}
\int_{S^{d-1}}\dots\int_{S^{d-1}}\int_{S^{k-1}_*} \|\ul{tu}|L^\perp\|^{2-(k-1)d}\,\Ha^{k-1}(dt)\, \Ha^{d-1}(du_k)\cdots \Ha^{d-1}(du_2)<\infty.
\end{equation}
Note that the last expression is  independent of $u_1\in S^{d-1}$. Hence, \eqref{finite2} is equivalent to
\begin{equation}  \label{finite3}
\int_{S^{d-1}}\dots\int_{S^{d-1}}\int_{S^{k-1}_\ast} \|\ul{tu}|L^\perp\|^{2-(k-1)d}\,\Ha^{k-1}(dt)\, \Ha^{d-1}(du_k)\cdots \Ha^{d-1}(du_1)<\infty.
\end{equation}
The mapping $g:(t,\ul{u})\mapsto\ul{tu}$ is one-to-one on $S^{k-1}_\ast\times(S^{d-1})^k$, its image is
$$S^{kd-1}_\Delta:=\left\{\ul{v}=(v_1,\dots,v_k)\in S^{kd-1}:\, \|v_i\|^2\geq\frac 1{4k},\, i=1,\dots,k\right\}$$
and the inverse map $h:=g^{-1}$ fulfills
\begin{align*}
\|h(\ul{v})-h(\ul{w})\|^2
&=\sum_{i=1}^k(\|v_i\|-\|w_i\|)^2+\sum_{i=1}^k\left\|\frac{v_i}{\|v_i\|}-\frac{w_i}{\|w_i\|}\right\|^2\\
&\leq \|\ul{v}-\ul{w}\|^2+\sum_{i=1}^k\frac 4{\|v_i\|^2}\|v_i-w_i\|^2\\
&\leq (1+16k)\|\ul{v}-\ul{w}\|^2,
\end{align*}
for $ \ul{v},\ul{w}\in S^{kd-1}_\Delta$.
Hence, $h$ is $\sqrt{1+16k}$-Lipschitz and its approximate Jacobian is bounded by ${\rm Lip}:=(1+16k)^{(kd-1)/2}$ from above. Consequently, the coarea formula yields
\begin{eqnarray*}
\lefteqn{\int_{S^{d-1}}\dots\int_{S^{d-1}}\int_{S^{k-1}_\ast} \|\ul{tu}|L^\perp\|^{2-(k-1)d}\,\Ha^{k-1}(dt)\, \Ha^{d-1}(du_k)\dots \Ha^{d-1}(du_1)}\hspace{2cm}\\
&=&\int_{S^{kd-1}_\Delta}\|\ul{v}|L^\perp\|^{2-(k-1)d}\, J_{kd-1}h(\ul{v})\, \Ha^{kd-1}(d\ul{v})\\
&\leq& {\rm Lip} \int_{S^{kd-1}}\|\ul{v}|L^\perp\|^{2-(k-1)d}\, \Ha^{kd-1}(d\ul{v}).
\end{eqnarray*}
The last integral is bounded by Lemma~\ref{fact}, hence \eqref{finite3} holds.

The case (b) is a consequence of \eqref{finite3}, since we may assume that $K_2,\dots ,K_k$ have  support functions of class $C^{1,1}$, and this implies that $S_{n_i}(K_i,\cdot )\le c_i\Ha^{d-1}$, with some constants $c_i, i=2,\dots ,k$ (this follows, for example, from \cite[Lemma 1]{HRW} together with \cite[Theorem 4.7]{Weil73}).

Finally, we treat case (c). Let $K_1,\dots ,K_k$ be convex polytopes in
general $(n_1,\dots ,n_k)$-position. Then we have
$$
\bigcap_{i=1}^k n(K_i,F_i) = \emptyset
$$
for all faces $F_i\in {\mathcal F}_{n_i}(K_i)$, where $n(K_i,F_i) = N(K_i,F_i) \cap S^{d-1}$, $i=1,\dots ,k$.
Consider the function
$$
f : S^{k-1}_{*} \times \bigtimes_{i=1}^kn(K_i,F_i) \to [0,\infty ) ,\quad (t,\ul{u}) \mapsto \|\ul{tu}|L^\perp\| .
$$
Clearly, $f$ is continuous and the domain of $f$ is compact. Moreover, $f>0$, since $f(t,\ul{u}) = 0$ implies that $t_iu_i=t_ju_j$ for all $i<j$, hence $t_i=t_j$ for all $i<j$. This yields $t_1=\dots =t_k=\frac 1{\sqrt{k}}$, and so $u_1 = \dots =u_k$ would be in $\bigcap_{i=1}^k n(K_i,F_i)$, a contradiction.

We obtain
$$
f(t,\ul{u})\ge \ve_0
$$
for some constant $\ve_0>0$ and all $(t,\ul{u})\in S^{k-1}_{*} \times \bigtimes_{i=1}^kn(K_i,F_i)$, and hence
\begin{align*}
\int_{S^{d-1}}&\cdots\int_{S^{d-1}}\int_{S^{k-1}_\ast} \|\ul{tu}|L^\perp\|^{2-(k-1)d}\,\Ha^{k-1}(dt)\\
&\quad\times S_{n_k}(K_k,du_k)\cdots S_{n_1}(K_1,du_1)\\
&={\rm const.} \sum_{F_1\in{\cal F}_{n_1}(K_1)}\cdots \sum_{F_k\in{\cal F}_{n_k}(K_k)}  \prod_{i=1}^k \Ha^{n_i}(F_i)\\
&\quad\times \int_{n(K_1,F_1)}\cdots\int_{n(K_k,F_k)}\int_{S^{k-1}_\ast} \|\ul{tu}|L^\perp\|^{2-(k-1)d}\,\Ha^{k-1}(dt)\\
&\quad\times\Ha^{d-1-n_k}(du_k)\cdots \Ha^{d-1-n_1}(du_1)\\
& <\infty ,
\end{align*}
since the integrand is bounded from above.

This concludes the proof of Theorem \ref{mainthm}, in each of the three cases.
\end{proof}

\section{Mixed translative functionals}\label{6}

We now consider, for $k\ge 2$, $j\in\{0,\dots,d-1\}$ and $r_1,\dots ,r_k\in \{ j,\dots, d\}$ with $r_1+\dots +r_k=(k-1)d+j$, a flag representation of the mixed functional $V_{r_1,\dots ,r_k}(K_1,\dots ,K_k)$. It is based on the following lemma, which  is the result corresponding to Lemma \ref{Lem}.

\begin{Lemma}\label{Lem2}
Let $u_1,\ldots,u_k\in S^{d-1}$ and $1\leq r_1,\ldots,r_k\leq d-1$ be given so that $r_1+\cdots+r_k{\geq}(k-1)d$. Then there exists a continuous function
$$\Psi_{u_1,\ldots,u_k}:\, G^{u_1^\perp}(d-1,d-1-r_1)\times\cdots\times G^{u_k^\perp}(d-1,d-1-r_k)\to\R$$
such that for all $A_1\in G^{u_1^\perp}(d-1,d-1-r_1),\ldots,A_k\in G^{u_k^\perp}(d-1,d-1-r_k)$,
\begin{align*}
\int\cdots\int\Psi_{u_1,\ldots,u_k}(U_1,\ldots,U_k)\, &\langle U_1,A_1\rangle^2 \cdots \langle U_k,A_k\rangle^2 \,
dU_1\cdots dU_k\\
&=\|A_1\wedge u_1\wedge\cdots\wedge A_k\wedge u_k\|^2,
\end{align*}
where  $dU_i=\nu^{d-1}_{d-1-r_i}(dU_i)$ denotes the  integration over $U_i\in G^{u_i^\perp}(d-1,d-1-r_i)$ with respect to the Haar probability measure, and the subspaces $A_i$  on the right-hand side of the above equation are considered as the associated unit simple multivectors.
\end{Lemma}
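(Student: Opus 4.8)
The plan is to follow the proof of Lemma~\ref{Lem} almost verbatim, the only genuinely new feature being the extra unit vectors $u_1,\dots ,u_k$ now present in the wedge product on the right-hand side. Abbreviating $j_i:=d-1-r_i$ (so that $A_i,U_i\in G^{u_i^\perp}(d-1,j_i)$) and $j:=r_1+\cdots +r_k-(k-1)d\ge 0$, one has $j_1+\cdots +j_k=d-k-j\ge 0$, and the target multivector $A_1\wedge u_1\wedge\cdots\wedge A_k\wedge u_k$ has degree $d-j\le d$. For each $U_i$ I would fix an orthonormal basis $\{v^i_1,\dots ,v^i_{d-1}\}$ of $u_i^\perp$ with $U_i=\Lin\{v^i_1,\dots ,v^i_{j_i}\}$, write $V^i_I=\bigwedge_{l\in I}v^i_l$, and define for $0\le p_i\le j_i\wedge(d-1-j_i)$ the building blocks
\[
\Psi^{p_1,\dots ,p_k}_{u_1,\dots ,u_k}(U_1,\dots ,U_k):=\sum_{I_1\in\cI^1_{p_1}}\cdots\sum_{I_k\in\cI^k_{p_k}}\bigl\|V^1_{I_1}\wedge u_1\wedge\cdots\wedge V^k_{I_k}\wedge u_k\bigr\|^2,
\]
with $\cI^i_{p_i}$ exactly as in Lemma~\ref{Lem}. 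I then seek $\Psi_{u_1,\dots ,u_k}=\sum_{p_1}\cdots\sum_{p_k}a_{p_1,\dots ,p_k}\,\Psi^{p_1,\dots ,p_k}_{u_1,\dots ,u_k}$, the coefficients $a_{p_1,\dots ,p_k}$ being fixed by the same recursive procedure as before.

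The heart of the argument is the integration over $U_1,\dots ,U_k$, performed one variable at a time. Holding the blocks with index $\ge 2$ fixed, write $\xi:=V^2_{I_2}\wedge u_2\wedge\cdots\wedge V^k_{I_k}\wedge u_k$ and let $W$ be the subspace associated with $\xi$. The decomposition step of Lemma~\ref{Lem} must be replaced by
\[
\bigl\|V^1_{I_1}\wedge u_1\wedge\xi\bigr\|^2=\bigl\|V^1_{I_1}\wedge p_{u_1^\perp}(W)\bigr\|^2\,\|u_1\wedge\xi\|^2,
\]
which I would prove by grouping $u_1$ with $\xi$, using $V^1_{I_1}\subset u_1^\perp$ to pass from $\Lin(u_1)\oplus W$ to its intersection $p_{u_1^\perp}(W)$ with $u_1^\perp$, and noting $\|u_1\wedge\xi\|^2=\|p_{W^\perp}u_1\|^2\,\|\xi\|^2$; both sides vanish when $u_1\in W$. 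This isolates the $U_1$-dependence in $\sum_{I_1\in\cI^1_{p_1}}\|V^1_{I_1}\wedge p_{u_1^\perp}(W)\|^2$, a quantity of the type controlled in \cite[Section~5]{HRW}. Integrating it against $\langle U_1,A_1\rangle^2\,dU_1$ and applying \cite[Lemma~4]{HRW} yields a combination $\sum_{q_1}d^{d-1,j_1}_{p_1,q_1}(\cdots)$ whose $q_1=0$ contribution recombines, by the displayed identity applied to $A_1$ in place of $V^1_{I_1}$, to $\|A_1\wedge u_1\wedge\xi\|^2$; thus $V^1_{I_1}$ is replaced by $A_1$ and the sum over $I_1$ is removed, leaving an expression of the same form for the next integration.

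Exactly as in Lemma~\ref{Lem}, requiring that all $q_i\neq 0$ contributions cancel leads to the conditions
\[
\sum_{p_i}a_{p_i,\dots ,p_k}\,d^{d-1,j_i}_{p_i,q_i}=a_{p_{i+1},\dots ,p_k}\,\delta_{q_i,0},\qquad q_i=0,\dots ,j_i\wedge(d-1-j_i),
\]
for $i=1,\dots ,k$. Since the matrices $D^{d-1}_{j_i}=(d^{d-1,j_i}_{p,q})$ are regular by \cite[Proposition~1]{HRW}, I would solve these recursively by setting $a_{p_i,\dots ,p_k}=a^i_{p_i}\cdots a^k_{p_k}$ with $(a^i_0,\dots ,a^i_{j_i\wedge(d-1-j_i)})=(1,0,\dots ,0)(D^{d-1}_{j_i})^{-1}$. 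Iterating the integration over $i=1,\dots ,k$ then reduces the full norm $\|V^1_{I_1}\wedge u_1\wedge\cdots\|^2$ to $\|A_1\wedge u_1\wedge\cdots\wedge A_k\wedge u_k\|^2$, as required, and the continuity of $\Psi_{u_1,\dots ,u_k}$ follows from that of each $\Psi^{p_1,\dots ,p_k}_{u_1,\dots ,u_k}$.

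The main obstacle I anticipate is the decomposition step together with the Grassmannian integration behind it. In Lemma~\ref{Lem} the factors $V^1_{I_1}$ and $p_{u_1^\perp}(V^\perp)$ had complementary dimensions in $u_1^\perp$, so the wedge collapsed to the scalar product $\langle V^1_{I_1},p_{u_1^\perp}(V^\perp)\rangle^2$ and \cite[Lemma~4]{HRW} applied directly. Here $\dim V^1_{I_1}+\dim p_{u_1^\perp}(W)=d-1-j$, which is strictly smaller than $d-1$ whenever $j>0$, so the genuinely non-complementary wedge $\|V^1_{I_1}\wedge p_{u_1^\perp}(W)\|^2$ must be handled through the version of the $\ell$th product and of \cite[Lemma~4]{HRW} valid for subspaces of non-equal dimension. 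Verifying that the coefficient matrices arising there coincide with the regular matrices $D^{d-1}_{j_i}$ of \cite[Proposition~1]{HRW} is the delicate point, and is precisely where the translative setting departs from that of Lemma~\ref{Lem}.
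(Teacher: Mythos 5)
Your reduction step is correct as far as it goes: the identity $\|V^1_{I_1}\wedge u_1\wedge\xi\|^2=\|V^1_{I_1}\wedge p_{u_1^\perp}(W)\|^2\,\|u_1\wedge\xi\|^2$ does hold (since $u_1$ lies in $\Lin(u_1)+W$ and is orthogonal to $(\Lin(u_1)+W)\cap u_1^\perp=p_{u_1^\perp}(W)$, one can split off $u_1$ orthogonally), and in the case $j:=r_1+\cdots+r_k-(k-1)d=0$ it is exactly the paper's identity \eqref{E2}, because then $\dim p_{u_1^\perp}(W)=r_1$ is complementary to $\dim U_1=d-1-r_1$ inside $u_1^\perp$, so that $\|V^1_{I_1}\wedge p_{u_1^\perp}(W)\|=|\langle V^1_{I_1},W^\perp\cap u_1^\perp\rangle|$ and the machinery of \cite[Section~5]{HRW} applies verbatim. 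For $j=0$ your argument therefore coincides with the paper's and is complete.

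For $j>0$, however, the point you flag as ``delicate'' is a genuine gap, not a verification to be deferred. When $j>0$ the quantity $\sum_{I_1\in\cI^1_{p_1}}\|V^1_{I_1}\wedge p_{u_1^\perp}(W)\|^2$ is not an $\ell$th product in the sense of \cite[Section~5]{HRW}: those products, and the expansion in \cite[Lemma~4]{HRW} with the regular matrices $D^{d-1}_{j_1}$ of \cite[Proposition~1]{HRW}, are defined and proved only for pairs of subspaces of $u_1^\perp$ of \emph{equal} dimension, and since $\dim p_{u_1^\perp}(W)=r_1-j$ no orthocomplementation within $u_1^\perp$ restores equal degrees ($\dim V^1_{I_1}+\dim p_{u_1^\perp}(W)=d-1-j<d-1$). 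No mixed-dimension version of \cite[Lemma~4]{HRW} is available in the cited sources, you do not prove one, and your expectation that its coefficient matrix would again be $D^{d-1}_{j_1}$ is an unverified conjecture; moreover the same missing lemma would be needed at each of the $k$ integration steps, since the dimension deficit $j$ persists throughout the iteration. The paper sidesteps this obstacle entirely by a lifting trick: it sets $r_{k+1}:=d-j$, so that $r_1+\cdots+r_{k+1}=kd$ and the already-established complementary case applies to the $k+1$ directions $u_1,\ldots,u_{k+1}$; it then integrates the resulting identity over $(u_{k+1},A_{k+1})$ with respect to $\nu^{d-1}_{d-1-r_{k+1}}(dA_{k+1})\,\omega_d^{-1}\Ha^{d-1}(du_{k+1})$, which is the invariant measure $\nu^d_{d-r_{k+1}}$ on $G(d,d-r_{k+1})$, so that by rotation invariance the auxiliary variables contribute only the constant $\binom{d}{j}^{-1}$ times $\|A_1\wedge u_1\wedge\cdots\wedge A_k\wedge u_k\|^2$; multiplying the averaged kernel by $\binom{d}{j}$ yields the desired $\Psi_{u_1,\ldots,u_k}$, which is then checked to retain the form \eqref{Psi}. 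To repair your proof for $j>0$ you should replace the projected-subspace expansion by this averaging argument.
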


\begin{proof}
Put $j:=r_1+\dots +r_k-(k-1)d$. We shall first consider the case $j=0$.
The proof proceeds similarly as that of Lemma~\ref{Lem}.
For given subspaces $U_i\in G^{u_i^\perp}(d-1,d-1-r_i)$,
choose orthonormal bases $\{ v^i_1,\ldots,v^i_{d-1}\}$ of $u_i^\perp$ so that
$$U_i=\Lin\{ v^i_1,\ldots,v^i_{d-1-r_i}\},\quad i=1,\ldots,k.$$
For numbers $0\leq p_i\leq r_i\wedge(d-1-r_i)$, define the function
$$\Psi_{u_1,\ldots,u_k}^{p_1,\ldots,p_k}(U_1,\ldots,U_k)=\sum_{I_1\in\cI^1_{p_1}}\dots\sum_{I_k\in\cI^k_{p_k}}
\left\|V^1_{I_1}\wedge u_1\wedge\dots\wedge V^k_{I_k}\wedge u_k\right\|^2,$$
where $V^i_I=\bigwedge_{l\in I}v^i_l$ and $\cI^i_{p_i}$ denotes the family of all index sets $I\subset\{1,\ldots,d-1\}$ with $|I|=d-1-r_i$ and $|I\cap\{1,\ldots,d-1-r_i\}|=d-1-r_i-p_i$, $i=1,\ldots, k$. Again, $\Psi_{u_1,\ldots,u_k}^{p_1,\ldots,p_k}(U_1,\ldots,U_k)$ is a function of the subspaces $U_1,\ldots,U_k$ and does not depend on the choice of the orthonormal bases, as can be seen from the argument below.

We shall show that the function
\begin{equation}   \label{Psi}
\Psi_{u_1,\ldots,u_k}(U_1,\ldots,U_k)=\sum_{p_1}\dots\sum_{p_k}a_{p_1,\ldots,p_k}\Psi_{u_1,\ldots,u_k}^{p_1,\ldots,p_k}(U_1,\ldots,U_k)
\end{equation}
fulfills the requirement of the lemma for suitably chosen coefficients $a_{p_1,\ldots,p_k}$. Here and in the sequel, the summation over $p_i$ runs from $0$ to $r_i\wedge(d-1-r_i)$,  $i=1,\ldots,k$.

Denote $\xi:=V^2_{I_2}\wedge u_2\wedge\dots\wedge V^k_{I_k}\wedge u_k$ and let $V\in G(d,r_1)$ be the linear subspace associated with $\xi$ if $\xi\neq 0$. If $\xi=0$, we choose $V\in G(d,r_1)$ arbitrarily.  We have
\begin{equation}  \label{E2}
\|V^1_{I_1}\wedge u_1\wedge\dots\wedge V^k_{I_k}\wedge u_k\|^2=\langle V^1_{I_1},V^\perp\cap u_1^\perp\rangle^2\|u_1\wedge\xi\|^2 .
\end{equation}
Thus we can write
\begin{eqnarray*}
\Psi_{u_1,\ldots,u_k}^{p_1,\ldots,p_k}(U_1,\ldots,U_k)&=&
\sum_{I_1\in\cI^1_{p_1}}\dots\sum_{I_k\in\cI^k_{p_k}} \langle V^1_{I_1},V^\perp\cap u_1^\perp\rangle^2\|u_1\wedge \xi\|^2\\
&=&\sum_{I_2\in\cI^2_{p_2}}\dots\sum_{I_k\in\cI^k_{p_k}} \langle U_1,V^\perp\cap u_1^\perp\rangle_{p_1}^2\|u_1\wedge \xi\|^2,
\end{eqnarray*}
where again we refer to \cite[Section 5]{HRW} for a definition and the basic properties of the product $\langle\cdot,\cdot\rangle_{p_1}$.
Integrating over $G^{u_1^\perp}(d-1,d-1-r_1)$, we obtain by \cite[Lemma~4]{HRW}
\begin{eqnarray*}
\lefteqn{\int \Psi_{u_1,\ldots,u_k}^{p_1,\ldots,p_k}(U_1,\ldots,U_k)\langle U_1,A_1\rangle^2\, dU_1}\\
&=&\sum_{q_1= 0}^{r_1\wedge(d-1-r_1)}d^{d-1,d-1-r_1}_{p_1,q_1}\sum_{I_2\in\cI^2_{p_2}}\dots\sum_{I_k\in\cI^k_{p_k}} \langle A_1,V^\perp\cap u_1^\perp\rangle_{q_1}^2\|u_1\wedge \xi\|^2
\end{eqnarray*}
with constants $d^{d-1,d-1-r_1}_{p_1,q_1}$.
If the coefficients $a_{p_1,\ldots,p_k}$ fulfill
$$\sum_{p_1}a_{p_1,\ldots,p_k}d^{d-1,d-1-r_1}_{p_1,q_1}=a_{p_2,\ldots,p_k}\text{ if }q_1=0\text{ and }0\text{ otherwise},$$
we get
\begin{align}
&{\int \Psi_{u_1,\ldots,u_k}(U_1,\ldots,U_k)\langle U_1,A_1\rangle^2\, dU_1} \nonumber \\
&=\sum_{p_2}\dots\sum_{p_k}a_{p_2,\ldots,p_k}\sum_{I_2\in\cI^2_{p_2}}\dots\sum_{I_k\in\cI^k_{p_k}} \langle A_1,V^\perp\cap u_1^\perp\rangle^2\|u_1\wedge \xi\|^2\label{1int}\\
&=\sum_{p_2}\dots\sum_{p_k}a_{p_2,\ldots,p_k}\sum_{I_2\in\cI^2_{p_2}}\dots\sum_{I_k\in\cI^k_{p_k}} \| A_1\wedge u_1\wedge V^2_{I_2}\wedge u_2\wedge\dots\wedge V^k_{I_k}\wedge u_k\|^2 \nonumber
\end{align}
(we have used \eqref{E2} again in the last step), which remains true if $\xi=0$.
Continuing in the same way the integration with respect to $U_2,\ldots,U_k$, we get the desired solution, provided that
$$
\sum_{p_i}a_{p_i,\ldots,p_k}d^{d-1,d-1-r_i}_{p_i,q_i}=a_{p_{i+1},\ldots,p_k}\delta_{q_i,0},\quad q_i=0,\ldots,
r_i\wedge(d-1-r_i),
$$
for $ i=1,\ldots, k$.
The coefficients $a_{p_1,\ldots,p_k}$ can be found as in the proof of Lemma~\ref{Lem}.

It remains to treat the case $j>0$. Setting $r_{k+1}:=d-j$, we know by the first part of the proof
that for any $u_{k+1}\in S^{d-1}$ and any $A_i\in G^{u_i^\perp}(d-1,d-1-r_i)$, $i=1,\dots,k+1$, we have
\begin{align*}
 &\int\cdots\int\Psi_{u_1,\ldots,u_{k+1}}(U_1,\ldots,U_{k+1})\, \langle U_1,A_1\rangle^2 \cdots
\langle U_{k+1},A_{k+1}\rangle^2 \,
dU_1\dots dU_{k+1}\\
&\qquad \qquad=\|A_1\wedge u_1\wedge\cdots\wedge A_{k+1}\wedge u_{k+1}\|^2.
\end{align*}
If we integrate the expression on the right side with respect to the measure
$$\nu^{d-1}_{d-1-r_{k+1}}(dA_{k+1})\,\omega_d^{-1}\Ha^{d-1}(du_{k+1}),$$
which is a normalized invariant measure on $G(d,d-r_{k+1})$ and which thus agrees with $\nu^d_{d-r_{k+1}}$, we get
\begin{align}
&{\|A_1\wedge u_1\wedge\dots\wedge A_k\wedge u_k\|^2 \int_{G(d,d-r_{k+1})}\langle(A_1\wedge u_1\wedge\dots\wedge A_k\wedge u_k)^\perp, W\rangle^2\, dW}\nonumber\\
&\qquad =\binom{d}{j}^{-1}\|A_1\wedge u_1\wedge\dots\wedge A_k\wedge u_k\|^2 .\label{neu2eq}
\end{align}
Hence, the function
\begin{align*}
&{\Psi_{u_1,\dots,u_k}: (U_1,\dots,U_k)}\\
&\qquad\mapsto\binom{d}{j} \int_{S^{d-1}}\int_{G^{u_{k+1}^\perp}(d-1,d-1-r_{k+1})}\int_{G(d,d-r_{k+1})}
\Psi_{u_1,\ldots,u_{k+1}}(U_1,\ldots,U_{k+1})\\
&\qquad\qquad\qquad\times\langle U_{k+1},A_{k+1}\rangle^2\, dU_{k+1}\,dA_{k+1}\,\omega_d^{-1}\Ha^{d-1}(du_{k+1})
\end{align*}
fulfills the desired property. Moreover, we claim that it has again the form \eqref{Psi}.
Indeed, applying \eqref{1int} to $\Psi_{u_1,\dots,u_{k+1}}$ and the index $k+1$, we get
\begin{align*}
&{\int_{G(d,d-r_{k+1})} \Psi_{u_1,\ldots,u_{k+1}}(U_1,\ldots,U_{k+1})\,\langle U_{k+1},A_{k+1}\rangle^2\, dU_{k+1}}\\
&\qquad=\binom{d}{j}\sum_{p_1}\dots\sum_{p_k}a_{p_1,\ldots,p_k}\sum_{I_1\in\cI^1_{p_1}}\dots\sum_{I_k\in\cI^k_{p_k}}\\
&\qquad\qquad \times
\| V^1_{I_1}\wedge u_1\wedge\dots\wedge V^k_{I_k}\wedge u_k\wedge A_{k+1}\wedge u_{k+1} \|^2,
\end{align*}
and then performing the integration $dA_{k+1}\,\omega_d^{-1}\Ha^{d-1}(du_{k+1})$
and using the same argument as in \eqref{neu2eq}, we arrive at the form \eqref{Psi}.
\end{proof}

In order to prove a flag formula for mixed functionals, we first need a curvature representation, as in the case of mixed volumes. For the mixed (translative) functionals this has been obtained in \cite{HR}, in a local version and for sets of positive reach. Here, we only need the global version for convex bodies (we will come back to the local result in the next section). In the following, we put $r:= (r_1,\dots, r_k)$ and $j:=r_1+\cdots+r_k-(k-1)d\in\{0,\ldots,d-1\}$. Then this formula reads
\begin{eqnarray}\label{mixfunc}
&&V_{r}(K_1,\dots,K_k)\nonumber\\
&&\quad =\int_{\nor (K_1)\times\dots\times\nor (K_k)}G_{r}(u_1,\dots,u_k)\sum_{|I_1|=r_1,\dots,|I_k|=r_k}\left(\prod_{i=1}^k\bK_{I_i^c}(K_i;x_i,u_i)\right)\\
&&\quad\quad\times\left[ A_{I_1}(K_1;x_1,u_1) ,\ldots,A_{I_k}(K_k;x_k,u_k)
\right]^2\, \Ha^{k(d-1)}(d(x_1,u_1,\dots,x_k,u_k)),\nonumber
\end{eqnarray}
where
\begin{align}\label{G-function}
G_{r}(u_1,\dots,u_k) &:=\frac{1}{\omega_{d-j}}\int_{S^{k-1}_+}\left(\prod_{i=1}^kt_i^{d-1-r_i}\right)\left\|\sum_{i=1}^k t_iu_i\right\|^{-(d-j)}\Ha^{k-1}(dt)
\end{align}
for linearly independent $u_1,\dots ,u_k$ (and $G_r(u_1,\dots ,u_k)=0$ otherwise), and where
$$
\left[ A_{I_1}(K_1;x_1,u_1) ,\ldots,A_{I_k}(K_k;x_k,u_k)
\right]=\left\|\bigwedge_{i=1}^k A_{I_i^c}(K_i;x_i,u_i)\wedge u_1\wedge\cdots\wedge u_k
\right\|
$$
is the subspace determinant associated with the subspaces corresponding to $A_{I_i}(K_i;x_i,u_i)$, $i=1,\ldots,k$ (see
\cite[Section 2]{HR} for further references).

Note that the condition, which was imposed in \cite[Theorem 2]{HR} on the sets $K_1,\dots ,K_k$, is fulfilled for convex bodies, as was explained in \cite[Remark 1 (b)]{HR}.

As in the case of Theorem \ref{mainthm}, for $\ve>0$, we  introduce the bounded $\ve$-approximation
\begin{align*}
&V^{(\varepsilon)}_{r}(K_1,\ldots,K_k)\\
&\quad :=\int_{\nor (K_1)\times\dots\times\nor (K_k)}G_{r}^{(\ve)}(u_1,\dots,u_k)\sum_{|I_1|=r_1,\dots,|I_k|=r_k}\left(\prod_{i=1}^k\bK_{I_i^c}(K_i;x_i,u_i)\right)\\
&\quad\quad\times\left[ A_{I_1}(K_1;x_1,u_1) ,\ldots,A_{I_k}(K_k;x_k,u_k)
\right]^2\, \Ha^{k(d-1)}(d(x_1,u_1,\dots,x_k,u_k)),
\end{align*}
where now
$$
G^{(\varepsilon)}_r(u_1,\dots ,u_k) := G_r(u_1,\dots ,u_k) {\bf 1}\{ d(0,{\rm conv} \{u_1,\ldots,u_k\})\ge \ve\} .
$$
Then, $G^{(\varepsilon)}_r$ is nonnegative and  bounded from above on $(S^{d-1})^k$, since
$$
\|\sum_{i=1}^kt_iu_i\|\ge \|\sum_{i=1}^k(t_i/t_*)u_i\|\ge d(0,{\rm conv} \{u_1,\ldots,u_k\})\ge \ve
$$
with $t_*:=\sum_{i=1}^kt_i\ge \sum_{i=1}^kt_i^2=1$.

We put
$$
\psi_r(u_1,U_1,\ldots,u_k,U_k):=\tilde c(d,r)^{-1}\Psi_{u_1,\dots ,u_k }(U_1,\dots ,U_k)
$$
and $\tilde c(d,r):= \gamma (d,r_1)\cdots \gamma (d,r_k)$.
From \eqref{ECM} and Lemma \ref{Lem2}, we then get
\begin{align*}
&\int_{F^\perp(d,d-1-r_1)}\cdots\int_{F^\perp(d,d-1-r_k)}G^{(\varepsilon)}_r(u_1,\dots, u_k)
\psi_r(u_1,U_1,\ldots,u_k,U_k)\\
&\quad\times \Omega_{r_k}(K_k;d(u_k,U_k))\cdots \Omega_{r_1}(K_1;d(u_1,U_1))\\
&\ =  \int_{\nor(K_1)}\cdots\int_{\nor (K_k)}G^{(\varepsilon)}_r(u_1,\dots,u_k)\\
&\quad\times \sum_{|I_1|=d-1-r_1}\cdots \sum_{|I_k|=d-1-r_k} {\mathbb K}_{I_1}(K_1;x_1,u_1)\cdots
{\mathbb K}_{I_k}(K_k;x_k,u_k)\\
&\quad\times \int_{G^{u_1^\perp}(d-1,d-1-r_1)}\cdots\int_{G^{u_k^\perp}(d-1,d-1-r_k)}
\Psi_{u_1,\ldots,u_k}(U_1,\dots ,U_k)\,\\
&\quad\times\prod_{i=1}^k \langle U_i,A_{I_i}(K_i;x_i,u_i)\rangle^2\, dU_k\cdots dU_1 \,
 {\cal H}^{d-1}(d(x_k,u_k)\cdots {\cal H}^{d-1}(d(x_1,u_1))\\
&=  \int_{\nor(K_1)}\cdots\int_{\nor (K_k)}G^{(\varepsilon)}_r(u_1,\dots,u_k)\\
&\quad\times \sum_{|I_1|=d-1-r_1}\cdots \sum_{|I_k|=d-1-r_k}
{\mathbb K}_{I_1}(K_1;x_1,u_1)\cdots  {\mathbb K}_{I_k}(K_k;x_k,u_k)\\
&\quad\times \| A_{I_1}(K_1;x_1,u_1)\wedge u_1\wedge\dots\wedge A_{I_k}(K_k;x_k,u_k)\wedge u_k\|^2 \\
&\quad \times
{\cal H}^{d-1}(d(x_k,u_k))\cdots {\cal H}^{d-1}(d(x_1,u_1))\\
&=  \int_{\nor(K_1)}\cdots\int_{\nor (K_k)}G^{(\varepsilon)}_r(u_1,\dots,u_k) \sum_{|I_1|=r_1}\cdots \sum_{|I_k|=r_k}
\\
&\quad\times {\mathbb K}_{I_1^c}(K_1;x_1,u_1)\cdots
 {\mathbb K}_{I_k^c}(K_k;x_k,u_k)\left[ A_{I_1}(K_1;x_1,u_1) ,\ldots,A_{I_k}(K_k;x_k,u_k)
\right]^2 \\
&\quad\times {\cal H}^{d-1}(d(x_k,u_k))\cdots {\cal H}^{d-1}(d(x_1,u_1))\\
&=   V^{(\varepsilon)}_r(K_1,\ldots,K_k).
\end{align*}

The following theorem is the analog of Theorem \ref{mainthm} for mixed functionals. Also here, a condition of general position is needed. The cases (a) and (b) remain the same, but the notion of general position for polytopes has to be adapted. For $r_1,\dots, r_k$ with $r_1+\dots +r_k\ge (k-1)d$, we say that convex polytopes $K_1,\dots ,K_k\subset\rd$
are in general $(r_1,\dots ,r_k)$-position if
\begin{equation}\label{conda}
0\notin\text{conv}\{u_1,\ldots,u_k\}
\end{equation}
whenever
$u_i\in n(K_i,F_i)$ and  $F_i\in \mathcal{F}_{r_i}(K_i)$ for $i=1,\dots ,k$. Note that for $k=2$ and $r_1+r_2=d$ (where mixed functionals and mixed volumes are the same, up
to reflection of one of the bodies and a constant), the definition is consistent with the one used in Section \ref{5} (if we reflect one of the bodies).

\begin{Theorem}\label{mainthm2}
Let $K_1, \dots ,K_k\subset\rd$  be convex bodies in $\rd$, and let
$r=(r_1,\dots ,r_k) \in\{1,\ldots,d-1\}^k$ with $r_1+\dots +r_k\ge (k-1)d$. Then, there is a continuous function $\psi_{r}$ on $F^\perp(d,d-1-r_1)\times\cdots\times F^\perp(d,{d-1-r_k})$ (independent of $K_1,\dots ,K_k$) such that
\begin{align}\label{flagrep2}
&V_{r}(K_1,\ldots,K_k)\ = \int_{F^\perp(d,{d-1-r_k})}\cdots\int_{F^\perp(d,d-1-r_1)} G_r(u_1,\dots ,u_k)\nonumber \\
&\quad \times\psi_r(u_1,U_1,\dots, u_k,U_k)\, \Omega_{r_1}(K_1;d(u_1,U_1))\cdots \Omega_{r_k}(K_k;d(u_k,U_k))
\end{align}
holds
\begin{enumerate}
\item[{\rm (a)}] for $(\nu_d)^{k-1}$-almost all $(\rho_2,\dots, \rho_k)\in O(d)^{k-1}$, if $K_2,\dots ,K_k$ are replaced by $\rho_2K_2,\dots ,\rho_kK_k$;
\item[{\rm (b)}] if all but one of the convex bodies $K_i$ have a support function of class $C^{1,1}$.
\item[{\rm (c)}] if $K_1,\dots ,K_k$ are
convex polytopes in general $(r_1,\dots ,r_k)$-position;
\end{enumerate}
\end{Theorem}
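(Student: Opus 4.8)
The plan is to follow the proof of Theorem \ref{mainthm} almost verbatim, with the ``diagonal-complement'' quantity $\|\ul u|L^\perp\|$ systematically replaced by the ``diagonal'' quantity $\|\ul u|L\|$. I take $\psi_r$ as defined just before the theorem. By the curvature representation \eqref{mixfunc} and monotone convergence (the integrand there is nonnegative and $G_r^{(\ve)}\nearrow G_r$ as $\ve\searrow0$) one has $V_r^{(\ve)}(K_1,\dots,K_k)\nearrow V_r(K_1,\dots,K_k)$, while the computation preceding the theorem identifies the $\ve$-truncated flag integral with $V_r^{(\ve)}(K_1,\dots,K_k)$. Hence it suffices to pass to the limit $\ve\searrow0$ on the flag side by dominated convergence, and the whole problem reduces to producing an integrable majorant, with respect to $\Omega_{r_1}(K_1;\cdot)\cdots\Omega_{r_k}(K_k;\cdot)$, for $G:=G_r(u_1,\dots,u_k)\,|\psi_r(u_1,U_1,\dots,u_k,U_k)|$. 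Since the size of $\psi_r$ will not depend on the $U_i$, and each $\Omega_{r_i}$ projects onto a constant multiple of the area measure $S_{r_i}(K_i;\cdot)$ under $(u,U)\mapsto u$, this majorant only has to be integrated against $\prod_i S_{r_i}(K_i;\cdot)$.

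The first ingredient is the analogue of Lemma \ref{upperbound}: there is a constant $c$ with $|\psi_r(u_1,U_1,\dots,u_k,U_k)|\le c\,\|u_1\wedge\cdots\wedge u_k\|^2$. Indeed $\psi_r$ is a fixed linear combination of the squared norms $\|V^1_{I_1}\wedge u_1\wedge\cdots\wedge V^k_{I_k}\wedge u_k\|^2$ built in Lemma \ref{Lem2}, each a $(d-j)$-vector because $\sum_i(d-1-r_i)+k=d-j$; reordering the factors (which only changes a sign) into $(V^1_{I_1}\wedge\cdots\wedge V^k_{I_k})\wedge(u_1\wedge\cdots\wedge u_k)$ and using submultiplicativity of the exterior norm together with $\|V^1_{I_1}\wedge\cdots\wedge V^k_{I_k}\|\le1$ gives the claim. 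Combining this with $\prod_i t_i^{d-1-r_i}\le1$ (valid since $r_i\le d-1$) and the elementary identity $\|\sum_i t_iu_i\|=\sqrt k\,\|\ul{tu}|L\|$ yields $G\le c'\,\|u_1\wedge\cdots\wedge u_k\|^2\int_{S^{k-1}_+}\|\ul{tu}|L\|^{-(d-j)}\,\Ha^{k-1}(dt)$. This is the exact dual of \eqref{uppbound}: the singular locus now lies on the diagonal $L$ rather than on $L^\perp$, and crucially, since $u_i=v_i/\|v_i\|$ converts $\|u_1\wedge\cdots\wedge u_k\|$ and $\|\ul{tu}|L\|$ into the $\ul v$-variables without any approximation, no splitting of $S^{k-1}_+$ into $S^{k-1}_\ast$ and its complement will be required here.

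It then remains to verify in each of the cases (a)--(c) that $\int\|u_1\wedge\cdots\wedge u_k\|^2\,G_r\,\prod_i S_{r_i}(K_i;du_i)$ is finite. In case (a) the random rotations turn each $S_{r_i}$ into a rotation-invariant finite measure, and since the inner integral over $u_2,\dots,u_k$ is independent of $u_1$, matters reduce, exactly as for \eqref{finite3}, to the finiteness of the corresponding all-$\Ha^{d-1}$ integral. Applying the coarea formula to $(t,\ul u)\mapsto\ul{tu}$ (a.e.\ bijection onto $S^{kd-1}$ with inverse $t_i=\|v_i\|$, $u_i=v_i/\|v_i\|$, whose Jacobian is a multiple of $\prod_i\|v_i\|^{-(d-1)}$) transfers the integrand to a constant multiple of $\|v_1\wedge\cdots\wedge v_k\|^2\prod_i\|v_i\|^{-(d+1)}\,\|\ul v|L\|^{-(d-j)}$ on $S^{kd-1}$. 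Finiteness follows from Lemma \ref{fact} applied to the $d$-dimensional subspace $L$: near $\{\ul v|L=0\}$ (where the $\|v_i\|$ are bounded below) the quadratic estimate $\|v_1\wedge\cdots\wedge v_k\|^2\le c\,\|\ul v|L\|^2$ improves the exponent to $2-(d-j)>-d$, whereas near $\{v_i=0\}$ the crude bound $\|v_1\wedge\cdots\wedge v_k\|^2\le\prod_i\|v_i\|^2$ leaves the integrable singularity $\|v_i\|^{-(d-1)}$, integrable since $d-1<d$. Case (b) reduces to (a), because a support function of class $C^{1,1}$ forces $S_{r_i}(K_i;\cdot)\le c_i\Ha^{d-1}$. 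Case (c) is the simplest: for polytopes in general $(r_1,\dots,r_k)$-position the defining condition $0\notin\operatorname{conv}\{u_1,\dots,u_k\}$ holds on the compact product of the normal-cone slices $n(K_i,F_i)$ carrying the measures, so $\|\sum_i t_iu_i\|\ge d(0,\operatorname{conv}\{u_i\})$ is bounded below, $G_r$ is bounded, and the finitely many face contributions are trivially finite.

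The main obstacle is the last integrability step in case (a). Unlike in Theorem \ref{mainthm}, there is no useful lower bound for $\|\ul{tu}|L\|$ on $S^{k-1}_+\setminus S^{k-1}_\ast$ (one may have $\sum_i t_iu_i=0$ there), so Lemma \ref{neulem} has no direct counterpart and the two singular loci $\{v_i=0\}$ and $\{\ul v|L=0\}$ on $S^{kd-1}$ must be disentangled by a regional decomposition. The factor $\|u_1\wedge\cdots\wedge u_k\|^2=\|v_1\wedge\cdots\wedge v_k\|^2/\prod_i\|v_i\|^2$ is precisely what makes this possible, and the technical heart is the quantitative comparison $\|v_1\wedge\cdots\wedge v_k\|\le c\,\|\ul v|L\|$, valid where the $\|v_i\|$ are bounded below, which supplies the extra two powers needed to beat the singular exponent $-(d-j)$; establishing this estimate and controlling the behaviour where the two loci meet is where the real work lies.
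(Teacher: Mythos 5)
Your overall architecture matches the paper's proof: approximation $V^{(\ve)}_{r}\nearrow V_{r}$ via \eqref{mixfunc} and monotone convergence, identification of the truncated flag integral with $V^{(\ve)}_{r}$ from the computation preceding the theorem, the bound $|\psi_r(u_1,U_1,\dots,u_k,U_k)|\le c\,\|u_1\wedge\cdots\wedge u_k\|^2$ (this is exactly Lemma \ref{upperbound2}, and your proof of it — a finite sum of terms $\|V^1_{I_1}\wedge u_1\wedge\cdots\wedge V^k_{I_k}\wedge u_k\|^2$, each at most $\|u_1\wedge\cdots\wedge u_k\|^2$ — is the paper's), case (b) via $S_{r_i}(K_i,\cdot)\le c_i\Ha^{d-1}$, and case (c) via compactness of the normal-cone slices, verbatim as in the paper. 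Where you genuinely diverge is the decisive integrability step in cases (a)/(b): the paper does \emph{not} rerun the coarea analysis of Theorem \ref{mainthm}; it bounds $J=\|u_1\wedge\cdots\wedge u_k\|^2\int_{S^{k-1}_+}\|\sum_i t_iu_i\|^{-(d-j)}\,\Ha^{k-1}(dt)$ by quoting \cite[Lemma 3, (13)]{HR} and then argues ``as in the proof of \cite[Proposition 1]{HR}''. Your self-contained substitute (transfer to $S^{kd-1}$ by the a.e.\ bijection $(t,\ul{u})\mapsto \ul{tu}$ with Jacobian $\prod_i t_i^{d-1}$, correct as you state it) is a legitimate alternative route, but as written it has a genuine gap, which you yourself half-acknowledge.

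The gap is this: your regional decomposition rests on the parenthetical claim that near $\{\ul{v}|L=0\}$ the norms $\|v_i\|$ are bounded below. That is true only for $k=2$; for $k\ge 3$ the loci intersect (e.g.\ $v_1=0$, $v_2=-v_3$), and in your region where some $\|v_i\|<\delta$ the crude bound $\|v_1\wedge\cdots\wedge v_k\|^2\le\prod_i\|v_i\|^2$ leaves the factor $\|\ul{v}|L\|^{-(d-j)}$ completely untreated. Since the theorem allows $j=0$ (only $r_1+\cdots+r_k\ge(k-1)d$ is assumed), that exponent can equal the codimension $d$ of $L^\perp\cap S^{kd-1}$, and by the very Lemma \ref{fact} you invoke the resulting majorant is \emph{not} integrable near the overlap of the two loci — precisely the point you defer as ``the real work''. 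The repair is in fact easier than your sketch suggests, and does not need any lower bound on the $\|v_i\|$: since $v_{i}\wedge v_2\wedge\cdots\wedge v_k=0$ for $i\ge 2$, one has the identity $v_1\wedge\cdots\wedge v_k=v_1\wedge\cdots\wedge\bigl(\sum_i v_i\bigr)\wedge\cdots\wedge v_k$ with the sum placed in any one slot $i_0$; choosing $i_0$ with $\|v_{i_0}\|\ge 1/\sqrt{k}$ (always possible on $S^{kd-1}$) gives the global bound $\|v_1\wedge\cdots\wedge v_k\|\le\sqrt{k}\,\|\ul{v}|L\|\prod_{i\neq i_0}\|v_i\|$, so your transferred integrand is dominated by a constant times $\|\ul{v}|L\|^{2-(d-j)}\prod_{i\neq i_0}\|v_i\|^{-(d-1)}$, with every exponent strictly subcritical for its codimension-$d$ locus. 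Even then, Lemma \ref{fact} alone does not cover a product of singular powers concentrated on intersecting loci; you still need the observation that the subspaces $\{v_i=0\}$, $i\in S$, and $L^\perp$ are in general position whenever $S\neq\{1,\dots,k\}$ (their codimensions add), so a local product/Fubini argument yields finiteness. With these two additions your case (a) closes and constitutes a genuinely self-contained alternative to the paper's citation of \cite{HR}; without them, the proof is incomplete exactly where the singular analysis is hardest.
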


\begin{proof}
For $\ve\searrow 0$, equation \eqref{mixfunc} implies that
$$
V^{(\varepsilon)}_{r}(K_1,\ldots,K_k)\nearrow  V_{r}(K_1,\ldots,K_k)
$$
for arbitrary convex bodies $K_1,\ldots,K_k\subset\R^d$.
As we have seen above,
\begin{align*}
&V^{(\varepsilon)}_{r}(K_1,\ldots,K_k) \\
&\quad =
\int_{F^\perp(d,d-1-r_1)}\cdots\int_{F^\perp(d,d-1-r_k)}G^{(\varepsilon)}_r(u_1,\dots, u_k)
\psi_r(u_1,U_1,\ldots,u_k,U_k)\\
&\quad\quad\times \Omega_{r_k}(K_k;d(u_k,U_k))\cdots \Omega_{r_1}(K_1;d(u_1,U_1)) .
\end{align*}
Thus,  we have to show that
\begin{align*}
\lim_{\ve\to 0} &\int_{F^\perp(d,d-1-r_1)}\cdots\int_{F^\perp(d,d-1-r_k)}G^{(\varepsilon)}_r(u_1,\dots, u_k) \
\psi_r(u_1,U_1,\ldots,u_k,U_k)\\
&\quad\times \Omega_{n_k}(K_k;d(u_k,U_k))\cdots \Omega_{n_1}(K_1;d(u_1,U_1))\\
= &\int_{F^\perp(d,d-1-r_1)}\cdots\int_{F^\perp(d,d-1-r_k)}G_r(u_1,\dots, u_k) \psi_r(u_1,U_1,\ldots,u_k,U_k)\\
&\quad\times \Omega_{n_k}(K_k;d(u_k,U_k))\cdots \Omega_{n_1}(K_1;d(u_1,U_1)),
\end{align*}
in each of the three cases listed in the theorem. As in the case of Theorem \ref{mainthm}, we have to discuss the integrability of suitable upper bounds for
$$G_r(u_1,\dots, u_k) |\psi_r(u_1,U_1,\ldots,u_k,U_k)|.$$

Recall that $G_r^{(\ep)}\nearrow G_r$ as $\ep\searrow 0$. For $\psi_r$ we use the following lemma.

\begin{Lemma}\label{upperbound2} There is a constant $c\ge 0$ such that
$$
|\psi_r(u_1,U_1,\dots ,u_k,U_k)|\le c\, \|u_1\wedge\cdots\wedge u_k\|^2
$$
for all $(u_i,U_i)\in F^\perp(d,d-1-n_i), i=1,\dots ,k$.
\end{Lemma}

This follows from the definition of $\Psi_{u_1,\dots ,u_k }^{p_1,\dots ,p_k}$ as a finite sum of expressions of the
form $\| V_{I_1}^1\wedge u_1\wedge \cdots \wedge V_{I_k}^k\wedge u_k\|^2$, each of which is
bounded from above by $\|u_1\wedge\cdots\wedge u_k\|^2$.

Recall that $j=r_1+\cdots+r_k-(k-1)d\le k(d-1)-(k-1)d=d-k$. Concerning the upper estimate for
$$
J:=\|  u_1\wedge \cdots \wedge u_k\|^2\int_{S^{k-1}_+} \|\sum_{i=1}^k t_iu_i\|^{-(d-j)} \,\Ha^{k-1}(dt)
$$
in the cases (a) and (b), we first use the upper bound from \cite[Lemma 3, (13)]{HR} to see that $J\le \text{const}$ if
$j=d-k$ and $J\le \text{const} \|u_1\wedge\cdots\wedge u_k\|^{-(d-k)}$ if $j<d-k$. In the latter case, we can then
argue as in the proof of \cite[Proposition 1]{HR}.

For case (c), assume that $K_1,\ldots,K_k$ are in general $(r_1,\dots, r_k)$-position. By a compactness and continuity
argument  this means  that there is a positive constant $\varepsilon_0>0$ such that
$$
 \left\|\sum_{i=1}^k s_iu_i\right\|\ge\ve_0>0
$$
holds for all $s=(s_1,\dots, s_k)\in [0,1]^k$ with $s_1+\cdots+s_k=1$,
$u_i\in n(K_i,F_i)$ and  $F_i\in \mathcal{F}_{r_i}(K_i)$, for $i=1,\dots ,k$. This again holds if and only
if there is a positive constant $\varepsilon_1>0$ such that
$$\left\|\sum_{i=1}^k t_iu_i\right\|\ge\ve_1>0$$
holds for all $t=(t_1,\dots, t_k)\in S^{k-1}_+$,
$u_i\in n(K_i,F_i)$ and  $F_i\in \mathcal{F}_{r_i}(K_i)$, for $i=1,\dots ,k$. The latter clearly
 guarantees the integrability.
\end{proof}

\noindent{\bf Remark.} If $K_1,\ldots,K_k$ are polytopes with nonempty interiors, then
\begin{equation}\label{condb}
0\notin\text{conv}\left(n(K_i,F_i)\right)
\end{equation}
whenever $F_i\in \mathcal{F}_{r_i}(K_i)$ for $i\in\{1,\dots ,k\}$.
Assuming \eqref{condb}, if follows that \eqref{conda} is equivalent to requiring that
\begin{equation}\label{condc}
0\notin\text{conv}\left(\bigcup_{i=1}^k n(K_i,F_i)\right)
\end{equation}
whenever $F_i\in \mathcal{F}_{r_i}(K_i)$ for $i=1,\dots ,k$.

\section{Mixed curvature measures}\label{7}

To derive a flag representation for the mixed curvature measures of translative integral geometry, our starting point is a curvature representation of the mixed curvature measures (see \cite[Theorem 2]{HR}), which states that
\begin{eqnarray*}
&&C_{r}(K_1,\dots,K_k; A)\nonumber\\
&&\quad =\int_{\nor (K_1)\times\dots\times\nor (K_k)}G_{r}(\underline{(x,u)};A)\sum_{|I_1|=r_1,\dots,|I_k|=r_k}
\left(\prod_{i=1}^k\bK_{I_i^c}(K_i;x_i,u_i)\right)\\
&&\quad\quad\times\left[ A_{I_1}(K_1;x_1,u_1) ,\ldots,A_{I_k}(K_k;x_k,u_k)
\right]^2\, \Ha^{k(d-1)}(d(x_1,u_1,\dots,x_k,u_k)),\nonumber
\end{eqnarray*}
where $r=(r_1,\dots ,r_k)$, $ A \subset\R^{kd}\times S^{d-1}$ is a Borel set,
\begin{align*}
G_{r}(\underline{(x,u)};A)  &:=\frac{1}{\omega_{d-j}}\int_{S^{k-1}_+}
\mathbf{1}_A(\underline{x},\underline{u}(t))
\left(\prod_{i=1}^kt_i^{d-1-r_i}\right)\left\|\tilde{u}(t)\right\|^{-(d-j)}\Ha^{k-1}(dt)
\end{align*}
for linearly independent $u_1,\dots ,u_k$ (and $G_{r}(\underline{(x,u)};A) =0$ otherwise), for $j:=r_1+\cdots +r_k-(k-1)d$, and where
$$
\tilde{u}(t):=\sum_{i=1}^kt_iu_i\qquad\text{and}\qquad \underline{u}(t):=\frac{\tilde{u}(t)}{\|\tilde{u}(t)\|}.
$$
As before, for $\ve>0$ we  introduce the bounded $\ve$-approximation
\begin{align*}
&C_{r}^{(\varepsilon)}(K_1,\dots,K_k; A)\\
&\quad :=\int_{\nor (K_1)\times\dots\times\nor (K_k)}G_{r}^{(\ve)}(\underline{(x,u)};A)\sum_{|I_1|=r_1,\dots,|I_k|=r_k}\left(\prod_{i=1}^k\bK_{I_i^c}(K_i;x_i,u_i)\right)\\
&\quad\quad\times\left[ A_{I_1}(K_1;x_1,u_1) ,\ldots,A_{I_k}(K_k;x_k,u_k)
\right]^2\, \Ha^{k(d-1)}(d(x_1,u_1,\dots,x_k,u_k)),
\end{align*}
where now
$$
G_{r}^{(\ve)}(\underline{(x,u)};A) := G_{r}(\underline{(x,u)};A) {\bf 1}\{ d(0,{\rm conv} \{u_1,\ldots,u_k\})\ge \ve\} .
$$
Clearly,  $G^{(\varepsilon)}_r$ is nonnegative and  bounded from above by $\omega_{d-j}^{-1}\ve^{-(d-j)}\omega_k$,
independent of $(\underline{(x,u)};A)$.

For the flag representations of mixed curvature measures, we need an extension of the
flag measures $\Omega_k(K;\cdot)$  which we briefly recall.
 In the following, we consider the {\it flag manifold} $\Forth_*(d,k):= \R^d \times \Forth(d,k)$.
For a convex body $K\subset\rd$ and $k\in\{0,\dots ,d-1\}$, the $k$th {\it flag measure} $\Gamma_k(K;\cdot)$ of $K$  is a measure on $\Forth_*(d,d-1-k)$ defined by
\begin{align*}
&\int g(x,u,V)\, \Gamma_k(K;d(x,u,V))\\
&\qquad =\gamma(d,k)\int_{\nor (K)}\sum_{|I|=
\bark}\BK_I(K;x,u)\int_{G^{u^\perp}(d-1,d-1-k)} g(x,u,V)
\\
&\qquad \qquad\times\, \langle V,A_I(K;x,u)\rangle^2\,
\nu^{d-1}_{d-1-k}(dV)\,{\cal H}^{d-1}(d(x,u)).
\end{align*}

As in the preceding section, we put
$$
\psi_r(u_1,U_1,\ldots,u_k,U_k):=\tilde c(d,r)^{-1}\Psi_{u_1,\dots ,u_k }(U_1,\dots ,U_k)
$$
and $\tilde c(d,r):= \gamma (d,r_1)\cdots \gamma (d,r_k)$.

Repeating the reasoning of the preceding section, we obtain
\begin{align*}
&C_{r}^{(\varepsilon)}(K_1,\dots,K_k; A)\\
&\qquad =\int_{F_*^\perp(d,d-1-r_k)}\cdots\int_{F_*^\perp(d,d-1-r_1)}G_{r}^{(\ve)}(\underline{(x,u)};A)
\psi_r(u_1,U_1,\ldots,u_k,U_k)\\
&\qquad\qquad \times \Gamma_{r_1}(K_1;d(x_1,u_1,U_1))\cdots \Gamma_{r_k}(K_k;d(x_k,u_k,U_k)) ,
\end{align*}
where  $ A \subset\R^{kd}\times S^{d-1}$ is a Borel set.

We also obtain as an immediate consequence the following result.

\begin{Theorem}\label{mainthm2b}
Let $K_1, \dots ,K_k\subset\rd$  be convex bodies in $\rd$,  let
$r=(r_1,\dots ,r_k) \in\{1,\ldots,d-1\}^k$ with $r_1+\dots +r_k\ge (k-1)d$,
and let  $ A \subset\R^{kd}\times S^{d-1}$ be a Borel set. Then, there is a continuous function $\psi_{r}$ on $F^\perp(d,d-1-r_1)\times\cdots\times F^\perp(d,{d-1-r_k})$ (independent of $K_1,\dots ,K_k$) such that
\begin{align*}
&C_{r}(K_1,\dots,K_k; A)\ = \int_{F_*^\perp(d,{d-1-r_k})}\cdots\int_{F_*^\perp(d,d-1-r_1)} G_{r} (\underline{(x,u)};A)\nonumber \\
&\quad \times \psi_r(u_1,U_1,\dots, u_k,U_k)\, \Gamma_{r_1}(K_1;d(x_1,u_1,U_1))\cdots \Gamma_{r_k}(K_k;d(x_k,u_k,U_k))
\end{align*}
holds under any of the conditions (a) -- (c) in Theorem \ref{mainthm2}.
\end{Theorem}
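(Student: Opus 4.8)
The plan is to read the statement off the flag representation of the $\varepsilon$-approximation $C^{(\varepsilon)}_{r}(K_1,\dots,K_k;A)$ that was just established, by letting $\varepsilon\searrow 0$ and combining monotone convergence on the curvature side with dominated convergence on the flag side. On the curvature side I would first observe that $G^{(\varepsilon)}_{r}(\underline{(x,u)};A)\nearrow G_{r}(\underline{(x,u)};A)$ pointwise as $\varepsilon\searrow 0$: the factor $\mathbf{1}\{d(0,\mathrm{conv}\{u_1,\dots,u_k\})\ge\varepsilon\}$ increases to $1$ exactly on the set where $u_1,\dots,u_k$ are linearly independent, and this is precisely the set on which $G_{r}(\underline{(x,u)};A)$ is supported. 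Applying the monotone convergence theorem to the curvature representation of $C_{r}(K_1,\dots,K_k;A)$ then gives $C^{(\varepsilon)}_{r}(K_1,\dots,K_k;A)\nearrow C_{r}(K_1,\dots,K_k;A)$.

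On the flag side I would pass to the limit in the identity for $C^{(\varepsilon)}_{r}(K_1,\dots,K_k;A)$ derived immediately above, in which the integration is against $\Gamma_{r_1}(K_1;\cdot)\cdots\Gamma_{r_k}(K_k;\cdot)$. The integrand converges pointwise to the one obtained by replacing $G^{(\varepsilon)}_{r}$ with $G_{r}$. Since $\mathbf{1}_A\le 1$ forces $G_{r}(\underline{(x,u)};A)\le G_{r}(u_1,\dots,u_k)$, where $G_{r}(u_1,\dots,u_k)$ is the function from Section \ref{6}, the integrand is dominated in absolute value by $G_{r}(u_1,\dots,u_k)\,|\psi_r(u_1,U_1,\dots,u_k,U_k)|$, which is exactly the dominating function already analysed in the proof of Theorem \ref{mainthm2}.

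The decisive point is then the integrability of this majorant against the \emph{extended} flag measures $\Gamma_{r_i}$. By Lemma \ref{upperbound2} the bound is at most $c\,\|u_1\wedge\cdots\wedge u_k\|^2\,G_{r}(u_1,\dots,u_k)$, which depends only on the directions $u_1,\dots,u_k$, and in particular neither on the subspaces $U_i$ nor on the spatial variables $x_i$. Because the marginal of $\Gamma_{r_i}(K_i;\cdot)$ on the $u$-coordinate is a constant multiple of the area measure $S_{r_i}(K_i,\cdot)$, integrating this majorant against $\Gamma_{r_1}(K_1;\cdot)\cdots\Gamma_{r_k}(K_k;\cdot)$ reduces to integrating it against $S_{r_1}(K_1,\cdot)\cdots S_{r_k}(K_k,\cdot)$, and the latter integral is finite under each of the hypotheses (a)--(c) by precisely the estimates carried out in the proof of Theorem \ref{mainthm2}. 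Dominated convergence then yields the limit of the right-hand side, and matching it with the limit from the first step proves the theorem. The main (and essentially only) obstacle is supplying this $\varepsilon$-uniform integrable majorant on the extended flag manifolds; since it depends on the flags only through their direction components, the integrability collapses to the one already solved for the mixed functionals, so no new relative-position analysis is required and the result is indeed immediate.
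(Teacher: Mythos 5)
Your proposal is correct and follows essentially the same route as the paper, which derives the flag identity for $C^{(\varepsilon)}_{r}$ and then treats the theorem as an immediate consequence via exactly your limit passage: monotone convergence on the curvature side and dominated convergence on the flag side, with the majorant $G_{r}(u_1,\dots,u_k)\,|\psi_r(u_1,U_1,\dots,u_k,U_k)|\le c\,\|u_1\wedge\cdots\wedge u_k\|^2\,G_{r}(u_1,\dots,u_k)$ depending only on the direction components, so that integrability under (a)--(c) reduces, through the $u$-marginals of the $\Gamma_{r_i}$, to the estimates already carried out for Theorem \ref{mainthm2}. One cosmetic slip only: the indicator $\mathbf{1}\{d(0,\mathrm{conv}\{u_1,\dots,u_k\})\ge\varepsilon\}$ increases to $1$ on the larger set $\{0\notin\mathrm{conv}\{u_1,\dots,u_k\}\}$, not exactly on the set of linearly independent tuples, but since $G_{r}(\underline{(x,u)};A)$ vanishes on dependent tuples by definition, your conclusion $G^{(\varepsilon)}_{r}\nearrow G_{r}$ pointwise is unaffected.
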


\end{document}